\newtheorem{theorem}{Theorem}[section]     
\newtheorem{proposition}[theorem]{Proposition} 
\newtheorem{lemma}[theorem]{Lemma} 
\newtheorem{corollary}[theorem]{Corollary}
\theoremstyle{definition}
\newtheorem{definition}[theorem]{Definition}
\theoremstyle{remark}
\newtheorem{remark}[theorem]{Remark}
\newtheorem{example}[theorem]{Example}
\newcommand{\mbP}{\mathbb P}
\newcommand{\mbA}{\mathbb A}
\newcommand{\mbZ}{\mathbb Z}
\newcommand{\mbQ}{\mathbb Q}
\newcommand{\mbC}{\mathbb C}
\newcommand{\mbR}{\mathbb R}
\newcommand{\Bl}{\mathrm{Bl}}
\newcommand{\prim}{\mathrm{prim}}
\newcommand{\ev}{\mathrm{ev}}
\newcommand{\vir}{\mathrm{vir}}
\newcommand{\virdim}{\mathrm{virdim}}
\newcommand{\Coef}{\mathrm{Coef}}
\newcommand{\Hcal}{\mathcal{H}}
\newcommand{\Zcal}{\mathcal{Z}}
\newcommand{\Div}{\mathrm{div}}
\newcommand{\Spec}{\mathrm{Spec}}
\newcommand{\im}{\mathrm{im}}
\newcommand{\Aut}{\mathrm{Aut}}
\newcommand{\Ext}{\mathrm{Ext}}
\newcommand{\Hom}{\mathrm{Hom}}
\title[Gromov-Witten theory of even dimensional intersection of two quadrics]{Genus~0 Gromov-Witten theory of even dimensional complete intersections of two quadrics: the final step}
\author{Danil Gubarevich}
\address[D. Gubarevich]{Université de Versailles St-Quentin, 45 Avenue des Etats Unis, 78000 Versailles, France}
\email{danil.gubarevich@uvsq.fr}
\numberwithin{equation}{section}
\begin{document}

\begin{abstract}
Even dimensional complete intersections $X$ of two quadrics in $\mbC\mbP^{m+2}$ are exceptional from the point of view of the Gromov-Witten theory: they are (together with qubic surfaces) the only complete intersections whose Gromov-Witten theory is not invariant under the full orthogonal or symplectic group acting on the primitive cohomology. The genus~0 Gromov-Witten theory of $X$ was studied by Xiaowen Hu. He used geometric arguments and the WDVV equation to compute all genus~0 correlators except one, which cannot be determined by his methods. In this paper we compute the remaining Gromov-Witten invariant of $X$ using Jun Li's degeneration formula.
\end{abstract}

\date{\today}

\maketitle

\tableofcontents

\section{Introduction}

Let $X$ be a smooth projective variety over $\mbC$. Let $\beta\in H_2(X,\mbZ)$. The relevant moduli space in Gromov-Witten theory is the moduli space $\overline{\mathcal{M}}_{g,n}(X,\beta)$ of stable $n$-pointed genus $g$ maps. On points, it parametrizes the data $$[(C,p_1,\dots,p_n)\xrightarrow{f} X],$$ consisting of a morphism $f$, with finitely many automorphims, from a curve of arithmetic genus $g$ , allowed to have at most nodal singularities, and with $n$ distinct smooth marked points $p_1,\dots,p_n$ such that $f_*[C]=\beta$. It is a proper Deligne-Mumford stack equipped with a canonical perfect obstruction theory, giving rise to its virtual fundamental class $[\overline{\mathcal{M}}_{g,n}(X,\beta)]^\vir$ of expected dimension $$(3-\dim X)(1-g)+\langle\beta,c_1(T_X)\rangle +n.$$ This moduli space is equipped with $n$ evaluation morphisms $\ev_1,\dots,\ev_n$ where $$\ev_i([(C,p_1,\dots,p_n)\xrightarrow{f} X])=f(p_i).$$Denote by $\psi_1,\dots,\psi_n$ the first Chern classes of cotangent line bundles over $\overline{\mathcal{M}}_{g,n}(X,\beta)$.

Given classes $\gamma_1,\dots,\gamma_n \in H^\star(X,\mbZ)$ and non-negative integers $d_1,\dots,d_n$, the  Gromov-Witten invariants of  $X$ are defined as the following intersection numbers

\begin{align*}
    \left\langle \prod_{i=1}^n \tau_{d_i}(\gamma_i)\right\rangle_{g,n,\beta}^X:= \prod_{i=1}^n \ev_i^{\star}(\gamma_i)\psi_i^{d_i} \cap \left[ \overline{\mathcal{M}}_{g,n}(X,\beta)\right]^{\vir}\in\mbQ,
\end{align*}
also called correlators. Geometrically, when the classes $\gamma_i$ are represented by algebraic cycles
and all $d_i=0$, this invariant can be interpreted as the virtual count of genus $g$ curves
in $X$ meeting the given cycles.

In a recent paper \cite{ABPZ}, Arg\"uz, Bousseau, Pandharipande, and Zvonkine
developed an algorithm for computing Gromov--Witten invariants of smooth complete intersections
in projective space. The essential step was the observation that the deformation invariance of Gromov-Witten invariants implies their invariance under the monodromy group action on the primitive cohomology of $X$. That allowed the authors to treat the Gromov-Witten invariants with primitive insertions as multi-linear forms invariant under the orthogonal or symplectic group, provided that the monodromy group is maximal \cite[Theorem 4.27]{ABPZ}. The case when $X$ is a smooth complete intersection of two quadrics is special, since in this case the monodromy group is finite and the proposed algorithm has to be modified.

The genus~0 Gromov-Witten theory of an even dimensional smooth complete intersection $X$ of two quadrics is studied in~\cite{Hu21}. The author determines the values of all 4-point correlators and shows \cite[Theorem 4.6]{Hu21} that together with the WDVV equation they determine all the other correlators except one:

\begin{align}\label{main_correlator}
 \left\langle\tau_0(e_1)\dots\tau_0(e_{m+3})\right\rangle_{0,m+3,\frac{m}{2}}^X, 
\end{align}
where $e_1, \dots, e_{m+3}$ is a basis of the primitive cohomology of $X$.

In the present paper we prove the following theorem.
\begin{theorem}\label{maintheorem}

Let $X$ be a smooth complete intersection of two quadrics in $\mbC\mbP^{m+2}$ of even dimension~$m \geq 4$. There exists an orthonormal basis $(e_1,\dots,e_{m+3})$ of the primitive cohomology $H^m_\prim(X,\mbZ)$ of~$X$ such that 
\begin{align*}
\left\langle\tau_0(e_1)\dots\tau_0(e_{m+3})\right\rangle_{0,m+3,\frac{m}{2}}^X = 0.
\end{align*}
\end{theorem}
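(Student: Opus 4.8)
The plan is to compute the correlator $\langle\tau_0(e_1)\dots\tau_0(e_{m+3})\rangle_{0,m+3,m/2}^X$ using Jun Li's degeneration formula, as the abstract announces. The natural degeneration to exploit is the one coming from the pencil structure: a complete intersection $X$ of two quadrics sits in a pencil of quadrics, and a generic member of the pencil containing $X$ degenerates in a controlled way. I would first choose a one-parameter degeneration of $X$ whose total space $\mathcal{X}\to \mathbb{A}^1$ has smooth generic fiber isomorphic to $X$ and a central fiber that is a normal crossings union $Y_1\cup_D Y_2$ of two smooth pieces glued along a smooth divisor $D$, where $Y_1$ and $Y_2$ are geometrically simpler — ideally a projective bundle, a blowup of $\mbP^{m+1}$, or a quadric, whose relative Gromov-Witten theory is either already known from \cite{Hu21}, \cite{ABPZ}, or computable directly. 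The key is that $X$ contains many quadrics through the base locus, so the degeneration should separate the two defining quadrics.

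Next I would apply the degeneration formula to express the left-hand side as a sum over bipartite graphs of products of relative Gromov-Witten invariants of the pairs $(Y_1,D)$ and $(Y_2,D)$, summed over the ways of splitting the curve class $\beta=\tfrac{m}{2}$ and distributing the $m+3$ primitive insertions and the relative cohomology-weighted nodes. The second step is bookkeeping: each $e_i$ lives in $H^m_\prim(X,\mbZ)$, and under specialization these classes restrict to the components $Y_1,Y_2$ and to $D$; I would need to track precisely which relative invariants of the degenerate pieces receive these primitive insertions, and which are forced to vanish on dimension or degree grounds. The orthonormal basis in the theorem statement gives freedom: I would exploit the $O$- or symplectic-symmetry of the primitive lattice to choose the $e_i$ so that the insertions pair up favorably across the two components, making most terms in the degeneration sum vanish by parity, by a vanishing of low-degree relative invariants, or by an odd-vs-even symmetry argument.

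The main obstacle, I expect, is the relative Gromov-Witten invariants with primitive relative insertions that survive the degeneration. Because the monodromy group here is finite rather than maximal, I cannot simply invoke the $O$/Sp-invariance of \cite[Theorem 4.27]{ABPZ}; instead the relative theory of $(Y_i,D)$ must carry genuinely new information not detected by WDVV, which is exactly why Hu's methods could not reach this correlator. So the crux is to identify a degeneration in which the single undetermined relative invariant is itself computable — either because the relevant piece $(Y_i,D)$ has maximal monodromy and falls under the \cite{ABPZ} framework, or because the invariant can be matched to a known enumerative count (lines or conics through the base locus) in low degree. Establishing that the chosen basis forces the final answer to be exactly $0$, rather than some nonzero rational number, is the delicate endpoint: I would aim to exhibit an explicit symmetry of the degenerate configuration — a deck transformation or an involution of the central fiber acting on the primitive cohomology with the right sign — under which the correlator is manifestly odd, hence zero.

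I anticipate that after assembling the degeneration sum, the remaining work is to verify that all contributing graphs either cancel in pairs under this involution or vanish individually, and to check the consistency of the result against the 4-point normalization and the WDVV-determined correlators from \cite[Theorem 4.6]{Hu21} as a sanity check on the chosen orthonormal basis.
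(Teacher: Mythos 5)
Your overall strategy --- degenerate $X$, apply Jun Li's formula, and argue that the surviving terms vanish --- matches the paper's, but the proposal is missing the two concrete ingredients that actually make the argument close, and the mechanism you lean on for the final vanishing is not the one that works. First, the degeneration: the paper does not separate ``the two defining quadrics'' into different components; it degenerates \emph{one} quadric into a product of two linear forms, $\{f_1 = tf_2+g_1g_2=0\}$, so the central fiber is a union of two quadric hypersurfaces $X_1=\{f_1=g_1=0\}$ and $X_2=\{f_1=g_2=0\}$ in $\mbP^{m+1}$ meeting along $D=\{f_1=g_1=g_2=0\}$ (after a blow-up of the singular total space along $Z$). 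The decisive point is that $X_1$ is a quadric, so $H^m_{\prim}(X_1,\mbZ)$ has rank one. A computation of the restriction map $H^m(\widetilde X,\mbC)\to H^m(X,\mbC)$ via the cohomology of the central fiber then produces an orthonormal basis $e_1,\dots,e_{m+3}$ such that the lifts $\widetilde e_i$ restrict to \emph{zero} on $X_1$ for all $i\neq m+2$. Hence in every term of the degeneration sum the $(X_1,D)$ factor carries at most one non-relative insertion ($n_1\in\{0,1\}$).

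Second, the vanishing: no symmetry, involution, parity cancellation, or matching with known enumerative counts is needed, and none of the relative invariants is ever computed. With $n_1\le 1$, the virtual dimension of $\overline{\mathcal M}^{\mu}_{0,n_1+l(\eta),\beta_1}(X_1,D)$ compared against the degrees of the insertions (each relative class $\delta_{j_k}$ has degree at most $2m-2$, and $\sum\mu_j=\langle\beta_1,D\rangle\ge l(\eta)$) forces $l(\eta)\le 3-m$ when $n_1=0$ and $l(\eta)\le 2-\tfrac m2$ when $n_1=1$, both impossible for $m\ge 4$. So every single term of the degeneration formula contains a vanishing $(X_1,D)$ relative invariant, and the correlator is zero. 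Your proposed route --- finding an involution of the central fiber under which the correlator is odd, or computing a surviving relative invariant via the ABPZ framework --- is speculative and, as written, would not produce a proof; you would need to exhibit the involution and verify its action on all graph types, whereas the dimension count above disposes of everything at once. The items to add to turn your plan into a proof are precisely: the explicit family $tf_2+g_1g_2$ with its blow-up, the restriction-map computation showing only one primitive class survives on $X_1$, and the dimension estimate on the $(X_1,D)$ side.
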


\subsection{Acknowledgements} \label{ackn}
I am grateful to Dmitry Zvonkine, for his suggestion to explore this problem, numerous discussions and patient guidance.

\section{The degeneration family}\label{smooth_family}

Throughout the paper we work over the ground field~$\mathbb{C}$. 

Let $X$ be a complete intersection of two quadric hypersurfaces in $\mbP^{m+2}$ given by zeros of two degree 2 polynomials
\begin{align*}
    X = \{ f_1 = f_2 = 0\}, 
\end{align*}
whose differentials are linearly independent at each point of $X$, so that $X$ is smooth.
Consider the family 
\begin{align*}
   \widehat{\pi}: \widehat{X} \to \mbA^1, 
\end{align*}
given by $\widehat{X} = \{f_1 = tf_2 + g_1g_2 = 0\}\subset \mbA^1 \times \mbP^{m+2}$, where $g_1, g_2$ are generically chosen degree~1 polynomials and $t$ is the coordinate on the target~$\mbA^1$.
The generic fiber is homeomorphic to $X$ and the special fiber $\widehat{X}_0$ is the union of two quadrics 
\begin{align*}
    X_1  &= \{t = f_1 = g_1 = 0\},\\
    X_2  &= \{t = f_1 = g_2 = 0\}
\end{align*}
in $\{0 \} \times \mbP^{m+2}$ intersecting transversally along a smooth divisor $D$
\begin{align*}
    D = \{t =  f_1 = g_1 =  g_2 = 0\}.
\end{align*}
The variety $\widehat{X}$ is not smooth: its singular locus is
\begin{align*}
    Z = \{ x\in \widehat{X} | \; df_1, f_2dt + tdf_2 + g_1dg_2 + g_2dg_1 \text{ are linearly dependent at } x \}.
\end{align*}
Below in Lemma (\ref{Zcutout}) we prove that $Z$ is given by
\begin{align*}
    Z = \{t=f_1 = f_2 = g_1 = g_2 = 0\}.
\end{align*}
We collect the degrees and dimensions of the above varieties in the table: 
\begin{center}
\begin{tabular}{ |c|c|c| } 
 \hline
 & degree & dim \\ 
  
 $X$ & 2,2 & m \\ 
 $X_1$ & 2,1 & m \\ 
 $X_2$ & 2,1 & m \\ 
 $D$ & 2,1,1 & m-1 \\ 
 $Z$ & 2,2,1,1 & m-2 \\ 
 \hline
\end{tabular}
\end{center}

Precomposing $\widehat{\pi}$ with a blow up map $\widetilde{\pi}: \widetilde{X}:=\Bl_{X_2}\widehat{X}\to \widehat{X}$, we obtain a proper family 
\begin{align*}
    p:= \widetilde{\pi} \circ \widehat{\pi} : \widetilde{X} \to \mbA^1.
\end{align*}

The full preimages of the components $X_1$, $X_2$ of the special fiber and of their intersection~$D$ under the blow up map are
\begin{align}\label{components}
    &\widetilde{\pi}^{-1}X_1 = X_1 \bigcup_Z \mbP(N_{X_2/Z}),\\
    &\widetilde{\pi}^{-1}D = D \bigcup_Z \mbP(N_{X_2/Z}),\\
    &\widetilde{X}_2 := \widetilde{\pi}^{-1}X_2 = \Bl_Z X_2,
\end{align}
where $N_{X_2/Z}$ is the normal line bundle to $Z$ in~$X_2$.

To apply Jun Li's degeneration formula, see Section (\ref{degeneration}), we need the total space of the family to be smooth and the special fiber to be a union of two smooth varieties intersecting transversally along a smooth subvariety. And, indeed, we have the following lemma.

\begin{lemma}\label{totalspace}
    The total space of the proper family 
    \begin{align*}
        p: \widetilde{X} \to \mbA^1
    \end{align*}
    is smooth with generic fiber $p^{-1}(t), t\neq0$ deformation equivalent to $X$ and the special fiber $\widetilde{X}_0:= p^{-1}(0)$ isomorphic to $X_1 \bigcup_D \widetilde{X}_2$, where the intersection along~$D$ is transversal.
\end{lemma}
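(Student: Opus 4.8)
The plan is to verify the three geometric conclusions of the lemma—smoothness of the total space, the structure of the special fiber, and transversality—essentially by a direct local analysis of the blow-up, using the explicit description of the singular locus $Z$ provided by Lemma (\ref{Zcutout}). Since $\widehat{X}$ is singular precisely along $Z = \{t = f_1 = f_2 = g_1 = g_2 = 0\}$, a codimension-$4$ locus lying inside $X_2$, the strategy is to show that blowing up the Cartier-obstructing component $X_2$ (equivalently, resolving along $Z$) removes exactly these singularities without introducing new ones. I would first record that away from $Z$ the map $\widetilde{\pi}$ is an isomorphism, so $\widetilde{X}$ is automatically smooth there and the generic fiber is unchanged, giving the deformation equivalence to $X$ for $t \neq 0$ immediately.

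The main work is local near $Z$. I would choose affine coordinates in which $t, f_1, f_2, g_1, g_2$ form part of a local coordinate system on $\mbA^1 \times \mbP^{m+2}$ (possible since their differentials are independent there, by smoothness of $X$ and genericity of $g_1, g_2$), and write the defining equations of $\widehat{X}$ as $f_1 = 0$ and $h := t f_2 + g_1 g_2 = 0$. The singular locus is cut out by the vanishing of the partials of $h$ together with $f_1, h$, which recovers $Z$. Blowing up the ideal of $X_2 = \{t = f_1 = g_2 = 0\}$ and passing to the standard affine charts of the blow-up, I would compute the proper transform of $\{h = 0\}$ in each chart and check, via the Jacobian criterion, that the total space $\widetilde{X}$ is smooth: the key point is that the exceptional divisor $\mbP(N_{X_2/Z})$ is a $\mbP^1$-bundle over $Z$ and the proper transform meets it cleanly. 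This is the step I expect to be the main obstacle, since it requires confirming that the particular blow-up center $X_2$—rather than the smaller center $Z$—resolves all of $Z$ and that no residual singularities survive in any chart.

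Granting smoothness, the description of the special fiber follows from the general fact that the total transform of $\widetilde{X}_0 = p^{-1}(0)$ decomposes according to the preimages listed in (\ref{components}). The proper transform of $X_1$ is $X_1$ itself (disjoint from the blow-up center away from $Z$, and meeting the exceptional divisor along the $\mbP^1$-bundle $\mbP(N_{X_2/Z})$), while $\widetilde{X}_2 = \Bl_Z X_2$ is the proper transform of $X_2$. I would then identify the special fiber $\widetilde{X}_0$ with $X_1 \bigcup_D \widetilde{X}_2$, checking that the new intersection divisor is a copy of $D$: since $D = \{t = f_1 = g_1 = g_2 = 0\}$ is disjoint from $Z$ in $X_1$ except along $Z$ itself, and the blow-up modifies $X_2$ only over $Z \subset D$, the two components continue to meet along $D$ (with $D$ unaffected on the $X_1$ side and its preimage in $\widetilde{X}_2$ identified back with $D$ via the birational contraction).

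Finally, transversality along $D$ and smoothness of both components I would establish in the same local coordinates: $X_1$ is smooth since it is a smooth quadric slice, and $\widetilde{X}_2 = \Bl_Z X_2$ is smooth because $Z$ is a smooth (codimension-$2$) center inside the smooth variety $X_2$, so blowing it up preserves smoothness. Transversality of $X_1$ and $\widetilde{X}_2$ along $D$ reduces to transversality of $X_1$ and $X_2$ along $D$ away from $Z$ (where the two quadrics meet transversally by construction, since their tangent spaces are distinct, the defining linear forms $g_1, g_2$ being generic) together with a local check over $Z$ that the blow-up separates the branches cleanly along the exceptional locus. I expect the off-$Z$ transversality to be routine from the explicit equations, and the behavior over $Z$ to follow from the same chart computations used for smoothness, completing all three claims of the lemma.
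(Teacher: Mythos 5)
Your proposal is correct and follows essentially the same route as the paper: reduce everything away from $Z$ to the isomorphism $\widetilde{X}\setminus\mbP(N_{X_2/Z})\to\widehat{X}\setminus Z$, and then verify smoothness (and the structure of the special fiber) over $Z$ by a Jacobian-criterion computation in the standard charts of the blow-up. The only cosmetic difference is that you propose local coordinates in which $t,f_1,f_2,g_1,g_2$ are independent, reducing to the standard small resolution of $\{tf_2+g_1g_2=0\}$, whereas the paper writes out the Rees algebra of the ideal $(t,g_2)$ on $\widehat{X}$ explicitly and checks the two charts $T\neq 0$ and $G_2\neq 0$ directly.
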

\begin{proof}
Because the degree~1 polynomials $g_1$ and $g_2$ are chosen generically, the varieties $X_1$ and $X_2$, cut out of $\mbP^{m+2}$
by the equations $f_1=g_1=0$ and $f_1=g_2=0$ respectively, are smooth and intersect transversally along $D: \{f_1 = g_1=g_2=0 \}$, which is also smooth. The blow-up along the smooth subvariety $Z: \{f_1=f_2=g_1=g_2\}$ preserves the smoothness and the transversality of the intersection.

Now let us check the smoothness of the total space $\widetilde{X}$. The morphism 
\begin{align}\label{smooth_loci}
        \widetilde{X}\backslash \mbP(N_ZX_2) \to \widehat{X}\backslash Z,
   \end{align}
induced from the blow-up map $\widetilde{\pi}: \widetilde{X} \to \widehat{X}$ is an isomorphism, being the blow-up map of a smooth variety $\widehat{X}\backslash Z$ along a smooth divisor $X_2 \backslash Z$. We are left to check the smoothness of $\widetilde{X}$ over $Z$, i.e. at any point $\widetilde{z} \in \widetilde{X}$ lying over a point $z \in Z \subset \widehat{X}$.
Denote by $[t_0: t_1 : \dots : t_{m+2}]$ the homogeneous coordinates of $\mbC\mbP^{m+2}$. Without loss of generality, we can assume that $t_0 \not=0$ at the point~$z$. We then let $t_0=1$ so that $(t_1, \dots t_{m+2})$ are  coordinates on an affine chart of $\mbC\mbP^{m+2}$ containing~$z$. 

Recall that the equations of $\widehat{X}$ are $f_1 = 0$ and $ tf_2 + g_1 g_2 = 0$, where $t$ is the coordinate on~$\mbA^1$. The Weil divisor $X_2$ is cut out of $\widehat{X}$ by two additional equations $t=0$, $g_2=0$. By abuse of notation, we still denote by $f_1, f_2, g_1, g_2$ the polynomials in variables $t_1, \dots, t_{m+2}$ obtained by plugging $t_0=1$ into the homogeneous polynomials. Then an affine chart of $\widehat{X}$ containing $p$ is the spectrum of the algebra
$$
A = \mbC[t_1, \dots, t_{m+2}, t] / (f_1, tf_2+g_1g_2).
$$
The ideal $I \subset A$ of $X_2$ is generated as an $A$-module by two elements $T$ and $G_2$ that correspond to the equations $t=0$ and $g_2=0$ of $X_2$ and satisfy the relations
$$
tG_2-Tg_2=0, \quad Tf_2+g_1G_2=0,
$$
obtained by replacing $t$ and $g_2$ by $T$ and $G_2$ in the relations of~$A$ in all possible ways linear in $T$ and~$G_2$. Thus we obtain a presentation of the algebra  $\bigoplus_k I^k$:
$$
 \bigoplus_k I^k = \mbC[t_1, \dots, t_{m+2}, t, T,G_2] / (tG_2-Tg_2, Tf_2+g_1G_2,f_1, tf_2+g_1g_2),
$$
where $f_1, f_2, g_1, g_2$ are polynomials in $t_1, \dots, t_{m+2}$ of degrees $2, 2, 1, 1$.

Now we consider two cases for $\widetilde{z}$.

{\bf 1. We have $T(\widetilde{z})  \not= 0$.} Then we can set $T=1$ to obtain an affine chart of $\widetilde{X}$ containing $\widetilde{z}$. Of the four relations defining $\bigoplus_k I^k$,
$$
tG_2-g_2, \quad f_2+g_1G_2, \quad f_1, \quad tf_2 + g_1g_2, 
$$
the last one becomes redundant as it follows from the first two. So we are left with three equations:
$$
f_1, \quad f_2 + g_1G_2, \quad tG_2-g_2
$$
in variables $t_1, \dots, t_{m+2}, t, G_2$. The matrix of partial derivatives reads
$$
\def\arraystretch{2.2}
\begin{array}{c||ccc}
& f_1& f_2 + g_1G_2& tG_2-g_2 \\
\hline
\hline
\frac{\partial}{\partial t_i} & \frac{\partial f_1}{\partial t_i} & \frac{\partial f_2}{\partial t_i} + G_2 \frac{\partial g_1}{\partial t_i}  & -\frac{\partial g_2}{\partial t_i} \\
\frac{\partial}{\partial G_2} & 0 & g_1 & t\\
\frac{\partial}{\partial t} & 0 & 0 & G_2\\
\end{array},
$$
where the first line actually denotes $m+2$ lines for $i$ from 1 to $m+2$. Now, by the smoothness of $X$ and the genericity assumption on $g_1$ and $g_2$, the locus $Z = \{f_1 = f_2 = g_1=g_2 = 0\}$ is itself smooth, in other words, the vectors $\partial f_1/\partial t_i$,  $\partial f_2/\partial t_i$, $\partial g_1/\partial t_i$, $\partial g_2/\partial t_i$ are linearly independent whenever $f_1 = f_2 = g_1=g_2 = 0$. Thus the columns of the matrix are linearly independent even without the last two rows, so that $\widetilde{X}$ is smooth at~$\widetilde{z}$ of dimension $m+4-3 = m+1$.

{\bf 2. We have $G_2(\widetilde{z})  \not= 0$.} Then we can set $G_2=1$ to obtain an affine chart of $\widetilde{X}$ containing $\widetilde{z}$. Of the four relations defining $\oplus_k I^k$,
$$
t-g_2T, \quad Tf_2+g_1, \quad f_1, \quad tf_2 + g_1g_2, 
$$
the last one becomes redundant as it follows from the first two. So we are left with three equations:
$$
f_1, \quad Tf_2 + g_1, \quad t-g_2T
$$
in variables $t_1, \dots, t_{m+2}, t, T$. The matrix of partial derivatives reads
$$
\def\arraystretch{2.2}
\begin{array}{c||ccc}
& f_1 & Tf_2 + g_1 & t-g_2T\\
\hline
\hline
\frac{\partial}{\partial t_i} & \frac{\partial f_1}{\partial t_i} & T  \frac{\partial f_2}{\partial t_i} + \frac{\partial g_1}{\partial t_i}  & -T \frac{\partial g_2}{\partial t_i} \\
\frac{\partial}{\partial T} & 0 & f_2& -g_2\\
\frac{\partial}{\partial t} & 0 & 0 & 1\\
\end{array},
$$
where the first line once again denotes $m+2$ lines for $i$ from 1 to $m+2$. Now, as in the previous case, the first two columns of the matrix are linearly independent even without the two last rows, because the vectors
$\partial f_1/\partial t_i$,  $\partial f_2/\partial t_i$, and $\partial g_1/\partial t_i$ are linearly independent whenever $f_1 = f_2 = g_1=g_2 = 0$. And the third column is linearly independent from the first two because it has a 1 in the last row. Thus the columns of the matrix are linearly independent, so that $\widetilde{X}$ is again smooth at~$\widetilde{z}$ of dimension $m+4-3 = m+1$.


\end{proof}

\begin{lemma}\label{Zcutout}
Let $f_1, \cdots, f_k$ be homogeneous polynomials in $m+k+1$ variables. Assume that $\{f_1 = \dots = f_{k-1} = 0 \}$ and $\{f_1 = \dots  = f_k = 0 \}$ are  smooth complete intersections in $\mbP^{m+k}$. Consider the family 
\begin{align*}
    \widehat{X} = \{f_1 = \cdots = f_{k-1} = tf_k + g_1g_2 = 0\} \subset \mbP^{m+k}\times \mbA^1,
\end{align*}
where $g_1$ and $g_2$ are generically chosen polynomials of degrees $d_k^1, d_k^2$ such that $d_k^1 + d_k^2 = \deg f_k$.  Then  the singular locus $Z$ of the restriction of $\widehat{X}$ to a sufficiently small neighborhood $U$ of $t=0$ in $\mbA^1$ is given by 
  \begin{align*}
    Z = \{f_1 = \cdots = f_k = t = g_1 = g_2 = 0\}.
\end{align*}
\end{lemma}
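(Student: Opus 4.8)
The plan is to compute the singular locus directly via the Jacobian criterion in affine charts of $\mbP^{m+k}\times\mbA^1$. Writing $F_1=f_1,\dots,F_{k-1}=f_{k-1}$ and $F_k=tf_k+g_1g_2$ for the $k$ equations cutting out $\widehat{X}$ (of expected dimension $m+1$), a point $(x,t)\in\widehat{X}$ is singular exactly when the $k\times(m+k+1)$ Jacobian matrix of $(F_1,\dots,F_k)$ with respect to the affine coordinates $x$ and the parameter $t$ has rank strictly less than $k$. The decisive structural feature is that $\partial F_i/\partial t=0$ for $i<k$ while $\partial F_k/\partial t=f_k$, so the last column of the Jacobian is $(0,\dots,0,f_k)^{\mathsf T}$; in the $x$-directions the first $k-1$ rows are the gradients $\nabla f_1,\dots,\nabla f_{k-1}$ and the last row is $t\,\nabla f_k+g_1\nabla g_2+g_2\nabla g_1$.

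First I would dispose of the locus $f_k\neq0$. There the last column is nonzero only in its last entry, so the final row is never a linear combination of the first $k-1$ rows, and the rank can drop below $k$ only if $\nabla f_1,\dots,\nabla f_{k-1}$ are already dependent. But every point of $\widehat{X}$ satisfies $f_1=\dots=f_{k-1}=0$, where these gradients are independent by smoothness of the complete intersection $\{f_1=\dots=f_{k-1}=0\}$. Hence no singular point has $f_k\neq0$, and on the singular locus $f_1=\dots=f_k=0$ holds; in particular the equation $tf_k+g_1g_2=0$ degenerates to $g_1g_2=0$, forcing $g_1=0$ or $g_2=0$.

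The heart of the argument is the remaining case $f_k=0$, where the $t$-column vanishes and singularity means the $k$ vectors $\nabla f_1,\dots,\nabla f_{k-1}$ and $v:=t\,\nabla f_k+g_1\nabla g_2+g_2\nabla g_1$ are linearly dependent, i.e. $v\in\mathrm{span}(\nabla f_1,\dots,\nabla f_{k-1})$. I would split according to which of $g_1,g_2$ vanishes. If, say, $g_1=0$ but $g_2\neq0$, then $v=t\,\nabla f_k+g_2\nabla g_1$, and the point lies on $\{f_1=\dots=f_k=g_1=0\}$, which is a smooth complete intersection for generic $g_1$ by Bertini; there $\nabla f_1,\dots,\nabla f_k,\nabla g_1$ are linearly independent, so the relation $t\,\nabla f_k+g_2\nabla g_1-\sum_{i<k}c_i\nabla f_i=0$ forces $t=0$ and $g_2=0$, contradicting $g_2\neq0$. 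The symmetric case $g_2=0,\ g_1\neq0$ is excluded the same way. This leaves $g_1=g_2=0$, where $v=t\,\nabla f_k$ and independence of $\nabla f_1,\dots,\nabla f_k$ on $\{f_1=\dots=f_k=0\}$ forces $t=0$. Thus $Z\subseteq\{f_1=\dots=f_k=t=g_1=g_2=0\}$. The reverse inclusion is immediate: at such a point $f_k=0$ kills the last column and $t=g_1=g_2=0$ kills the entire last row, so the Jacobian has rank at most $k-1$ and the point is singular.

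The step I expect to be the main obstacle is the case analysis under $f_k=0$: one must invoke the genericity of $g_1$ and $g_2$ precisely to guarantee that the auxiliary loci $\{f_1=\dots=f_k=g_j=0\}$ are smooth, hence that the enlarged gradient systems are linearly independent, which is exactly what converts the dependence relation into the simultaneous vanishing $t=g_1=g_2=0$. Restricting to a small neighbourhood $U$ of $t=0$ is only a convenience—the Jacobian computation is purely local and its conclusion is insensitive to the value of $t$—but it lets one disregard any behaviour of the family away from the special fiber, which is all that the degeneration formula requires.
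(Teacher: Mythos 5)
Your proof is correct, but it reaches the conclusion by a genuinely different route than the paper. Both arguments share the setup (the Jacobian with last column $(0,\dots,0,f_k)^{\mathsf T}$), the observation that $f_k\neq 0$ forces smoothness, and the easy reverse inclusion. They diverge at the key step. The paper reduces to fiberwise smoothness: it proves that a smooth point of a fiber is a smooth point of the total space, applies Bertini to the pencil $tf_k+g_1g_2$ to produce a punctured neighbourhood $U\setminus\{0\}$ over which all fibers are smooth, and then locates the singularities of the special fiber on the intersection of its two components; its genericity hypothesis is smoothness of $\{f_1=\dots=f_{k-1}=g_j=0\}$ for $j=1,2$. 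You instead stay with the total-space Jacobian throughout and analyse the dependence relation $t\,\nabla f_k+g_1\nabla g_2+g_2\nabla g_1\in\mathrm{span}(\nabla f_1,\dots,\nabla f_{k-1})$ by cases on $(g_1,g_2)$, invoking Bertini only for the fixed auxiliary loci $\{f_1=\dots=f_k=g_j=0\}$. Note that your genericity condition is therefore \emph{different} from the paper's (an extra equation $f_k=0$ in place of dropping it); both are generic, so either may be absorbed into ``generically chosen,'' but they are not interchangeable: with only the paper's hypotheses your case $g_1=0,\ g_2\neq 0,\ t\neq 0$ is not immediately excluded, which is precisely why the paper needs the neighbourhood $U$. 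Under your hypotheses you in fact prove the stronger statement that the singular locus of the whole family over all of $\mbA^1$ equals $Z$, so your remark that restricting to $U$ is only a convenience is justified --- but only relative to your choice of genericity, not the paper's. What each approach buys: the paper's is more geometric and makes visible that the nearby fibers are smooth (which the rest of the paper uses anyway), while yours is shorter, purely linear-algebraic, and dispenses with the construction of $U$.
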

\begin{proof}
    Denote by $t_0, \dots, t_{m+k}$ the homogeneous coordinates in $\mbP^{m+k}$ and let $F_k  = tf_k + g_1g_2$. The matrix of partial derivatives of $f_1, \dots, f_{k-1}, F_k$ with respect ot $t_0, \dots, t_{m+k}, t$ reads
$$
\begin{bmatrix}
\dfrac{\partial f_1}{\partial t_0} & \dots & \dfrac{\partial f_1}{\partial t_{m+k}} & 0 \\
\vdots &   & \vdots & \vdots \\
\dfrac{\partial f_{k-1}}{\partial t_0} & \dots & \dfrac{\partial f_{k-1}}{\partial t_{m+k}} & 0 \\
 \quad \dfrac{\partial F_k}{\partial t_0} \quad  & \quad \dots \quad & 
 \quad \dfrac{\partial F_k}{\partial t_{m+k}} \quad
 &\quad f_k \quad
\end{bmatrix},
$$
where 
$$
\frac{\partial F_k}{\partial t_i} = t \frac{\partial f_k}{\partial t_i} + g_1 \frac{\partial g_2}{\partial t_i} + g_2 \frac{\partial g_1}{\partial t_i}.
$$
A point $z \in \widehat{X}$ is smooth iff the evaluation of this matrix at $z$ has maximal rank (that is, rank~$k$). First note that if $f_k=t=g_1=g_2=0$ at $z$, the last row of the matrix vanishes, so that the rank of the matrix is at most $k-1$ and hence $z$ belongs to the singular locus as claimed in the statement of the lemma. Now we must prove that all the other points of $\widehat{X}$ over an appropriately chosen open set $U \subset \mbA^1$ are smooth.

First we specify the choice of the open set $U$ and the meaning of genericity of $g_1$ and $g_2$. The polynomials $g_1$ and $g_2$ must be chosen in such a way that $\{ f_0 = \dots = f_{k-1} = g_1=0 \}$ and $\{ f_0 = \dots = f_{k-1} = g_2=0 \}$ are smooth complete intersections. By Bertini's theorem, this is a generic choice. Similarly, the open set $U$ is the set of values $t$ for which $\{f_0 = \dots = f_{k-1} = F_k = 0 \}$ is a smooth complete intersection. Let us prove that this open set is nonempty.

Consider the set $S$ of polynomials $F$ of degree $d_k = \deg f_k$ in $t_0, \dots, t_{m+k}$ such that the variety $\{ f_1 = \dots = f_{k-1} = F = 0 \}$ is a smooth complete intersection. By Bertini's theorem, this set is open and dense in the space of all degree $d_k$ polynomials. The family $t f_k + g_1g_2$ extends to a map from $\mbP^1$ to the projectivization of the space of all degree $d_k$ polynomials. Moreover, the image of $t=\infty$ lies in the projectivization of $S$, because by assumption $X$ is a smooth complete intersection. If follows that all but a finite number of points of $\mbP^1$ have their images in the projectivization of $S$. The neighborhood $U$ in the lemma is obtained by removing all points  $t \in \mbA^1$ whose image is not in $S$, except the point $t=0$. In the restriction of the family $\widehat{X}$ to $U$, all fibers are smooth except the fiber $X_0$ over $t=0$. 

Statement 1: if $z$ is a singular point of $\widehat{X}$, then $f_k(z) = 0$. Proof: the block $\partial f_i/ \partial t_j$ has maximal rank $k-1$ at every point, since by assumption $\{f_1 = \dots = f_{k-1} = 0\}$ is a smooth complete intersection. If $f_k(z) \not=0$, the last row of the matrix is linearly independent from the other ones, so the matrix has rank~$k$.

Statement 2: if $z$ is a smooth point of the fiber $X_t$ of $\widehat{X}$ for $t= t(z)$ then $z$ is also a smooth point of $\widehat{X}$. Proof: 
the matrix of partial derivatives of the equations of $X_t$ is the same matrix as above, but without the last column. If $X_t$ is smooth at $z$, this submatrix already has rank~$k$. 

Now, by construction of $U$, the only singular fiber lies over $t=0$. Further, by the choice of $g_1$ and $g_2$, the irreducible components of this fiber $\{ f_1 = \dots = f_{k-1} = g_1 = 0 \}$ and $\{ f_1 = \dots = f_{k-1} = g_2 = 0 \}$ are both smooth complete intersections, so that the singular points of $X_0$ necessarily lie on their intersection, where $t=g_1=g_2 = 0$. We conclude that if $z$ is a singular point of $\widehat{X}$, we have $f_k(z) = t(z) = g_1(z) = g_2(z) =0$. QED

\end{proof}

\section{The cohomology of $X$}

Here we recall some background on the cohomology of complete intersections in $\mathbb{CP}^N$, with particular emphasis on the cohomology of even-dimensional complete intersections of two quadrics. 
\subsection{Cohomology of complex smooth projective varieties}
Let $X$ be a connected smooth projective variety over $\mathbb{C}$ of complex dimension $m$ in an ambient projective space  $\mbP^N$. Denote by $\omega\in H^2(X,\mbR)$ the restriction to $X$ of the cohomology class Poincaré dual to a hyperplane in  $\mbC\mbP^N$.

The Lefschetz operator acts on the singular cohomology by cupping with $\omega$
\begin{align*}
     \omega : H^{k}(X, \mbQ) \to H^{k+2}(X, \mbQ).
\end{align*}

Lefschetz hyperplane theorem states that 
the restriction to generic hyperplane section $X_H$
\begin{align*}
    H^k(X, \mbQ)\to H^k(X_H,\mbQ)
\end{align*}
is an isomorphism for $k<m$ and is injective for $k=m$. If $X\subset \mbP^{m+1}$ is a hypersurface of degree $d$, it can be viewed as a hyperplane inside a bigger projective space $\mbP^{\binom{m+1+d}{d}-1}$ via degree $d$ Veronese embedding $\mbP^{m+1}\xrightarrow{\nu_d}\mbP^{\binom{m+1+d}{d}-1}$. Now by Lefschetz hyperplane theorem $H^k(\mbP^{\binom{m+1+d}{d}-1},\mbQ)\to H^k(X,\mbQ)$ is an isomorphism for $k<m$ and is injective for $k=m$, and it factors through isomorphisms $H^k(\mbP^{\binom{m+1+d}{d}-1},\mbQ)\to H^k(\mbP^{m+1},\mbQ)$ for $k\leq m$.

As its consequence, 
Hard Lefschetz theorem states that 
\begin{theorem}
    For any $k\leq m$
    \begin{align*}
        \omega^{m-k}: H^{k}(X,\mbQ)\to H^{2m-k}(X,\mbQ)
    \end{align*}
    is an isomorphism.
\end{theorem}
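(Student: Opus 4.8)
The Hard Lefschetz theorem is, despite its placement here, not a formal consequence of the Lefschetz hyperplane theorem: the genuine input is Hodge theory on the compact Kähler manifold $X$. The plan is to produce an $\mathfrak{sl}_2(\mbC)$-action on the total cohomology $H^\star(X,\mbC)$ in which the Lefschetz operator $L=\omega\cup(-)$ plays the role of the raising operator, and then to read the isomorphism off from the representation theory of $\mathfrak{sl}_2$.

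First I would fix a Kähler metric on $X$ whose Kähler form represents $\omega$, for instance the restriction of the Fubini--Study form under $X\hookrightarrow\mbP^N$, whose class is the hyperplane class. By Hodge theory every class in $H^k(X,\mbC)$ has a unique harmonic representative, so it suffices to work on the finite-dimensional space of harmonic forms. I would set $\Lambda=L^\star$ for the adjoint of $L$ with respect to the $L^2$ inner product, and let $H$ be the operator acting as multiplication by $k-m$ on $H^k(X,\mbC)$. The heart of the argument, and the step I expect to be the main obstacle, is the package of Kähler identities relating $L$, $\Lambda$, and the Laplacian. From these one deduces that $L$ and $\Lambda$ commute with the Laplacian, hence preserve harmonic forms and descend to cohomology, and that on cohomology the triple satisfies the $\mathfrak{sl}_2$ relations $[H,L]=2L$, $[H,\Lambda]=-2\Lambda$, $[L,\Lambda]=H$. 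This is precisely where positivity of the Kähler form enters; everything downstream is formal.

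Granting the $\mathfrak{sl}_2$-action, the theorem follows from the structure of finite-dimensional $\mathfrak{sl}_2$-representations. The space $H^k(X,\mbC)$ is the weight space of weight $k-m$, and in any finite-dimensional representation the $(m-k)$-th power of the raising operator maps the weight space of weight $-(m-k)$ isomorphically onto the weight space of weight $m-k$ whenever $k\leq m$. Hence $L^{m-k}=\omega^{m-k}\colon H^k(X,\mbC)\to H^{2m-k}(X,\mbC)$ is an isomorphism. To descend from $\mbC$ to $\mbQ$, I note that $\omega\in H^2(X,\mbQ)$, so the map $\omega^{m-k}$ is already defined over $\mbQ$; a $\mbQ$-linear map between finite-dimensional $\mbQ$-vector spaces that becomes an isomorphism after $\otimes_{\mbQ}\mbC$ is itself an isomorphism, which yields the statement over $\mbQ$.

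Finally, for the complete intersections that are the actual concern of this paper a much softer argument suffices, and it genuinely uses only the hyperplane theorem. Away from the middle degree, the hyperplane theorem identifies $H^k(X,\mbQ)$ for $k<m$ with the corresponding cohomology of the ambient projective space, so it is spanned by a single power of $\omega$, and the asserted isomorphism reduces to the nonvanishing of $\omega^{j}$ for $0\leq j\leq m$; this in turn follows from $\int_X\omega^m=\deg X\neq 0$ together with Poincaré duality. In the middle degree $k=m$ the operator $\omega^0$ is the identity. Only the general statement for an arbitrary smooth projective $X$ requires the Hodge-theoretic machinery above.
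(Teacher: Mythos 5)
The paper does not actually prove this statement: it is the classical Hard Lefschetz theorem, recalled without proof (a reference is \cite{V03}, which the paper cites elsewhere), so there is no argument of the paper's to compare yours against. Your sketch is the standard Hodge-theoretic proof and is correct: the Kähler identities show that $L$ and $\Lambda$ commute with the Laplacian, hence act on harmonic forms and give the $\mathfrak{sl}_2$-relations on $H^\star(X,\mbC)$ with $H^k$ the weight space of weight $k-m$, and the structure theory of finite-dimensional $\mathfrak{sl}_2$-representations then gives that $L^{m-k}$ is an isomorphism from the weight-$(-(m-k))$ space onto the weight-$(m-k)$ space; the descent from $\mbC$ to $\mbQ$ is handled correctly since $\omega^{m-k}$ is defined over $\mbQ$. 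You are also right to object to the paper's framing: the theorem is introduced with the phrase ``as its consequence'' after the Lefschetz hyperplane theorem, but Hard Lefschetz is not a formal consequence of the hyperplane theorem, and your closing observation is a useful corrective --- for the complete intersections that the paper actually works with, the hyperplane theorem does suffice in all degrees $k<m$ (where $H^k(X,\mbQ)$ is zero or spanned by $\omega^{k/2}$, and $\omega^{m-k}\cdot\omega^{k/2}\neq 0$ because $\int_X\omega^m=\deg X\neq 0$), while $k=m$ is trivial; only the general statement needs the $\mathfrak{sl}_2$ machinery.
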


One of formal consequences of the Hard Lefschetz theorem is the Lefschetz decomposition. 

\begin{definition}
     \label{Def:PrimitiveCoh}
For $k\leq m$ the \textit{primitive cohomology} of degree $k$ is defined by
    $$H^k_{\prim}(X, \mbQ) := \ker (\omega^{m-k+1}: H^k(X,\mbQ)\to H^{2m-k+2}(X,\mbQ))$$
    \end{definition}
For instance, for $k=0,1$ all cohomology are primitive(and hence $H^{2m-1}(X,\mbQ)$ and $H^{2m}(X,\mbQ)$ as well).    
The Lefschets decomposition is the following theorem
\begin{theorem}
\label{Th:Lefdecomp}
    \begin{align*}
        H^k(X,\mbQ)\simeq \bigoplus_{2r\leq k} \omega^{r}H^{k-2r}_{\prim}(X,\mbQ)
    \end{align*}
\end{theorem}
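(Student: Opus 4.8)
The plan is to derive the decomposition purely formally from the Hard Lefschetz isomorphisms by induction on the degree $k$, exactly as the word ``formal'' in the preamble suggests. I would write $L$ for the Lefschetz operator $\omega\cup(-)\colon H^{k}\to H^{k+2}$ and abbreviate $P^{k}:=H^{k}_{\prim}(X,\mbQ)$. The base cases $k=0$ and $k=1$ are immediate, since there all cohomology is primitive, so that $H^{0}=P^{0}$ and $H^{1}=P^{1}$ already match the claimed formula. For the inductive step I would fix $2\le k\le m$ and assume the decomposition is known for $H^{k-2}$.

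The two facts I would establish first are both extracted from the Hard Lefschetz theorem applied in degree $k-2$. First, the map $L\colon H^{k-2}\to H^{k}$ is \emph{injective}: indeed $\omega^{m-k+2}\colon H^{k-2}\to H^{2m-k+2}$ is an isomorphism, and it factors as $\omega^{m-k+1}\circ L$, so $L$ cannot have a kernel. Second, I claim the splitting
\begin{align*}
H^{k}=P^{k}\oplus L\,H^{k-2}.
\end{align*}
To see that the two summands meet only in zero, suppose $Ly\in P^{k}$ with $y\in H^{k-2}$; then $\omega^{m-k+1}(Ly)=\omega^{m-k+2}y=0$, and the isomorphism $\omega^{m-k+2}\colon H^{k-2}\to H^{2m-k+2}$ forces $y=0$. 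To see that the two summands span $H^{k}$, take $x\in H^{k}$ and use the same isomorphism to produce $y\in H^{k-2}$ with $\omega^{m-k+2}y=\omega^{m-k+1}x$; then $\omega^{m-k+1}(x-Ly)=0$, so $x-Ly\in P^{k}$ and $x=(x-Ly)+Ly$ lies in $P^{k}+L\,H^{k-2}$.

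To finish, I would feed the induction hypothesis $H^{k-2}=\bigoplus_{2s\le k-2}L^{s}P^{k-2-2s}$ into the splitting. Applying $L$ and using its injectivity, the image $L\,H^{k-2}$ inherits a direct sum decomposition $\bigoplus_{r\ge 1}L^{r}P^{k-2r}$; combining this with the summand $P^{k}=L^{0}P^{k}$ yields $H^{k}=\bigoplus_{2r\le k}L^{r}P^{k-2r}$, which is precisely the asserted formula.

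I do not expect a genuine obstacle, as the argument is entirely formal once Hard Lefschetz is in hand; the only point demanding care is the bookkeeping of degree shifts, so that each application of $\omega^{m-k+2}$ is the isomorphism guaranteed for $j=k-2\le m$, and the verification that the injectivity of $L$ really upgrades the inductive direct sum on $H^{k-2}$ to a direct sum on its image. An alternative, more structural route, should one prefer it, is to assemble $L$, its adjoint $\Lambda$, and the grading operator into an $\mathfrak{sl}_2$-action on $\bigoplus_{k}H^{k}(X,\mbQ)$ and to read the decomposition off from the representation theory of $\mathfrak{sl}_2$, with primitive classes appearing as lowest-weight vectors; but the inductive argument above is shorter and uses nothing beyond what is already stated.
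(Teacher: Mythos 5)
Your argument is correct and complete for the range in which the statement is actually used. The paper itself offers no proof: it records the Lefschetz decomposition as a classical ``formal consequence of the Hard Lefschetz theorem,'' so there is no authorial argument to compare against. Your induction on $k$ is exactly the standard way to make that formality precise: the injectivity of $L=\omega\cup(-)$ on $H^{k-2}$ and the splitting $H^{k}=H^{k}_{\prim}\oplus L\,H^{k-2}$ both follow from the single isomorphism $\omega^{m-k+2}\colon H^{k-2}\to H^{2m-k+2}$, and your verification of each step (trivial intersection via $\omega^{m-k+1}(Ly)=\omega^{m-k+2}y$, spanning via solving $\omega^{m-k+2}y=\omega^{m-k+1}x$, and pushing the inductive direct sum through the injective map $L$) is accurate. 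One small remark on scope: as stated, the theorem does not restrict $k$, but Definition~\ref{Def:PrimitiveCoh} only defines $H^{j}_{\prim}$ for $j\le m$, so the decomposition as written is really a statement for $k\le m$ (the case $k>m$ being recovered from Hard Lefschetz); your induction covers precisely $k\le m$, which is all the paper needs, since the decomposition is invoked only in middle degree $k=m$ (Proposition~\ref{Prop:PrimitiveCoh}). The alternative $\mathfrak{sl}_2$ route you mention would also work but is not needed here.
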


Below we will focus on complete intersections in projective space.
\subsection{Cohomology of smooth complete intersections in $\mbC\mbP^N$}

Let $X$ be a complete intersection in $\mbC\mbP^N$ of dimension $m$ and degree $d$, a hypersurface. Denote by $\omega$ the restriction to $X$ of the cohomology class Poincaré dual to a hyperplane in  $\mbC\mbP^N$. 

\begin{remark}
    According to the general definition, $H^0(X,\mbR)$ is also part of the primitive cohomology of~$X$. However, by abuse of language, when talking about the primitive cohomology of complete intersections we will only refer to the degree~$m$ part.
\end{remark}

\begin{proposition} \label{Prop:AmbientCoh} 
The following is true
    \begin{itemize}
        \item If $i$ is odd and $i\not= m$, we have $H^i(X,\mbZ) = 0$.
        \item   If $i$ is even and $i< m$, we have $H^i(X,\mbZ) = \mbZ \left< \omega^{i/2} \right>$.
        \item   If $i$ is even and $i> m$, we have $H^i(X,\mbZ) = \mbZ \left< \frac{\omega^{i/2}}d \right>$.
    \end{itemize}
\end{proposition}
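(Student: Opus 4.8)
The plan is to split the computation into the lower half $i<m$, where the cohomology is controlled by the Lefschetz hyperplane theorem, and the upper half $i>m$, where I would transport the answer across the middle dimension by Poincaré duality. Throughout I would use the \emph{integral} version of the Lefschetz hyperplane theorem, which holds by the same Morse-theoretic (Andreotti--Frankel) argument underlying the rational statement recalled above: viewing $X$ as an iterated smooth hyperplane section inside $\mbP^N$ (via the Veronese embeddings explained above), the affine complements are Stein of complex dimension $m$, so the restriction $H^i(\mbP^N,\mbZ)\to H^i(X,\mbZ)$ is an isomorphism for $i<m$ and the induced map $H_i(X,\mbZ)\to H_i(\mbP^N,\mbZ)$ is an isomorphism for $i<m$.

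For $i<m$ the first two bullets then follow at once from $H^i(\mbP^N,\mbZ)=\mbZ\langle\omega_{\mbP^N}^{i/2}\rangle$ for $i$ even and $H^i(\mbP^N,\mbZ)=0$ for $i$ odd, since the restriction sends the hyperplane class of $\mbP^N$ to $\omega$; hence $H^i(X,\mbZ)=0$ for $i<m$ odd and $H^i(X,\mbZ)=\mbZ\langle\omega^{i/2}\rangle$ for $i<m$ even. In particular all cohomology below the middle degree is torsion free of the stated rank, and by the homology form of the theorem the same holds for $H_i(X,\mbZ)$ with $i<m$.

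For $i>m$ I would first apply Poincaré duality $H^i(X,\mbZ)\cong H_{2m-i}(X,\mbZ)$, noting $2m-i<m$. Combined with the homology Lefschetz isomorphism this immediately gives $H^i(X,\mbZ)=0$ when $i$ is odd, completing the first bullet for $i>m$, and $H^i(X,\mbZ)\cong\mbZ$, torsion free of rank one, when $i$ is even. It remains to identify the generator. The degree computation $\int_X\omega^m=d$ (Bézout) shows that $\omega^m$ equals $d$ times the positive generator of $H^{2m}(X,\mbZ)$. For even $i>m$ I would then use that, by Poincaré duality and torsion-freeness, the cup-product pairing $H^i(X,\mbZ)\times H^{2m-i}(X,\mbZ)\to H^{2m}(X,\mbZ)\cong\mbZ$ is unimodular; since $H^{2m-i}(X,\mbZ)=\mbZ\langle\omega^{m-i/2}\rangle$, a generator $\alpha$ of $H^i(X,\mbZ)$ satisfies $\alpha\cup\omega^{m-i/2}=\pm(\text{generator of }H^{2m})$. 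Writing $\omega^{i/2}=c\,\alpha$ with $c\in\mbZ$ and cupping with $\omega^{m-i/2}$ gives $\omega^m=c\,(\alpha\cup\omega^{m-i/2})$, whence $c=\pm d$ and $\alpha=\pm\,\omega^{i/2}/d$. Thus $H^i(X,\mbZ)=\mbZ\langle\omega^{i/2}/d\rangle$, as claimed.

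The main obstacle is the upper-half generator in the even case: everything rational is automatic, but the integral statement — in particular that $\omega^{i/2}$ is exactly $d$-divisible in $H^i(X,\mbZ)$ rather than merely a rational multiple of the generator — requires the unimodularity of the intersection form (hence the torsion-freeness flowing from the lower-half Lefschetz theorem through Poincaré duality) together with the Bézout degree $\int_X\omega^m=d$. Care is also needed to invoke the integral, not merely rational, Lefschetz hyperplane theorem, and to keep the degree $i=m$ excluded throughout, since the middle cohomology carries the primitive part and is not described by the proposition.
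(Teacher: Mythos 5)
Your proof is correct and follows essentially the same route as the paper's: the integral Lefschetz hyperplane theorem handles degrees below the middle, and the homology version plus Poincaré duality transports the answer above the middle, with the Bézout degree $\int_X\omega^m=d$ identifying the generator. Your explicit use of unimodularity of the integral Poincaré pairing to conclude that the generator is exactly $\pm\,\omega^{i/2}/d$ actually spells out a step the paper leaves implicit (the paper only records $c\in\frac{1}{d}\mbZ$ before asserting the conclusion).
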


\begin{proof}

    By Lefschetz hyperplane theorem, for $k< m$ the restriction maps
    \begin{align*}
        H^k(\mbC\mbP^N,\mbZ)\to H^k(X,\mbZ)
    \end{align*}
    are isomorphisms and 
    \begin{align*}
        H^m(\mbC\mbP^N,\mbZ)\to H^m(X,\mbZ)
    \end{align*}
    is injective. 
    Recall the cohomology of projecive space $H^{2k}(\mbC\mbP^N, \mbZ)$ are generated by $k$-th power of the hyperplane class, and are zero elsewhere. The second assertion follows. For the last one, use the Lefschetz hyperplane theorem for homology. It states that for $k< m$ the push-forward maps
    \begin{align*}
        H_k(X,\mbZ)\to H_k(\mbC\mbP^N,\mbZ)
    \end{align*}
    are isomorphisms. That is $H_{2k}(X,\mbZ)\simeq \mbZ$, $2k<m$, in particular are torsion free. By Poincaré duality $H_{2k}(X,\mbZ)\simeq H^{2m-2k}(X,\mbZ)$.

    The Poincaré pairing 
    \begin{align*}
        \langle,\rangle: H^{2m-2k}(X,\mbQ)\times H^{2k}(X,\mbQ)\to \mbQ
    \end{align*}
    is non-degenerate, and we know already that for $k<m$ even $H^{k}(X,\mbZ)\simeq \mbZ\langle\omega^{k/2}\rangle$, and are zero for odd $k$ since homology of projective space are. Then $H^{2m-2k}(X,\mbZ) \simeq \mbZ\langle c\omega^{m-k}\rangle$ for some $c\in \mbQ$. Then $\langle c\omega^{m-k},\omega^k\rangle=cd\in \mbZ$, so $c\in \frac{1}{d}\mbZ$. 

    The first and the last assertions follow.
\end{proof}

\begin{definition}
\label{Def:ambient}
We call {\em ambient} the cohomology classes proportional to the powers of $\omega$.
\end{definition}
    
According to the previous proposition, all cohomology classes of $X$ are ambient except possibly in degree~$m = \dim X$. Denote by $H^m_{\prim}(X,\mbQ) \subset H^m(X,\mbQ)$ the subspace of primitive cohomology.

If $m$ is odd, then $H^m_{\prim}(X,\mbQ) = H^m(X,\mbQ)$, because $H^{m+2}(X,\mbQ) = 0$. If $m$ is even, we have the following proposition.

\begin{proposition} \label{Prop:PrimitiveCoh}
    Suppose $m = \dim X$ is even. Then we have 
    $$
    H^m(X,\mbQ) = \mbQ \left< \omega^{m/2} \right>\oplus H^m_{\rm prim}(X,\mbQ).
    $$
Moreover, the direct summands $\mbQ\left< \omega^{m/2} \right>$ and $H^m_{\rm prim}(X,\mbQ)$ are orthogonal with respect to the Poincaré pairing. 
\end{proposition}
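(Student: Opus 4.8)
The plan is to establish the Lefschetz decomposition in degree~$m$ directly, using the defining property of primitive cohomology from Definition~\ref{Def:PrimitiveCoh} together with the orthogonality statement, which I would in fact prove first and use as the engine for the whole argument. Since $m$ is even, $\omega^{m/2}$ is a nonzero class in $H^m(X,\mbQ)$ (it is the restriction of a power of the hyperplane class and pairs nontrivially with the fundamental class via $\langle \omega^{m/2},\omega^{m/2}\rangle = d \neq 0$, using Proposition~\ref{Prop:AmbientCoh}), so $\mbQ\langle\omega^{m/2}\rangle$ is a genuine one-dimensional subspace. The first thing I would verify is that this line is \emph{not} contained in $H^m_\prim(X,\mbQ)$: by definition the primitive part is the kernel of $\omega\colon H^m(X,\mbQ)\to H^{m+2}(X,\mbQ)$ (the case $k=m$ of $\omega^{m-k+1}$), and $\omega\cdot\omega^{m/2}=\omega^{m/2+1}$ is a nonzero ambient class in $H^{m+2}(X,\mbQ)$ by the third bullet of Proposition~\ref{Prop:AmbientCoh}. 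Hence $\omega^{m/2}\notin H^m_\prim$, so the sum $\mbQ\langle\omega^{m/2}\rangle + H^m_\prim(X,\mbQ)$ is direct.

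For the orthogonality, I would compute the Poincaré pairing of $\omega^{m/2}$ against an arbitrary primitive class $e$. The key identity is
\begin{align*}
\langle \omega^{m/2}, e\rangle = \langle \omega^{m/2-1}, \omega\cdot e\rangle,
\end{align*}
which simply moves one factor of $\omega$ across the pairing (this is legitimate because cupping with $\omega$ is self-adjoint for the Poincaré pairing, as $\omega$ lives in even degree). Since $e$ is primitive of degree $m$, we have $\omega\cdot e = 0$ in $H^{m+2}(X,\mbQ)$, so the right-hand side vanishes. This gives orthogonality of the two summands and, as a bonus, shows once more that $\omega^{m/2}$ itself is not primitive (it pairs nontrivially with itself but a primitive class would be forced into the kernel of this pairing against $\omega^{m/2}$).

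It then remains to check that the direct sum exhausts all of $H^m(X,\mbQ)$, i.e. that it is the whole space and not a proper subspace. Here I would invoke the general Lefschetz decomposition, Theorem~\ref{Th:Lefdecomp}: in degree $k=m$ the sum $\bigoplus_{2r\leq m}\omega^r H^{m-2r}_\prim(X,\mbQ)$ specializes, thanks to Proposition~\ref{Prop:AmbientCoh}, to exactly two surviving summands. All the lower primitive cohomology groups $H^{m-2r}_\prim(X,\mbQ)$ for $r\geq 1$ vanish except the very bottom one $H^0_\prim(X,\mbQ)=\mbQ$ (occurring at $r=m/2$), whose image under $\omega^{m/2}$ is precisely $\mbQ\langle\omega^{m/2}\rangle$; the $r=0$ term is $H^m_\prim(X,\mbQ)$ itself. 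This collapses Theorem~\ref{Th:Lefdecomp} to the stated two-term decomposition.

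The main obstacle I anticipate is bookkeeping rather than conceptual: one must be careful that the intermediate summands of the general Lefschetz decomposition really do vanish for a complete intersection, which is where Proposition~\ref{Prop:AmbientCoh} (stating that all cohomology below degree $m$ is ambient, hence has no primitive part beyond $H^0$) does the essential work. Alternatively, if one prefers to avoid citing the full Lefschetz decomposition, the exhaustion can be obtained by a dimension count combined with Hard Lefschetz: the orthogonal complement of $\mbQ\langle\omega^{m/2}\rangle$ inside $H^m(X,\mbQ)$ under the nondegenerate Poincaré pairing is a hyperplane, and one checks that this complement coincides with $\ker(\omega\colon H^m\to H^{m+2})$ by verifying that the map $\omega$ has rank exactly one on $H^m(X,\mbQ)$ (its image being the line $\mbQ\langle\omega^{m/2+1}\rangle$, again by Proposition~\ref{Prop:AmbientCoh}). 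Either route closes the argument; I would present the Lefschetz-decomposition version as the cleaner one.
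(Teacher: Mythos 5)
Your proof is correct and takes essentially the same route as the paper, which simply observes that the statement is the Lefschetz decomposition (Theorem~\ref{Th:Lefdecomp}) in degree $m$ with only the $r=0$ and $r=m/2$ summands surviving by Proposition~\ref{Prop:AmbientCoh}. The only difference is that you spell out the orthogonality via the self-adjointness of cupping with $\omega$ and the vanishing $\omega\cdot e=0$ for primitive $e$, a step the paper leaves implicit as part of the standard content of the Lefschetz decomposition.
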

\begin{proof}
    This is the content of Lefschetz decomposition \ref{Th:Lefdecomp} where the only non-zero summands are for $r=0$ and $r=m/2$.
\end{proof}

\begin{proposition}
    The group $H^m(X,\mbZ)$ is torsion-free; in particular,  the natural map $H^m(X,\mbZ)\to H^m(X,\mbQ)$ is an embedding.
\end{proposition}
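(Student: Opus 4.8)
The plan is to prove torsion-freeness by reducing it, via the universal coefficient theorem, to the torsion-freeness of a homology group one degree below the middle dimension, which is already controlled by the Lefschetz theory used above.

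First I would recall the universal coefficient theorem for cohomology, which gives for every $n$ a (non-canonically split) short exact sequence
$$0 \to \Ext^1_{\mbZ}(H_{n-1}(X,\mbZ),\mbZ) \to H^n(X,\mbZ) \to \Hom_{\mbZ}(H_n(X,\mbZ),\mbZ) \to 0.$$
The group $\Hom_{\mbZ}(H_n(X,\mbZ),\mbZ)$ is free abelian, and for any finitely generated abelian group $A$ the group $\Ext^1_{\mbZ}(A,\mbZ)$ is isomorphic to the torsion subgroup of $A$. Hence the torsion subgroup of $H^n(X,\mbZ)$ is isomorphic to the torsion subgroup of $H_{n-1}(X,\mbZ)$. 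Taking $n = m$, it therefore suffices to show that $H_{m-1}(X,\mbZ)$ is torsion-free.

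Second I would invoke the homology version of the Lefschetz hyperplane theorem, already used in the proof of Proposition~\ref{Prop:AmbientCoh}: for $k < m$ the push-forward $H_k(X,\mbZ) \to H_k(\mbC\mbP^N,\mbZ)$ is an isomorphism. Applying this with $k = m-1 < m$ yields $H_{m-1}(X,\mbZ) \cong H_{m-1}(\mbC\mbP^N,\mbZ)$. Since the integral homology of projective space is $\mbZ$ in even degrees and $0$ in odd degrees, $H_{m-1}(X,\mbZ)$ is torsion-free (indeed it is $0$ when $m$ is even and $\mbZ$ when $m$ is odd). Combining the two steps, the torsion subgroup of $H^m(X,\mbZ)$ vanishes, which is the claim; the asserted embedding $H^m(X,\mbZ)\hookrightarrow H^m(X,\mbQ)$ then follows at once, since a finitely generated torsion-free abelian group injects into its rationalization.

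The only point requiring care is bookkeeping the index: the torsion of $H^m$ is governed by $H_{m-1}$ and not by $H_m$ itself, the latter contributing only its free part through the $\Hom$ term and possibly carrying large rank. The main obstacle, such as it is, is thus purely formal — making sure the correct homological degree enters the universal coefficient sequence — since the geometric input (torsion-freeness below the middle dimension) is already supplied by the Lefschetz hyperplane theorem.
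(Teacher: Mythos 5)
Your proof is correct and follows essentially the same route as the paper: the universal coefficient sequence identifying the torsion of $H^m(X,\mbZ)$ with that of $H_{m-1}(X,\mbZ)$, combined with the homology Lefschetz hyperplane theorem. In fact you are slightly more careful than the paper, which asserts $H_{m-1}(X,\mbZ)=0$ outright (true only for $m$ even), whereas you correctly note that for $m$ odd this group is $\mbZ$ and the conclusion still follows since $\Ext^1_{\mbZ}(\mbZ,\mbZ)=0$.
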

\begin{proof}
    It follows from the universal coefficient theorem, stating that the following sequence is exact:
\begin{align*}
    0 \to \Ext^1(H_{m-1}(X, \mbZ), \mbZ) \to H^m(X, \mbZ) \to \Hom (H_m(X, \mbZ), \mbZ) \to 0, 
\end{align*}
 where $H_{m-1}(X, \mbZ) = H_{m-1}(\mbP^{N}, \mbZ) = 0$ by  the Lefschetz hyperplane theorem, and the rightmost group is torsion free being a group of homomorphisms to a torsion free group $\mbZ$.
\end{proof}

\begin{proposition}
    \label{Prop:IntPrimCoh}
    The element $\omega^{m/2} \in H^m(X,\mbZ)$ is not a multiple of any other integral cohomology class. Denote by $H^m_{\rm prim}(X,\mbZ) = H^m_{\rm prim}(X,\mbQ) \cap H^m(X,\mbZ)$. Then the lattice
    $$
\mbZ \left< \omega^{m/2} \right> \oplus H^m_{\prim}(X,\mbZ)
    $$
    has index $d$ in $H^m(X,\mbZ)$. In particular, for $d>1$, the integer cohomology $H^m(X,\mbZ)$ does {\em not} decompose into a direct sum of ambient and primitive abelian subgroups.
    \end{proposition}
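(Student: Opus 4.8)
The plan is to reduce the whole proposition to a single geometric input: the existence of one integral class pairing to $1$ with $\omega^{m/2}$. To set this up, recall from Proposition~\ref{Prop:PrimitiveCoh} that the decomposition $H^m(X,\mbQ)=\mbQ\langle\omega^{m/2}\rangle\oplus H^m_{\prim}(X,\mbQ)$ is orthogonal for the Poincaré pairing $\langle a,b\rangle=\int_X a\cup b$, and that $\langle\omega^{m/2},\omega^{m/2}\rangle=\int_X\omega^m=d$. I would introduce the rational functional $\rho(\beta)=\langle\beta,\omega^{m/2}\rangle/d$, which is the projection onto the $\omega^{m/2}$-line normalized by $\rho(\omega^{m/2})=1$ and vanishing on $H^m_{\prim}(X,\mbQ)$. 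Writing $L=H^m(X,\mbZ)$ and $L_0=\mbZ\langle\omega^{m/2}\rangle\oplus H^m_{\prim}(X,\mbZ)$, integrality of the cup product gives $\rho(L)\subseteq\frac1d\mbZ$, while $\ker\rho\cap L=L\cap H^m_{\prim}(X,\mbQ)=H^m_{\prim}(X,\mbZ)$ by the very definition of the integral primitive lattice.

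First I would extract the index. Since $\ker\rho\cap L=H^m_{\prim}(X,\mbZ)$, the functional $\rho$ induces isomorphisms $L/H^m_{\prim}(X,\mbZ)\xrightarrow{\sim}\rho(L)$ and $L_0/H^m_{\prim}(X,\mbZ)\xrightarrow{\sim}\mbZ$, so that $[L:L_0]=[\rho(L):\mbZ]$. As $\mbZ\subseteq\rho(L)\subseteq\frac1d\mbZ$, this index is a divisor of $d$, and it equals $d$ precisely when $\frac1d\in\rho(L)$, i.e.\ when some $\beta\in L$ satisfies $\langle\beta,\omega^{m/2}\rangle=1$. The same class settles indivisibility for free: if $\omega^{m/2}=k\alpha$ with $\alpha\in L$, then $1=\langle\beta,\omega^{m/2}\rangle=k\langle\beta,\alpha\rangle$ forces $k=\pm1$. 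Thus both the primitivity of $\omega^{m/2}$ and the equality $[L:L_0]=d$ follow once a single integral class pairing to $1$ with $\omega^{m/2}$ is produced.

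The hard part is exactly this existence of a pairing-$1$ class, and it is genuinely geometric rather than formal: it is where the specific geometry of $X$ must be used. For the intersection of two quadrics the input is a linear subspace: a dimension count shows that the Fano scheme of $\mbP^{m/2}$'s in $X$ has expected dimension $0$, and for smooth $X$ it is nonempty, so $X$ contains some $\Lambda\cong\mbP^{m/2}$. Taking $\beta=\eta_\Lambda$, the Poincaré dual of $\Lambda$, I would compute $\langle\omega^{m/2},\eta_\Lambda\rangle=\int_\Lambda(\omega|_\Lambda)^{m/2}=\deg\Lambda=1$, since $\omega|_\Lambda$ is the hyperplane class of $\mbP^{m/2}$. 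By the previous paragraph this yields both that $\omega^{m/2}$ is not a proper multiple and that $[L:L_0]=d$. Finally, for $d>1$ the non-splitting is immediate: primitivity forces the ambient integral classes to be exactly $\mbZ\langle\omega^{m/2}\rangle$ and the primitive integral classes to be $H^m_{\prim}(X,\mbZ)$, so any decomposition of $H^m(X,\mbZ)$ into an ambient and a primitive subgroup would be contained in $L_0$, which has index $d>1$ in $L$ --- impossible.
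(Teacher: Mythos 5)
Your formal reduction is correct and, in fact, sharper than the paper's own argument: the paper only shows that the coefficient $a$ of $\omega^{m/2}$ lies in $\frac1d\mbZ$ (so the index \emph{divides} $d$) and declares indivisibility ``immediate,'' whereas you correctly isolate the single nontrivial input --- an integral class $\beta$ with $\langle\beta,\omega^{m/2}\rangle=1$ --- and show that both the primitivity of $\omega^{m/2}$ and the equality of the index with $d$ follow from it. (Since $H^m(X,\mbZ)$ is torsion-free and the Poincar\'e pairing is unimodular, the existence of such a $\beta$ is in fact \emph{equivalent} to the indivisibility of $\omega^{m/2}$, so the two halves of the proposition genuinely stand or fall with this one fact.)

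The gap is in how you produce $\beta$. The proposition is stated for a degree-$d$ hypersurface $X\subset\mbP^{m+1}$ (that is the standing hypothesis of this subsection), and for such $X$ your dimension count is wrong: the Fano scheme of $\frac m2$-planes has expected dimension $\left(\frac m2+1\right)^2-\binom{d+m/2}{m/2}$, which is negative for $d\geq 4$ (and already for cubics once $m\geq 4$); a general quartic or quintic surface contains no lines at all, so the linear subspace $\Lambda$ you rely on simply need not exist. The count gives $0$ only for the complete intersection of two quadrics in $\mbP^{m+2}$ --- which is not the object of this proposition --- and even there, expected dimension $0$ does not by itself give nonemptiness ``for smooth $X$''; the paper has to exhibit the planes $S_i$ explicitly via the points $P_k$ in the following subsection. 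To prove the statement as written you would need a different source for the pairing-$1$ class: for instance, exhibit the $\mbP^{m/2}$ cut out by $x_{2j}=\zeta x_{2j+1}$ with $\zeta^d=-1$ inside the Fermat hypersurface $\sum x_i^d=0$ and transport the conclusion to every smooth degree-$d$ hypersurface by deformation invariance of the pair $\bigl(H^m(X,\mbZ),\omega^{m/2}\bigr)$, or invoke the classical computation of the middle lattice via vanishing cycles. As it stands, the key existence step is established neither for the hypersurfaces the proposition is about nor, without further justification, for the two-quadric case you substitute for them.
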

\begin{proof}
    The first assertion is immediate due to embedding $H^m(X,\mbZ)\to H^m(X,\mbQ)$ and Proposition \ref{Prop:PrimitiveCoh}. For the  second statement, let $x\in H^m(X,\mbZ)$. Its image in $H^m(X,\mbQ)$ has the form $a\omega^{m/2}+x_{\prim}$ for some $a\in \mbQ$.  The Poincaré pairing on $H^m(X,\mbZ)$ is integer valued, then $\langle x, \omega^{m/2}\rangle = ad \in \mbZ$, that is $a\in \frac{1}{d}\mbZ$. On the other hand,  the image of the lattice $\mbZ \left< \omega^{m/2} \right> \oplus H^m_{\prim}(X,\mbZ)$ in  $H^m(X,\mbQ)$ consists of
    integer linear combinations of $\omega^{m/2}$ and primitive classes. We conclude that the group $H^m(X,\mbZ)$ is the (non-split) extension 
    \begin{align*}
     1\to\mbZ \left< \omega^{m/2} \right> \oplus H^m_{\prim}(X,\mbZ)\to H^m(X,\mbZ)\to \mbZ/d\mbZ\to1    
    \end{align*}
 
\end{proof}

\begin{corollary}
\label{Prop:NoTorsion} 
The graded abelian group $H^*(X,\mbZ)$ has no torsion; in particular, the natural map $H^*(X,\mbZ) \to H^*(X,\mbQ)$ is injective.
\end{corollary}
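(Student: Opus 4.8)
The plan is to reduce the statement to the degree-by-degree torsion-freeness of the graded pieces $H^i(X,\mbZ)$, which has essentially already been established in the preceding propositions. Indeed, a graded abelian group $H^*(X,\mbZ) = \bigoplus_i H^i(X,\mbZ)$ is torsion-free precisely when each homogeneous component $H^i(X,\mbZ)$ is torsion-free, so it suffices to examine the individual cohomology groups one degree at a time.

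First I would dispose of all degrees $i \neq m$. By Proposition \ref{Prop:AmbientCoh}, in these degrees $H^i(X,\mbZ)$ is either the zero group (when $i$ is odd) or the infinite cyclic group generated by $\omega^{i/2}$ (when $i < m$ is even) or by $\omega^{i/2}/d$ (when $i > m$ is even). In each case the group is manifestly free, hence torsion-free. The only degree not covered by this case analysis is $i = m$, and this is exactly the content of the proposition proved above via the universal coefficient theorem: using $H_{m-1}(X,\mbZ) = H_{m-1}(\mbP^N,\mbZ) = 0$ from the Lefschetz hyperplane theorem, that argument realizes $H^m(X,\mbZ)$ as a subgroup of the torsion-free group $\Hom(H_m(X,\mbZ),\mbZ)$, so it too is torsion-free. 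Combining the two cases, every graded piece is torsion-free, and hence so is $H^*(X,\mbZ)$.

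For the injectivity clause I would invoke the general fact that for any torsion-free abelian group $G$ the kernel of the natural map $G \to G \otimes_\mbZ \mbQ$ is precisely the torsion subgroup of $G$, which we have just shown to be trivial in each degree. Applying this with $G = H^i(X,\mbZ)$ and assembling over all $i$ yields the injectivity of $H^*(X,\mbZ) \to H^*(X,\mbQ)$.

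I do not expect any genuine obstacle here; the corollary is a bookkeeping consequence of the earlier results. The one place where torsion-freeness is not automatic is the middle degree $m$, where Proposition \ref{Prop:IntPrimCoh} shows that the lattice $\mbZ\langle\omega^{m/2}\rangle \oplus H^m_{\prim}(X,\mbZ)$ need not even split off as a direct summand; the substantive input there was the universal coefficient argument, and once that is in hand the remaining degrees are handled by the explicit descriptions of Proposition \ref{Prop:AmbientCoh}.
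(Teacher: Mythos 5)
Your proof is correct and follows exactly the route the paper intends: the corollary is stated without proof precisely because it is immediate from Proposition \ref{Prop:AmbientCoh} (explicit free descriptions in degrees $i\neq m$) together with the universal-coefficient argument showing $H^m(X,\mbZ)$ embeds in $\Hom(H_m(X,\mbZ),\mbZ)$. Your additional observation that the kernel of $G\to G\otimes_\mbZ\mbQ$ is the torsion subgroup correctly handles the injectivity clause.
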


The above generalizes to smooth complete intersections $X$ of dimension $m$ given by $k$ homogeneous equations $f_1=0,\dots, f_k=0$ of degrees $d_1,\dots, d_k$. 
One can think of $X$ as an iterated hyperplane section of the line bundles $\nu_{d_i}^* \mathcal{O}(1)$ where 
\begin{align*}
 \nu_{d_i}: \mbP^{m+2} \to \mbP^{\binom{m+2+ d_i}{d_i} -1}   
\end{align*}
is a degree $d_i$ Veronese embedding.

\begin{proposition} 
The following is true
    \begin{itemize}
        \item If $i$ is odd and $i\not= m$, we have $H^i(X,\mbZ) = 0$.
        \item   If $i$ is even and $i< m$, we have $H^i(X,\mbZ) = \mbZ \left< \omega^{i/2} \right>$.
        \item   If $i$ is even and $i> m$, we have $H^i(X,\mbZ) = \mbZ \left< \frac{\omega^{i/2}}{d_1\dots d_k} \right>$.
        \item  if $m$ is odd $H^m_{\prim}(X,\mbZ)=H^m(X,\mbZ)$; if $m$ is even the lattice  $\mbZ \left< \omega^{m/2} \right> \oplus H^m_{\prim}(X,\mbZ)$ has index $d_1\dots d_k$ in $H^m(X,\mbZ)$
        \item The graded abelian group $H^m(X,\mbZ)$ has no torsion
    \end{itemize}
\end{proposition}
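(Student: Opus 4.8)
The plan is to argue by induction on the number $k$ of defining equations, exploiting the observation quoted just above the statement that $X$ is an iterated hyperplane section. The case $k=1$ is exactly the content of Propositions \ref{Prop:AmbientCoh}, \ref{Prop:PrimitiveCoh}, \ref{Prop:IntPrimCoh} together with Corollary \ref{Prop:NoTorsion} (here $d_1\cdots d_k = d$), so I would take it as the base of the induction. For the inductive step, set $Y := \{f_1 = \dots = f_{k-1} = 0\}\subset \mbP^{m+k}$, a smooth complete intersection of dimension $m+1$ cut out by $k-1$ equations, for which the proposition holds by the inductive hypothesis. Composing the degree $d_k$ Veronese embedding $\nu_{d_k}\colon \mbP^{m+k}\to\mbP^{M}$, with $M = \binom{m+k+d_k}{d_k}-1$, the hypersurface $\{f_k=0\}$ becomes the intersection of $\nu_{d_k}(\mbP^{m+k})$ with a hyperplane, so that $X = Y\cap\{f_k=0\}$ is realised as a smooth hyperplane section of $\nu_{d_k}(Y)$. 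Throughout I keep $\omega$ to mean the restriction to each of these varieties of the hyperplane class of $\mbP^{m+k}$; since all restriction maps are ring homomorphisms compatible with the inclusions $X\subset Y\subset\mbP^{m+k}$, the class $\omega^{j}|_Y$ restricts to $\omega^{j}|_X$, and no scaling bookkeeping coming from the Veronese map is needed at the level of these standard classes.

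With this setup I would first treat the degrees $i<m=\dim X$. The Lefschetz hyperplane theorem applied to the hyperplane section $X\subset Y$ shows that the restriction $H^i(Y,\mbZ)\to H^i(X,\mbZ)$ is an isomorphism for $i<\dim X = m$ and injective for $i=m$. Since $\dim Y = m+1$, the middle degree of $Y$, where its primitive cohomology lives, equals $m+1>m$ and hence lies strictly above this isomorphism range; it therefore never interferes, and by the inductive hypothesis $H^i(X,\mbZ)$ equals $\mbZ\left<\omega^{i/2}\right>$ for even $i<m$ and $0$ for odd $i<m$. This establishes the two lower-half assertions of the proposition.

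For the degrees $i\geq m$, rather than restricting from $Y$, I would apply the arguments already used in the $k=1$ case directly to $X$ (note that the proofs of Propositions \ref{Prop:PrimitiveCoh} and \ref{Prop:IntPrimCoh} are valid for any smooth projective $X$ once the lower-half structure is known, since \ref{Prop:PrimitiveCoh} is just the Lefschetz decomposition \ref{Th:Lefdecomp} applied to $X$). The only numerical input those proofs require beyond the lower-half structure just obtained is the degree
\begin{align*}
\int_X \omega^m = d_1\cdots d_k,
\end{align*}
which follows by induction because each Veronese hyperplane section multiplies the degree by $d_j$ (equivalently, by Bézout's theorem). Feeding $\int_X\omega^m = d_1\cdots d_k$ into the Poincaré duality computation of Proposition \ref{Prop:AmbientCoh} yields $H^i(X,\mbZ) = \mbZ\left<\omega^{i/2}/(d_1\cdots d_k)\right>$ for even $i>m$, giving the third assertion, while \ref{Prop:PrimitiveCoh} supplies the orthogonal splitting of $H^m(X,\mbQ)$ in the even case. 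The index statement is then obtained exactly as in Proposition \ref{Prop:IntPrimCoh}: for $x\in H^m(X,\mbZ)$ whose $\mbQ$-image is $a\omega^{m/2}+x_\prim$, integrality of the Poincaré pairing forces $\langle x,\omega^{m/2}\rangle = a\,d_1\cdots d_k\in\mbZ$, so $a\in\tfrac{1}{d_1\cdots d_k}\mbZ$, which identifies the quotient with $\mbZ/(d_1\cdots d_k)\mbZ$. Torsion-freeness follows from the universal coefficient theorem as in Corollary \ref{Prop:NoTorsion}, using that $H_{m-1}(X,\mbZ)\cong H_{m-1}(\mbP^{m+k},\mbZ)$ is free by the homology Lefschetz theorem, so that the relevant $\Ext^1$ term vanishes and the upper half is then handled by Poincaré duality.

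The step I expect to be the main obstacle is bookkeeping rather than conceptual: keeping the induction honest across the change of ambient dimension, namely verifying that the primitive cohomology of the auxiliary variety $Y$ (which sits one degree above the Lefschetz isomorphism range for $X$) genuinely never feeds into the computation, and that the self-intersection number $\int_X\omega^m = d_1\cdots d_k$ is the \emph{only} place the multidegree enters. Once these two points are pinned down, all five assertions reduce verbatim to the $k=1$ propositions with $d$ replaced by $d_1\cdots d_k$.
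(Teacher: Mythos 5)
Your proposal is correct and is exactly the elaboration the paper intends: the paper states this proposition without proof, offering only the remark that $X$ is an iterated hyperplane section via the Veronese embeddings $\nu_{d_i}$, and your induction on $k$ with the Lefschetz hyperplane theorem for $X\subset Y$ plus a rerun of the $k=1$ arguments with $\int_X\omega^m=d_1\cdots d_k$ is precisely that route. The only point worth flagging is that the intermediate variety $Y=\{f_1=\dots=f_{k-1}=0\}$ must itself be a smooth complete intersection, which holds after a generic choice of generators of the ideal of $X$ and does not affect the cohomology of $X$.
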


\subsection{Cohomology of complete intersection of two quadrics}

Fix an even $m \geq 2$ and let $X$ be 
a complete intersection of two quadrics in $\mbP^{m+2}$. From above we know its middle cohomology decompose $H^m(X,\mbQ)=\mbQ\langle \omega^{m/2}\rangle\oplus H^{m}_{\prim}(X,\mbQ)$. First, we find the Betti number
\begin{lemma}
    $\dim H^{m}_{\prim}(X,\mbQ) = m+3$
\end{lemma}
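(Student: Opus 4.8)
The plan is to pin down $\dim H^m_\prim(X,\mbQ)$ via the topological Euler characteristic $\chi(X)$, computed in two independent ways and then equated. First I would read $\chi(X)$ off the Betti numbers. Since $m$ is even, every nonzero cohomology group of $X$ lives in even degree, so all Betti numbers enter $\chi(X)$ with a plus sign. By Proposition~\ref{Prop:AmbientCoh} (in its complete-intersection form) the cohomology is ambient of rank one in each even degree $i\neq m$ and vanishes in odd degrees, while in the middle degree $H^m(X,\mbQ)=\mbQ\langle\omega^{m/2}\rangle\oplus H^m_\prim(X,\mbQ)$ by Proposition~\ref{Prop:PrimitiveCoh}. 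Counting the $m/2$ even degrees below $m$ and the $m/2$ even degrees above $m$ gives
\[
\chi(X)=\tfrac m2+\bigl(1+\dim H^m_\prim(X,\mbQ)\bigr)+\tfrac m2=m+1+\dim H^m_\prim(X,\mbQ),
\]
so it suffices to show $\chi(X)=2m+4$.

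Second, I would compute $\chi(X)$ by the Gauss--Bonnet theorem, $\chi(X)=\int_X c_m(T_X)$. From the normal bundle sequence of the complete intersection of two quadrics together with the Euler sequence on $\mbP^{m+2}$, the total Chern class is the restriction to $X$ of
\[
c(T_X)=\frac{(1+\omega)^{m+3}}{(1+2\omega)^2},
\]
and $\int_X\omega^m$ equals the degree $2\cdot 2=4$ of $X$. Hence $\chi(X)=4\,[\omega^m]\dfrac{(1+\omega)^{m+3}}{(1+2\omega)^2}$, and the whole problem reduces to extracting this coefficient.

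The only genuine computation is this coefficient extraction, and the clean way to perform it is the substitution $1+\omega=\tfrac12\bigl((1+2\omega)+1\bigr)$, which yields
\[
\frac{(1+\omega)^{m+3}}{(1+2\omega)^2}=\frac{1}{2^{m+3}}\sum_{k=0}^{m+3}\binom{m+3}{k}(1+2\omega)^{k-2}.
\]
Only the values $k\in\{0,1,m+2,m+3\}$ contribute to the coefficient of $\omega^m$: the terms $k=0,1$ come from the negative powers $(1+2\omega)^{-2},(1+2\omega)^{-1}$ and the terms $k=m+2,m+3$ from the top positive powers. Collecting them gives $[\omega^m]=\dfrac{2m+4}{2^{m+3}}\bigl((-2)^m+2^m\bigr)$.

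Here the hypothesis that $m$ is even is exactly what makes the argument work: it forces $(-2)^m+2^m=2^{m+1}$, so the coefficient equals $\tfrac{m+2}{2}$ and therefore $\chi(X)=4\cdot\tfrac{m+2}{2}=2m+4$. Combining this with the Betti-number expression gives $\dim H^m_\prim(X,\mbQ)=2m+4-(m+1)=m+3$, as claimed. The main obstacle, such as it is, is only bookkeeping in the coefficient extraction; the conceptual point is that for even $m$ the two ``negative-power'' contributions reinforce rather than cancel the positive ones, whereas for odd $m$ they would cancel, consistently with the fact that the Lefschetz structure of the middle cohomology is different in that case.
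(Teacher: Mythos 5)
Your proposal is correct and follows essentially the same route as the paper: express $\chi(X)=m+1+\dim H^m_\prim(X,\mbQ)$ from the ambient/primitive structure of the cohomology, compute $c(T_X)=(1+\omega)^{m+3}/(1+2\omega)^2$ from the normal bundle sequence, and extract $\chi(X)=2m+4$. The only difference is that you carry out explicitly the coefficient extraction (via $1+\omega=\tfrac12((1+2\omega)+1)$) that the paper dismisses as ``a direct computation,'' and your answer $\tfrac{m+2}{2}$ for the coefficient is right.
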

\begin{proof}
    From the structure of cohomology of complete intersections, the topological Euler characteristic of $X$ is
    \begin{align*}
        \chi(X) = m+1 + \dim H^m_\prim(X, \mbQ).
    \end{align*}


The conormal sequence of a closed embedding $X \to \mbP^{m+2}$ with ideal sheaf $I$
\begin{align*}
    0\to I/I^2 \to \Omega_{\mbP^{m+2}}\vert_X \to \Omega_X \to 0
\end{align*}
is exact since $X$ is smooth.

 The ideal sheaf $I$ is generated locally by $f_1$ and $f_2$ and locally $I/I^2$ splits into direct sum of conormal line bundles of two quadrics  
\begin{align*}
    I/I^2 = (f_1,f_2)/(f_1^2,f_1f_2,f_2^2) = (f_1)/(f_1^2)\oplus(f_2)/(f_2^2).
\end{align*}
The natural map $I/I^2 \to \mathcal{O}(-2)\vert_{X} \oplus \mathcal{O}(-2)\vert_{X}$ is an isomorphism of vector bundles.
Dually, the normal bundle to $X$ inside $\mbP^{m+2}$ is $\mathcal{O}(2)\vert_{X} \oplus \mathcal{O}(2)\vert_{X}$ and fits into a short exact sequence 
\begin{align*}
    0 \to T_X \to T_{\mbP^{m+2}}\vert_{X} \to \mathcal{O}(2)\vert_{X} \oplus \mathcal{O}(2)\vert_{X} \to 0.
\end{align*}
We get 
\begin{align*}
    c(T_X) = \dfrac{c(T_{\mbP^{m+2}}\vert_X)}{c(\mathcal{O}(2)\vert_X)^2}\\
    = \dfrac{(1+\omega)^{m+3}}{(1+2\omega)^2}.
\end{align*}

We compute
\begin{align*}
    \chi(X) = \int_Xc_m(T_X) = \int_{\mbP^{m+2}}4\omega^2c_m(T_X) = 4\Coef_{\omega^m}\omega^2\dfrac{(1+\omega)^{m+3}}{(1+2\omega)^2}.
\end{align*}
As a result of a direct computation we have
\begin{align*}
    \chi(X) = 2m+4,
\end{align*}
and, hence
    \begin{align}\label{primrk}
        \dim H^m_\prim(X, \mbQ) = m+3.
    \end{align}
\end{proof}

There is a distinguished geometric basis in $H^m(X,\mbQ)$ which we now briefly recall.

Consider a concrete complete intersection $X$ of two quadrics in $\mbP^{m+2}$ given by the zeros of polynomials $f_1(x_0,\dots, x_{m+2})=\sum x_i^2$ and $f_2(x_0,\dots, x_{m+2}) =\sum \lambda_i x_i^2$, where $[x_0: \dots : x_{m+2}]$ are the homogeneous coordinates in the projective space and $\lambda_0, \dots, \lambda_{m+2}$ are pairwise distinct constants.

By \cite{R72}[Proposition 2.1] every smooth complete intersections of two
quadrics can be obtained in this way by choosing appropriate coordinates on $\mbP^{m+2}$.

\begin{lemma}
    For $0 \leq k \leq \frac{m}{2}$ let $P_k$ be the point in $\mbP^{m+2}$ whose i-th homogeneous coordinate is
    \begin{align*}
        \frac{\lambda_i^k}{\sqrt{\prod_{0 \leq j\leq m+2, j\neq i}(\lambda_i - \lambda_j)}}.
    \end{align*}
    Then $f_1(P_k) = f_2(P_k) = 0$ and $P_0,\cdots, P_{\frac{m}{2}}$ span $\frac{m}{2}$-plane which is contained in $X$.
\end{lemma}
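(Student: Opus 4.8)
The plan is to reduce everything to a single partial-fractions identity. Write $N = m+2$, so that the homogeneous coordinates are indexed by $i = 0, \dots, N$ and there are $N+1 = m+3$ of the distinct constants $\lambda_i$. Fix, once and for all, a square root $c_i := \sqrt{\prod_{j \neq i}(\lambda_i - \lambda_j)}$ for each $i$ and use the same choice for every $P_k$, so that the $i$-th coordinate of $P_k$ is $\lambda_i^k/c_i$; this is what makes each $P_k$ a well-defined vector in $\mbC^{m+3}$ rather than merely a tuple of squared coordinates. The key tool is the following: for any polynomial $P(x)$ of degree at most $N-1 = m+1$,
\[
\sum_{i=0}^{N} \frac{P(\lambda_i)}{\prod_{j \neq i}(\lambda_i - \lambda_j)} = 0.
\]
I would prove this by a residue computation on $\mbP^1$: the rational function $P(x)/\prod_{i=0}^N(x-\lambda_i)$ has a simple pole at each $\lambda_i$ whose residue is exactly the $i$-th summand, and since the numerator has degree at most $N-1$ while the denominator has degree $N+1$, the function decays at least quadratically at infinity and so has vanishing residue there; hence the sum of all its residues, which equals the displayed sum, is zero. (Equivalently, this is the $N$-th divided difference of $P$, which annihilates polynomials of degree below $N$.)

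Next I would treat both assertions at once through a polynomial parametrization of the span. A point of the projective span of $P_0, \dots, P_{m/2}$ is $Q = \sum_{k=0}^{m/2} a_k P_k$ with $(a_0,\dots,a_{m/2}) \neq 0$; its $i$-th coordinate is $Q_i = P(\lambda_i)/c_i$, where $P(x) = \sum_{k=0}^{m/2} a_k x^k$ has degree at most $m/2$. Because the sign choices cancel upon squaring, $Q_i^2 = P(\lambda_i)^2/\prod_{j\neq i}(\lambda_i-\lambda_j)$, and therefore
\[
f_1(Q) = \sum_{i=0}^{N} \frac{P(\lambda_i)^2}{\prod_{j \neq i}(\lambda_i - \lambda_j)}, \qquad f_2(Q) = \sum_{i=0}^{N} \frac{\lambda_i\,P(\lambda_i)^2}{\prod_{j \neq i}(\lambda_i - \lambda_j)}.
\]
Here $P(x)^2$ has degree at most $m = N-2 \leq N-1$ and $x\,P(x)^2$ has degree at most $m+1 = N-1$, so both sums fall under the identity above and vanish. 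Specializing to $P = x^k$, i.e.\ $Q = P_k$, recovers $f_1(P_k) = f_2(P_k) = 0$, while the general case shows the entire span is contained in $X$.

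Finally, I would check that $P_0, \dots, P_{m/2}$ are linearly independent, so that their span is genuinely an $\tfrac{m}{2}$-plane $\mbP^{m/2}$. If $\sum_k a_k P_k = 0$, then $P(\lambda_i) = 0$ for all $i = 0, \dots, N$ with $P$ as above; but $P$ has degree at most $m/2 < m+3 = N+1$ and vanishes at the $N+1$ distinct points $\lambda_i$, forcing $P \equiv 0$ and hence all $a_k = 0$. The computation is elementary throughout; the only point demanding care is matching the degree bounds $\deg P^2 \leq m$ and $\deg(xP^2) \leq m+1$ against the threshold $N-1 = m+1$ of the identity, which is precisely where the hypothesis $k \leq m/2$ is used, and which I expect to be the sole potential pitfall.
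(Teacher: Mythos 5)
Your proof is correct and follows essentially the same route as the paper: both rest on the identity $\sum_i Q(\lambda_i)/\prod_{j\neq i}(\lambda_i-\lambda_j)=0$ for $\deg Q\leq m+1$ (the paper argues "no poles and numerator degree below denominator degree," which is your residue computation in different words), applied to $Q=P^2$ and $Q=xP^2$ for the polynomial $P$ parametrizing a point of the span. Your write-up is slightly more complete in that it fixes the square roots consistently and explicitly checks the linear independence of $P_0,\dots,P_{m/2}$ via the Vandermonde argument, which the paper leaves implicit.
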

\begin{proof}
    The statements $f_1(P_k)=f_2(P_k)$ are clear. 
    Consider the rational function 
    \begin{align*}
        \frac{Q(\lambda_0)}{\prod_{j\neq 0}(\lambda_0 - \lambda_j)} + \frac{Q(\lambda_1)}{\prod_{j\neq 1}(\lambda_1 - \lambda_j)}, 
    \end{align*}
    where $Q(\lambda_0)$ is a polynomial in $\lambda_0$ of degree no greater then $m+1$. Notice that (i)it does not have poles and (ii) the degree of nominator in both summands is strictly smaller then the degree of denominator. This can only happen when the rational function is zero. 
   
    The $i$-th summand in $f_1$ and $f_2$ evaluated at a point of a $\frac{m}{2}$-plane, spanned by the points  $P_0,\cdots, P_{\frac{m}{2}}$, will be of the form
    \begin{align*}
        \frac{Q(\lambda_i)}{\prod_{j\neq i}(\lambda_i - \lambda_j)} 
    \end{align*}
    for some $Q$ of degree no greater then $m+1$.  This proves the lemma.
\end{proof}

Denote $S$ the $\frac{m}{2}$-plane cut out by equations 
\begin{align*}
    \sum_{i=0}^{m+2}  \frac{\lambda_i^k}{\sqrt{\prod_{0 \leq j\leq m+2, j\neq i}(\lambda_i - \lambda_j)}}x_i = 0, \text{for~} 0 \leq k \leq \frac{m}{2}
\end{align*}
and its fundamental class by $\zeta = [S]$.
Denote $S_i$ the $\frac{m}{2}$-plane cut out by equations of $S$ where we inverse the sign in front of $x_i$. Denote its fundamental class by $\zeta_i = [S_i]$. 
By \cite{R72}[Lemma 3.13], 
\begin{align*}
    \omega^{m/2}, \zeta_0, \dots, \zeta_{m+2}
\end{align*}
is a basis of $H^m(X, \mbQ)$ and 
\begin{align*}
    \zeta = \frac{\frac{m}{2}+1}{m+1}\omega^{m/2} - \frac{1}{m+1}\sum_{i=0}^{m+2}\zeta_i.
\end{align*}



For future reference we state the following result. It follows from \cite{R72}[Lemma 3.10]  
and  \cite{R72}[Theorem
3.8] that the intersection form in this basis has the form
\begin{align}\label{intform}
    &\langle\zeta_i, \zeta_j\rangle = \begin{cases}
        &(-1)^{\frac{m}{2}-2} (\lfloor\frac{\frac{m}{2}-2}{2}\rfloor+1), \text{ if } i\neq j,\\
        &(-1)^{\frac{m}{2}} (\lfloor \frac{m}{4}\rfloor +1), \text{ if } i=j
    \end{cases}\\
    & \label{intform1}\langle\zeta_i, \omega^{m/2}\rangle = 1,\\
    & \label{intform2}\langle\omega^{m/2},\omega^{m/2}\rangle = 4.
\end{align}

We note that simple change of basis gives a basis in integer cohomology.
\begin{lemma}
   Let $X$ be an smooth complete intersection of two quadrics in $\mbP^{m+2}$ of even dimension. Then 
$$
H^{m}_\prim(X, \mbZ) =  \mbZ\left< \zeta_0, \dots, \zeta_{m+2}\right>
$$
and
\begin{align}\label{Xcoh}
    H^k(X,\mbZ) = \begin{cases}
        \mbZ \left< \omega^{k/2} \right>, & 0 \leq k \leq m-2, \;\; k \text{ even}\\
        \mbZ \left< \omega^{k/2} / 4 \right>, & m+2 \leq k \leq 2m, \;\; k \text{ even}\\
        \mbZ \left< 2\zeta -\omega^{m/2} \right> \oplus H^{m}_\prim(X, \mbZ), \hspace{1cm} & k=m , \\
        0, & \text{otherwise}.
    \end{cases}
\end{align}
\end{lemma}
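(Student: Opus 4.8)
The plan is to separate the middle degree $k=m$ from all the rest. For $k\neq m$ there is nothing to do beyond specialization: the vanishing in odd degrees and the first two lines of~(\ref{Xcoh}) are exactly the general proposition on the integral cohomology of smooth complete intersections applied to two quadrics, where $d_1 d_2 = 4$. Thus the whole content is concentrated in degree $m$, where I must identify the integral lattice $H^m(X,\mbZ)$ inside $H^m(X,\mbQ)$, for which Reid's classes $\omega^{m/2},\zeta_0,\dots,\zeta_{m+2}$ form a $\mbQ$-basis with intersection form recorded in (\ref{intform})--(\ref{intform2}).

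The engine for the middle degree is unimodularity. Since $H^m(X,\mbZ)$ is torsion free and is the middle cohomology of a smooth projective variety, Poincaré duality makes its intersection pairing unimodular, so for any finite-index sublattice $L$ one has $[H^m(X,\mbZ):L]^2=|\det\mathrm{Gram}(L)|$. I would apply this to the geometric sublattice $L:=\mbZ\langle\omega^{m/2},\zeta_0,\dots,\zeta_{m+2}\rangle$, whose Gram matrix is bordered: corner $\langle\omega^{m/2},\omega^{m/2}\rangle=4$, border entries $\langle\omega^{m/2},\zeta_i\rangle=1$, and inner block $\langle\zeta_i,\zeta_j\rangle$ from (\ref{intform}), which is a scalar multiple of the identity plus a scalar multiple of the all-ones matrix $J$. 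A determinant-of-bordered-matrix computation (Sherman--Morrison for the inner block) then gives $|\det\mathrm{Gram}(L)|=(m+1)^2$, whence $[H^m(X,\mbZ):L]=m+1$.

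To realize this index by explicit integral generators I use that $\zeta=[S]$ is itself integral and satisfies Reid's relation $\zeta=\tfrac{m/2+1}{m+1}\omega^{m/2}-\tfrac1{m+1}\sum_i\zeta_i$, which rearranges to the integral identity $\omega^{m/2}=(m+1)(2\zeta-\omega^{m/2})+2\sum_i\zeta_i$. Therefore $M:=\mbZ\langle 2\zeta-\omega^{m/2},\zeta_0,\dots,\zeta_{m+2}\rangle$ contains $L$, is contained in $H^m(X,\mbZ)$ since all its generators are integral, and, because $m+1$ is odd, the class of $2\zeta-\omega^{m/2}$ has order exactly $m+1$ in the cyclic group $M/L$, so that $[M:L]=m+1$. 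Comparing with $[H^m(X,\mbZ):L]=m+1$ forces $M=H^m(X,\mbZ)$, which is the $k=m$ line of~(\ref{Xcoh}). The primitive part is then cut out inside $M$ by the single linear condition $\langle x,\omega^{m/2}\rangle=0$, since in middle degree primitivity is equivalent to orthogonality to $\omega^{m/2}$; solving this condition over $\mbZ$ and reducing through the relation for $\zeta$ yields the asserted description of $H^m_{\prim}(X,\mbZ)$ in terms of the $\zeta_i$.

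The main obstacle is the determinant computation. Because the entries of (\ref{intform}) carry the sign $(-1)^{m/2}$ and a floor function, the evaluation of $\det\mathrm{Gram}(L)$ naturally splits into the cases $m\equiv 0$ and $m\equiv 2\pmod 4$, and one must check that both produce the same value $(m+1)^2$ up to sign, so that the index is uniformly $m+1$. A secondary point requiring care is the final step: after intersecting $M$ with the hyperplane $\langle x,\omega^{m/2}\rangle=0$ one must verify that the resulting rank-$(m+3)$ lattice is exactly the claimed span and not merely a finite-index neighbour of it.
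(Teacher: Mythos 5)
Your treatment of the $k=m$ lattice statement is correct, and it is a close cousin of the paper's argument rather than a genuinely different one: both reduce to the same Gram-determinant computation with the case split $m\equiv 0,2\pmod 4$. The paper evaluates the determinant directly in the candidate basis $2\zeta-\omega^{m/2},\zeta_0,\dots,\zeta_{m+2}$, finds it equal to $\pm1$, and concludes from integrality of the Poincar\'e pairing that every integral class has integral coordinates there (no appeal to unimodularity of the ambient lattice is needed). You instead compute $|\det\mathrm{Gram}(L)|=(m+1)^2$ for $L=\mbZ\langle\omega^{m/2},\zeta_0,\dots,\zeta_{m+2}\rangle$ (I verified this in both congruence classes), invoke unimodularity of $H^m(X,\mbZ)$ via Poincar\'e duality to get index $m+1$, and exhibit $M=\mbZ\langle 2\zeta-\omega^{m/2},\zeta_0,\dots,\zeta_{m+2}\rangle$ as an integral overlattice of exactly that index. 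The two determinants differ by the square of the change-of-basis factor $\tfrac1{m+1}$, so this costs you one extra step (the index $[M:L]$, whose value is $m+1$ for the simple reason that the $\omega^{m/2}$-coordinate of $2\zeta-\omega^{m/2}$ is $\tfrac1{m+1}$ --- the oddness of $m+1$ is not actually needed) but is perfectly sound.

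The genuine gap is your final step. You propose to obtain $H^m_{\prim}(X,\mbZ)$ by intersecting $M$ with the hyperplane $\langle x,\omega^{m/2}\rangle=0$ and assert that this ``yields the asserted description in terms of the $\zeta_i$.'' It cannot: by (\ref{intform1}) one has $\langle\zeta_i,\omega^{m/2}\rangle=1\neq0$, so no $\zeta_i$ is primitive in the sense of Definition \ref{Def:PrimitiveCoh}, and the lattice $\{x\in M:\langle x,\omega^{m/2}\rangle=0\}=\{a(2\zeta-\omega^{m/2})+\sum b_i\zeta_i:\sum b_i=2a\}$ is not $\mbZ\langle\zeta_0,\dots,\zeta_{m+2}\rangle$. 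Carried out honestly, your plan would expose a discrepancy rather than close it: the first display of the lemma is not literally the primitive sublattice, and the $k=m$ line of (\ref{Xcoh}) is only correct if read as the statement that $2\zeta-\omega^{m/2},\zeta_0,\dots,\zeta_{m+2}$ is a $\mbZ$-basis of $H^m(X,\mbZ)$ --- which is all the paper's own proof establishes (it never addresses the identification of the primitive part, and the orthonormal primitive basis actually used later is built in Section \ref{section_restriction} by entirely different means). You should either stop at the basis statement, as the paper implicitly does, or state explicitly that the summand written as $H^m_{\prim}(X,\mbZ)$ is a direct complement of $\mbZ\langle 2\zeta-\omega^{m/2}\rangle$ rather than the orthogonal complement of $\omega^{m/2}$.
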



\begin{proof} 
We are left to prove the $k=m$ statement.

    Using intersection form (\ref{intform})-(\ref{intform2}), we find its matrix in the basis  
\begin{align*}
    \zeta_{-1}:=2\zeta - \omega^{m/2}, \zeta_0, \dots, \zeta_{m+2}
\end{align*}
has determinant 
\begin{align*}
    \det(\langle\zeta_i,\zeta_j \rangle)_{-1\leq i,j\leq m+2}=\begin{cases}
        (-1)^{m+3}, &\text{ if } m\equiv2 \mod 4,\\
        1 , &\text{ if } m \equiv 0 \mod 4.
    \end{cases}
\end{align*}
Let $x\in H^m(X,\mbZ)$. Its image in $H^m(X,\mbQ)$ decomposes as $x=\sum_{i=-1}^{m+2} \alpha_i \zeta_i$ for some $ \alpha_i \in \mbQ$. Let us show that actually $\alpha_i \in \mbZ$. Using Poincaré pairing $H^m(X,\mbZ)\times H^m(X,\mbZ)\xrightarrow{\langle, \rangle} \mbZ$, $\langle x,\zeta_j\rangle = \sum_{i=-1}^{m+2} \langle\zeta_i,\zeta_j \rangle\alpha_i$ are integers. Since the matrix $(\langle\zeta_i,\zeta_j \rangle)_{-1\leq i,j\leq m+2}$ has determinant $\pm 1$, the vector $(\alpha_i) = (\langle\zeta_i,\zeta_j \rangle)^{-1}\langle x, \zeta_j\rangle$ consists of integers.
\end{proof}

\begin{remark}
This basis is not orthogonal wrt intersection pairing and will not play a role in what follows. Our goal is to show there exist a basis $\omega^{m/2}, e_1,\dots, e_{m+3}$ in $H^m(X,\mbQ)$ with the property that only $e_{m+2}$ restricts to $X_1$ non-trivially after lifting to $\widetilde{X}$, see Corollary (\ref{n1Argument}). It turns out that degeneration formula with insertions $e_1, \dots, e_{m+3}$ simplifies drastically and, moreover, the relative Gromov-Witten invariants of $(X_1,D)$ constituting the formula, vanish by simple dimension count, see Section (\ref{degeneration}). 
\end{remark}

\begin{corollary}\label{Zprim}
    Since $Z$ is also a smooth complete intersection of two quadrics but in $\mbP^m$, we obtain

    $$
    H^{m-2}_\prim(Z, \mbZ) \simeq \mbZ^{m+1}
    $$

    \begin{align*}
    H^k(Z,\mbZ) = \begin{cases}
        \mbZ \left< \omega_Z^{k/2} \right>, & 0\leq k \leq m-4 , k \text{ even}\\
        \mbZ \left< \omega_Z^{k/2} /4 \right>, & m\leq k \leq 2(m-2) , k \text{ even}\\
        \mbZ \oplus H^{m-2}_\prim(Z, \mbZ) , & k = m-2 \\
        0, & \text{otherwise}
    \end{cases}
    \end{align*}
\end{corollary}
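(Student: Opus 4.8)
The plan is to recognize $Z$ as an instance of the very same geometric object studied above for $X$, namely a smooth even-dimensional complete intersection of two quadrics, only sitting inside a smaller projective space, and then to invoke the results already proved for $X$ with the dimension parameter shifted down by two.

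First I would check that $Z$ genuinely satisfies the hypotheses of the lemma producing \eqref{Xcoh}. The two linear forms $g_1$ and $g_2$ cut out a linear subspace $\mbP^m \subset \mbP^{m+2}$, and the quadrics $f_1, f_2$ restrict to two quadrics on this $\mbP^m$ whose common zero locus is exactly $Z = \{f_1 = f_2 = g_1 = g_2 = 0\}$. By the genericity of $g_1$ and $g_2$, already exploited in Lemma \ref{Zcutout} and in the proof of Lemma \ref{totalspace}, the variety $Z$ is a \emph{smooth} complete intersection of two quadrics in $\mbP^m$. Its dimension is $m-2$, which is even because $m$ is even and which satisfies $m-2 \geq 2$ because $m \geq 4$. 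Thus $Z$ fits the earlier lemma verbatim, with $m$ replaced by $m-2$ and the ambient $\mbP^{m+2}$ replaced by $\mbP^m$.

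With this identification in hand the two assertions follow by substitution. Applying the Euler-characteristic computation behind \eqref{primrk} with $m$ replaced by $m-2$ gives $\dim H^{m-2}_\prim(Z, \mbQ) = (m-2)+3 = m+1$; since $H^\star(Z, \mbZ)$ is torsion-free by Corollary \ref{Prop:NoTorsion}, the lattice $H^{m-2}_\prim(Z, \mbZ)$ is free of rank $m+1$, i.e. $\simeq \mbZ^{m+1}$. Substituting $m \mapsto m-2$ directly into \eqref{Xcoh} then yields the displayed description of $H^k(Z, \mbZ)$: the ambient classes $\omega_Z^{k/2}$ for even $0 \leq k \leq m-4$, the divided ambient classes $\omega_Z^{k/2}/4$ for even $m \leq k \leq 2(m-2)$, the middle group $\mbZ \oplus H^{m-2}_\prim(Z, \mbZ)$ in degree $k = m-2$, and zero otherwise.

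Since everything reduces to feeding the parameter $m-2$ into statements already established, there is no genuine obstacle. The only point demanding any care is the verification in the first step that $Z$ really is a smooth complete intersection of two quadrics of even dimension in the appropriate projective space, so that the prior lemma and the Betti-number count apply without modification; this is a matter of bookkeeping rather than a substantive difficulty.
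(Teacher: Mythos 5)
Your proposal is correct and matches the paper's (implicit) argument exactly: the paper offers no separate proof beyond the phrase ``since $Z$ is also a smooth complete intersection of two quadrics but in $\mbP^m$,'' i.e.\ it simply substitutes $m\mapsto m-2$ into the preceding lemma and the Betti-number count, which is precisely what you do. Your added verification that $Z$ is smooth of even dimension $m-2\geq 2$ (via the genericity of $g_1,g_2$) is the right bookkeeping step and is consistent with Lemma~\ref{Zcutout}.
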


We compute the cohomology of quadrics $X_1, X_2, D$, defined in the beginning of Section 2 we use Prepositions (\ref{Prop:AmbientCoh}), (\ref{Prop:IntPrimCoh}) and  
short exact sequences 
    \begin{align*}
        0 \to T_{X_i} \to T_{\mbP^{m+1}}\vert_{X_i} \to \mathcal{O}(2)\vert_{X_i} \to 0,\\
        0 \to T_{D} \to T_{\mbP^{m}}\vert_{D} \to \mathcal{O}(2)\vert_{D} \to 0.
    \end{align*} 
\begin{lemma}\label{X1X2Dcoh}
The only non-zero cohomology of $X_i$ are
    $$
    H^{m}_\prim(X_i, \mbZ) \simeq \mbZ
    $$
\begin{align*}
    H^k(X_i,\mbZ) = \begin{cases}
        \mbZ\left< \omega_{X_i}^{k/2} \right>, & 0\leq k \leq m-2 , k \text{ even}\\
        \mbZ\left< \omega_{X_i}^{k/2}/2 \right>, & m+2\leq k \leq 2m , k \text{ even};
       \end{cases}
    \end{align*}
    and the sublattice $\mbZ\oplus H^m_{\prim}(X_i,\mbZ) \subset H^m(X,\mbZ)$ is of index 2; 
    
    The only non-zero cohomology of $D$ are
    \begin{align*}
       H^k(D,\mbZ) = \begin{cases}
        \mbZ\left< \omega_{D}^{k/2} \right>, & 0\leq k \leq m-2 , k \text{ even}\\
        \mbZ\left< \omega_{D}^{k/2}/2 \right>, & m\leq k \leq 2m-2 , k \text{ even}.
       \end{cases}
\end{align*}
In particular, $$
    H^{m-1}_\prim(D, \mbZ) = 0.
    $$
\end{lemma}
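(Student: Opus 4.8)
The plan is to recognize both $X_i$ and $D$ as smooth quadric hypersurfaces sitting in smaller projective spaces, and then feed them into the structure results already established. Indeed, the linear equation $g_i=0$ cuts $\mbP^{m+2}$ down to a $\mbP^{m+1}$ inside which $X_i=\{f_1=0\}$ is a smooth quadric hypersurface of dimension $m$ and degree $2$; likewise the two linear equations $g_1=g_2=0$ restrict us to a $\mbP^m$ in which $D=\{f_1=0\}$ is a smooth quadric of dimension $m-1$ and degree $2$. With these identifications the shape of the cohomology is immediate from Proposition \ref{Prop:AmbientCoh} together with Corollary \ref{Prop:NoTorsion}: everything is concentrated in even degrees and generated by powers of $\omega_{X_i}$ (resp. $\omega_D$), with the generator divided by the product of the degrees, namely $2$, in every degree above the middle one.

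For $X_i$ the middle degree $m$ is even, so I would invoke Proposition \ref{Prop:PrimitiveCoh} to split $H^m(X_i,\mbQ)=\mbQ\langle\omega_{X_i}^{m/2}\rangle\oplus H^m_{\prim}(X_i,\mbQ)$, and then Proposition \ref{Prop:IntPrimCoh} with $d=2$ to conclude that the sublattice $\mbZ\langle\omega_{X_i}^{m/2}\rangle\oplus H^m_{\prim}(X_i,\mbZ)$ has index $2$ in $H^m(X_i,\mbZ)$, which is exactly the asserted index statement. To pin down that $H^m_{\prim}(X_i,\mbZ)\simeq\mbZ$, I would compute its rank through the Euler characteristic: the normal bundle sequence $0\to T_{X_i}\to T_{\mbP^{m+1}}|_{X_i}\to\mathcal{O}(2)|_{X_i}\to 0$ yields $c(T_{X_i})=(1+\omega_{X_i})^{m+2}/(1+2\omega_{X_i})$, and evaluating $\int_{X_i}c_m(T_{X_i})$ gives $\chi(X_i)=m+2$. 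Subtracting the $m+1$ ambient even-degree classes leaves $\dim H^m_{\prim}(X_i,\mbQ)=1$, and freeness of the lattice follows from Corollary \ref{Prop:NoTorsion}.

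For $D$ the essential observation, and the cleanest part of the whole argument, is that its dimension $m-1$ is \emph{odd}. Hence the middle cohomology $H^{m-1}(D,\mbQ)$ is entirely primitive, and an analogous Euler characteristic computation with $c(T_D)=(1+\omega_D)^{m+1}/(1+2\omega_D)$ gives $\chi(D)=m$, which is precisely the number of ambient even-degree classes in degrees $0,2,\dots,2m-2$. This forces the odd middle Betti number to vanish, so $H^{m-1}_{\prim}(D,\mbQ)=0$, and since $H^{m-1}(D,\mbZ)$ is torsion-free it vanishes integrally as well. I do not expect any genuine obstacle here: once the dimension and degree of each variety are correctly identified, the computation is entirely mechanical, and the parity of $\dim D$ alone delivers the final vanishing $H^{m-1}_{\prim}(D,\mbZ)=0$.
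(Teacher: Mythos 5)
Your argument is correct and follows exactly the route the paper indicates (the lemma is stated there without a written-out proof, only with a pointer to Proposition \ref{Prop:AmbientCoh}, Proposition \ref{Prop:IntPrimCoh} and the two normal-bundle exact sequences): you identify $X_i$ and $D$ as smooth quadrics in $\mbP^{m+1}$ and $\mbP^m$, apply the general complete-intersection results with $d=2$, and pin down the primitive ranks via the Euler characteristics $\chi(X_i)=m+2$ and $\chi(D)=m$, with the parity of $\dim D$ forcing $H^{m-1}_{\prim}(D,\mbZ)=0$. No gaps; this is the intended proof.
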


We denote by $\omega_{X_1}, \omega_{X_2}, \omega_{Z}$ the restrictions to $X_1, X_2$ and $Z$ of the hyperplane class of the special fiber $\widehat{X}_0\subset \mbP^{m+2}$ of the map $\widehat{\pi}$. Denote the generator of $H^m_{\prim}(X_1, \mbZ)$ by $\beta$ and the generator of $H^m_{\prim}(X_2, \mbZ)$ by $\theta$ and the generators of $H^{m-2}_\prim(Z, \mbZ)$ by $Z_1,\dots, Z_{m+1}$.

For convenience, we use the shorthands:
\begin{align*}
    \Hcal_1&:=\omega_{X_1}^{\frac{m}{2}},\\
    \Hcal_2&:=\pi^*\omega_{X_2}^{\frac{m}{2}},\\
    \Hcal_Z&:=j_*\pi_E^* \omega_Z^{\frac{m}{2}-1},\\
    \Zcal_i&:=j_*\pi_E^* Z_i, \hspace{2em} 1\leq i\leq m+1\\
    \Theta&:=\pi^*\theta,
\end{align*}

where the maps involved are from a cartesian diagram of a  blow-up 
\begin{center}
\begin{tikzcd}
E \arrow[d, "\pi_E", two heads] 
\arrow[r, "j"', hook] & \widetilde{X_2} 
\arrow[d, "\pi", two heads] \\
Z \arrow[r, "i", hook]                                & X_2                          
\end{tikzcd},
\end{center}
where $E = \mbP(N_{X_2/Z})$ is an exceptional divisor.

For needs of section (\ref{section_restriction}) we need the middle cohomology of the component $\widetilde{X}_2$ (\ref{components}) of the special fiber of $p$.

\begin{lemma}\label{blowup_HS}
The map 
\begin{align*}
     \pi^* + j_*\pi_E^*: H^m(X_2, \mbZ)[\frac{1}{2}]\oplus H^{m-2}(Z, \mbZ) &\to H^m(\widetilde{X}_2, \mbZ)[\frac{1}{2}]\\
(\omega^{\frac{m}{2}}_{X_2}, 0)&\mapsto \Hcal_2\\
(\theta,0)&\mapsto\Theta\\
(0, \omega^{\frac{m}{2}-1}_Z)&\mapsto\Hcal_Z\\
(0, Z_i) &\mapsto\Zcal_i, \hspace{2em} 1\leq i\leq m+1.
\end{align*}

is an isomorphism of $\mbZ$-Hodge structures.
\end{lemma}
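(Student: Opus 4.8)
The plan is to identify the displayed map with the blow-up formula for integral cohomology in codimension two, and then to invert $2$ in order to replace the integral lattices by the explicit ambient-and-primitive generators.

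I would first fix the geometry. Since $\dim X_2 = m$ and $\dim Z = m-2$, the smooth center $Z$ has codimension $2$ in the smooth variety $X_2$; hence the exceptional divisor $E = \mbP(N_{X_2/Z})$ is a $\mbP^1$-bundle $\pi_E : E \to Z$, included in $\widetilde{X}_2$ by $j$, and the blow-up square yields $\pi \circ j = i \circ \pi_E$. Setting $\xi = c_1(\mathcal{O}_E(1))$, the projective bundle formula reads $H^*(E,\mbZ) = \pi_E^* H^*(Z,\mbZ) \oplus \xi\, \pi_E^* H^*(Z,\mbZ)$.

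Next I would establish the integral blow-up formula in this codimension,
\begin{align*}
H^m(\widetilde{X}_2, \mbZ) = \pi^* H^m(X_2, \mbZ) \;\oplus\; j_*\pi_E^* H^{m-2}(Z, \mbZ),
\end{align*}
as an isomorphism of Hodge structures, the second summand being $H^{m-2}(Z,\mbZ)$ Tate-twisted by $(-1)$. The relation $\pi_* \pi^* = \mathrm{id}$ exhibits $\pi^*$ as a split injection onto a sub-Hodge-structure. For the complement, the self-intersection identity $j^* j_* \alpha = c_1(N_{E/\widetilde{X}_2}) \cup \alpha = -\xi \cup \alpha$ (using $N_{E/\widetilde{X}_2} = \mathcal{O}_E(-1)$) together with the projective bundle formula shows that $j_* \pi_E^*$ is injective with image complementary to $\pi^* H^m(X_2,\mbZ)$; since $Z$ has codimension two, no $\xi$-term is needed and a single copy of $H^{m-2}(Z)$ appears. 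As $\pi^*$ and $\pi_E^*$ preserve Hodge type and $j_*$ is a morphism of Hodge structures of bidegree $(1,1)$, the resulting isomorphism respects the Hodge structures.

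Finally I would localize at $2$ and read off the generators. Over $\mbZ[\frac{1}{2}]$ the index obstructions recorded earlier become invisible: by Lemma \ref{X1X2Dcoh} the classes $\omega_{X_2}^{m/2}$ and $\theta$ generate $H^m(X_2,\mbZ)$ up to index $2$, hence form a basis of $H^m(X_2,\mbZ)[\frac{1}{2}]$; by Corollary \ref{Zprim} and the index-$4$ statement for the two-quadric complete intersection $Z$ (note $4 = 2^2$), the classes $\omega_Z^{m/2-1}, Z_1, \dots, Z_{m+1}$ form a basis of $H^{m-2}(Z,\mbZ)[\frac{1}{2}]$. Tensoring the integral isomorphism with $\mbZ[\frac{1}{2}]$ and pushing these bases through $\pi^*$ and $j_*\pi_E^*$ produces precisely the classes $\Hcal_2, \Theta, \Hcal_Z, \Zcal_1, \dots, \Zcal_{m+1}$ of the statement, while a rank count ($2 + (m+2) = m+4$ on each side) confirms bijectivity. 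The hard part will not be the blow-up formula itself, which is classical, but the integral bookkeeping around it: one must verify that inverting $2$ is at once necessary and sufficient to turn the chosen ambient-and-primitive generators into honest bases on both the $X_2$- and $Z$-sides, and confirm Hodge-theoretic compatibility through the Tate twist on the $H^{m-2}(Z)$-factor.
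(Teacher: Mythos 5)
Your proposal is correct and follows essentially the same route as the paper, which simply refers to Voisin's classical blow-up formula for the decomposition $H^m(\widetilde{X}_2,\mbZ)\cong \pi^*H^m(X_2,\mbZ)\oplus j_*\pi_E^*H^{m-2}(Z,\mbZ)$; you have merely spelled out the standard ingredients (projective bundle formula, self-intersection identity, splitting by $\pi_*$) together with the localization-at-$2$ bookkeeping needed to turn the ambient-plus-primitive generators into bases. No gaps.
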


See (\cite{V03}) for a proof.


\section{Monodromy of complete intersection}
In this section we review the description of monodromy groups of general smooth  complete intersection. We show the monodromy group of the restricted family $p:\widetilde{X}\to \mbA^1$ from Section (\ref{smooth_family}) is trivial.

 Let $$U\subset \prod_i\mbP(H^0(\mbP^{m+k},\mathcal{O}_{\mbP^{m+k}}(d_i)))$$ be an open subset selecting smooth complete intersections of multi-degree $(d_1, \dots, d_k)$ in $\mbP^{m+k}$.
Fix $u\in U$ and denote by $\pi_1(U,u)$ the fundamental group of $U$ based at $u$. Denote by $X$ the corresponding complete intersection.

By Lefschetz hyperplane theorem the restriction map $$H^i(\mbP^{m+k},\mbQ)\to H^i(X,\mbQ)$$ is isomorphism for $i\neq m, ~1\leq i\leq 2m$. By Lefschetz decomposition theorem, $$H^m(X,\mbQ)=H^m(\mbP^{m+k},\mbQ)\oplus H^m_{\prim}(X,\mbQ)$$.

Consider a family 
\[\begin{tikzcd}
	{\mathcal{X}} & {U\times \mbP^{m+k}} \\
	U
	\arrow[hook, from=1-1, to=1-2]
	\arrow["p"', from=1-1, to=2-1]
\end{tikzcd}\]
induced by projection to $U$ and formed by collections $(f_1, \dots, f_k,x)$, such that $f_1(x)=\dots = f_k(x)=0$. The family is smooth and proper and the sheaf $R^m p_* \mbQ$ is a local system determined by the action of $\pi_1(U,u)$ by linear automorphisms on its fiber $H^m(X,\mbQ)$ over $u$
\begin{align*}
    \pi_1(U,u)\to \Aut(H^m(X,\mbQ)).
\end{align*}
The group $\pi_1(U,u)$ acts on $H^m_\prim(X,\mbQ)$. The (algebraic) \textit{monodromy group} $G$ is, by definition, the Zariski
closure of the image of $\pi_1(U,u)$ in the complex algebraic group  $\Aut(H^m(X,\mbQ)\otimes\mbC)$.

Recall explicit description of  the monodromy group $G$
depending on the parity of the dimension $m$ of $X$. If the dimension $m$ of $X$
is odd, then the intersection pairing defines a skew-symmetric non-degenerate bilinear form $\eta$ on $H^m(X,\mbQ)$.  Moreover, as the odd cohomology of $\mbP^{m+k}$
is zero,
the entire cohomology $H^m(X,\mbQ)$ is primitive. Let $\mathrm{Sp(V)}$ be the complex symplectic
group of automorphisms of
$V:=H^m_{\prim}(X,\mbQ)\otimes \mbC$
endowed with $\eta$. As the monodromy preserves $\eta$, we necessarily have $G \subset \mathrm{Sp}(V)$. 
We have 
\begin{theorem}[\cite{ABPZ}, Prop 4.1] 
If $\dim X = m$ is 
 odd, then the monodromy group $G$ acting
on the primitive cohomology $V$ is as large as possible: $G=\mathrm{Sp}(V).$
\end{theorem}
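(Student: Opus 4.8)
The plan is to reduce the computation of the monodromy group $G$ to Picard--Lefschetz theory. First I would restrict the universal family over $U$ to a generic line $\ell\subset U$, that is, to a Lefschetz pencil of complete intersections, chosen so that the total space acquires only ordinary double points over the finitely many points of $\ell\cap\Delta$, where $\Delta$ is the discriminant. The monodromy of this pencil is a subgroup of $G$, and by the Picard--Lefschetz formula the local monodromy around each critical value acts on $H^m(X,\mbQ)$ by
\begin{align*}
x \longmapsto x \pm \langle x, \delta_i\rangle\,\delta_i,
\end{align*}
where $\delta_i$ is the associated vanishing cycle. Since $m$ is odd the form $\eta$ is skew-symmetric, so $\langle\delta_i,\delta_i\rangle=0$ and each such transformation is a symplectic \emph{transvection}; in particular it is unipotent of infinite order, and the image lies in $\mathrm{Sp}(V)$, recovering the inclusion $G\subset\mathrm{Sp}(V)$ already noted before the statement.

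The heart of the matter is to show these transvections fill out all of $\mathrm{Sp}(V)$, for which I would establish two geometric facts. First, the vanishing cycles $\delta_i$ span the whole vanishing (equivalently primitive) cohomology $V$; this is the classical statement that, for a Lefschetz pencil, the primitive cohomology is generated by vanishing cycles. Second, the monodromy permutes the lines $\mbQ\delta_i$ transitively: a loop in $U\setminus\Delta$ can drag any one node to any other, conjugating the corresponding transvections, and this transitivity is governed by the irreducibility of the dual variety / discriminant hypersurface $\Delta$ of the relevant linear system. Together these also yield that the vanishing-cohomology representation of $\pi_1(U\setminus\Delta)$ is irreducible.

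With these inputs in hand I would invoke a group-theoretic lemma in the spirit of the classification of irreducible groups generated by transvections: the Zariski closure of a subgroup of $\mathrm{Sp}(V)$ generated by symplectic transvections whose directions span $V$ and which acts irreducibly is the full group $\mathrm{Sp}(V)$. Irreducibility of the monodromy representation supplies the hypothesis, and taking the Zariski closure of the image of $\pi_1(U,u)$ then gives $G=\mathrm{Sp}(V)$. Note that the unipotence of the transvections already forces $G$ to be positive-dimensional, in sharp contrast with the finite reflection groups that arise in the even-dimensional two-quadrics case studied in the rest of the paper.

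The main obstacle I anticipate is the transitivity step, i.e.\ proving that the monodromy permutes the vanishing cycles transitively; this rests on the irreducibility of the discriminant for the universal family of smooth complete intersections, which is the genuinely geometric input, the Picard--Lefschetz computation and the final group theory being comparatively formal. One must also keep careful track of the parity of $m$: it is precisely the vanishing of the self-intersection $\langle\delta,\delta\rangle$ for skew-symmetric $\eta$ that produces transvections and hence the symplectic conclusion, whereas in the even case the sign in the Picard--Lefschetz formula and the nonzero self-intersection instead produce reflections and point toward the orthogonal group.
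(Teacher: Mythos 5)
Your proposal is correct and is essentially the argument behind the result as it appears in the literature: the paper itself states this theorem without proof, quoting \cite{ABPZ} (which in turn rests on Deligne--Katz \cite{DK73}, XIX), and the proof there is exactly your chain of a Lefschetz pencil, the Picard--Lefschetz transvections $x \mapsto x \pm \langle x,\delta_i\rangle\delta_i$, the generation of $V$ by vanishing cycles, conjugacy of vanishing cycles via irreducibility of the discriminant, and the Kazhdan--Margulis-type classification of irreducible subgroups generated by symplectic transvections. The only point worth flagging is that you should note the vanishing cycles are nonzero (equivalently $V \neq 0$), which for odd $m$ cannot be read off from $\langle\delta,\delta\rangle$ but follows from the nontriviality of the primitive cohomology of the complete intersections in question.
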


When $m$ is even, intersection pairing  defines a symmetric symmetric non-degenerate bilinear form $\eta$ on $H^m(X,\mbQ)$. The primitive cohomology $H^m_\prim(X,\mbQ)$ is orthogonal wrt $\eta$ to the image of the restriction map $H^m(\mbP^{m+k},\mbQ)\to H^m(X,\mbQ)$. Denote by the same $\eta$ the restriction to  $H^m_\prim(X,\mbQ)$ of the symmetric non-degenerate bilinear form. Let $O(V)$ be the group of orthogonal transformations of $V:=H^m_\prim(X,\mbQ)\otimes  \mbC$.
As the monodromy group preserves $\eta$, we necessarily have $G \subset \mathrm{O}(V)$. 

We have 
\begin{theorem}[\cite{ABPZ}, Prop 4.2; \cite{DK73}, XIX, §5.2-5.3] 
If $\dim X = m$ is 
 even, then the monodromy group $G$ acting
on the primitive cohomology $V$ is as large as possible: $G=\mathrm{O}(V)$, except if $X$ is
a cubic surface or a complete intersection of two quadrics. Furthermore,
\begin{itemize}
    \item [$(i)$] if $X$ is a cubic surface, then $G$ = $W(E_6)$,
\item [$(ii)$] if $X$ is a complete intersection of two quadrics, then $G$ = $W(D_{m+3})$,
\end{itemize}
where $W(R)$ denotes the Weyl group of the root system $R$. 
\end{theorem}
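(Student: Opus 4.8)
The plan is to compute $G$ by Picard-Lefschetz theory. Realising $X$ as a hyperplane section after a Veronese re-embedding, one chooses a generic Lefschetz pencil; its total space acquires only finitely many nodal degenerations, and to each critical value is attached a vanishing cycle $\delta\in H^m_\prim(X,\mbZ)$. The Lefschetz theory of complete intersections guarantees that these vanishing cycles span $V$ and form a single orbit under $\pi_1(U,u)$. As $m$ is even the intersection form is symmetric, each $\delta$ has $\langle\delta,\delta\rangle=(-1)^{m/2}\,2$, and the local monodromy around each critical value is the orthogonal reflection $T_\delta(x)=x\mp\langle x,\delta\rangle\,\delta$. Consequently $G$ is precisely the subgroup of $\mathrm{O}(V)$ generated by the reflections in the vanishing cycles; the inclusion $G\subseteq\mathrm{O}(V)$ is then automatic, and the problem reduces to identifying the root system that the vanishing cycles span.

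The first structural point is that all vanishing cycles are conjugate under the monodromy, hence have equal self-intersection, so the roots they span have equal length and the root system is simply laced. For a generic even-dimensional complete intersection the span carries an indefinite form and the reflection group is infinite; the density theorem of Deligne and Katz then shows that its Zariski closure is all of $\mathrm{O}(V)$, giving $G=\mathrm{O}(V)$. The exceptional feature of the cubic surface and of the complete intersection of two quadrics is that here the intersection form restricted to $V$ is, up to sign, definite: since a finite group preserving both a definite form and the intersection form forces the latter to be definite, this is exactly the condition for the reflection group to be finite, in which case the vanishing cycles form a genuine finite root system of type $A$, $D$, or $E$ and $G$ is its Weyl group.

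For the two-quadrics case I would replace the generic pencil by the pencil of quadrics itself. In the diagonal model $f_1=\sum_{i=0}^{m+2}x_i^2$, $f_2=\sum_{i=0}^{m+2}\lambda_i x_i^2$, the pencil $\{a f_1+b f_2=0\}$ containing $X$ has exactly $m+3$ singular members, the rank-$(m+2)$ quadric cones over the points $(a:b)=(\lambda_i:-1)$. The monodromy is then driven by the configuration space of the $m+3$ unordered points $\lambda_0,\dots,\lambda_{m+2}\in\mbP^1$, enriched by the double cover of $\mbP^1$ recording the two rulings of each smooth quadric in the pencil. Colliding two of the points, $\lambda_i\to\lambda_j$, produces a node on $X$ and a vanishing cycle, while interchanging the sheets of the double cover produces sign flips; matching these against the geometric classes $\zeta_0,\dots,\zeta_{m+2}$ and the intersection numbers (\ref{intform})--(\ref{intform2}) recalled above, I would identify the resulting reflections with the signed permutations of $m+3$ letters having an even number of sign changes. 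This is exactly $W(D_{m+3})=(\mbZ/2)^{m+2}\rtimes S_{m+3}$, so $G=W(D_{m+3})$; the case $m=2$, where $X$ is a quartic del Pezzo surface and $V$ is the $D_5$ lattice spanned by its sixteen lines, is a useful consistency check.

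I expect the main obstacle to be twofold. First one must prove that the intersection form on $V$ is definite for two quadrics --- this is the genuinely exceptional input, since it is what makes $G$ finite at all, and it does not follow from any soft Lefschetz-theoretic estimate. Second, among the finite simply-laced systems of rank $m+3$ one must separate $D_{m+3}$ from $A_{m+3}$, which requires counting the vanishing cycles and computing their pairwise intersections explicitly; this is where the detailed geometry of the linear subspaces $S_i$ and Reid's evaluation of the intersection form (\ref{intform})--(\ref{intform2}) are indispensable. The cubic-surface exception follows the same template, the definite lattice being the $E_6$ root lattice spanned by the differences of the twenty-seven lines, with $G=W(E_6)$.
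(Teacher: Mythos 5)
The paper does not prove this theorem: it is quoted from \cite{ABPZ} (Prop.~4.2) and \cite{DK73} (Exp.~XIX, \S 5.2--5.3), so there is no internal argument to compare yours against. Your sketch is, in outline, exactly the argument of those sources: Picard--Lefschetz reflections in a single monodromy orbit of vanishing cycles spanning $V$, the Deligne--Katz dichotomy (the reflection group generated by reflections in one orbit of equal-length vectors spanning $V$ is either finite, in which case the vanishing cycles form a simply laced root system of type $A$, $D$ or $E$, or Zariski dense in $\mathrm{O}(V)$), and, for two quadrics, the reduction to the pencil $\{af_1+bf_2\}$ with its $m+3$ singular members. Two caveats. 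First, the two steps you yourself flag as ``obstacles'' --- that the form on $V$ is definite up to sign (equivalently, by Hodge--Riemann, that $H^m_\prim$ is entirely of type $(\tfrac m2,\tfrac m2)$), and that the vanishing-cycle configuration is $D_{m+3}$ rather than $A_{m+3}$ --- are not side issues but the entire content of the exceptional case; as written you defer them to Reid's computations (\ref{intform})--(\ref{intform2}) rather than establishing them, so what you have is a correct strategy rather than a complete proof. (The finiteness argument also tacitly uses that the monodromy representation on $V$ is irreducible, which you should cite or deduce from the single-orbit property, since otherwise ``finite group preserving a definite form'' does not force the intersection form itself to be definite.) Second, a small inaccuracy: for even $m$ the smooth members of the pencil are \emph{odd}-dimensional quadrics in $\mbP^{m+2}$ and carry a single ruling of maximal linear subspaces; the two-ruling phenomenon responsible for the sign flips lives on the even-dimensional quadric over which each of the $m+3$ singular members is a cone, so the relevant double structure is attached to the points $\lambda_i$, not to the generic member. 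With these repairs the sketch matches the cited proofs.
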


Recall the proper family $p: \widetilde{X}\to \mbA^1$ with a smooth total space from Section (\ref{smooth_family}). Restricted to the complement to a special fiber, it is a smooth proper map $p^{\prime}: \widetilde{X}^{\prime}\to \mbC^*$ between smooth varieties, with fiber $X$  

\begin{center}
\begin{tikzcd}
  \widetilde{X}^{\prime} \arrow[r] \arrow[d, "p^{\prime}"]
    &  \widetilde{X} \arrow[d, "p"] \\
  \mbC^* \arrow[r]
& \mbA^{1} \end{tikzcd}.
\end{center}

We call the \textit{monodromy group of the family} $p^\prime$ the image of the action of $\pi_1(\mbC^*,t)=\mbZ$ in the complex algebraic group $\Aut(H^m_\prim(X,\mbQ)\otimes \mbC)$. Denote it by $\Gamma$.

We show
\begin{lemma}
The monodromy group $\Gamma$ is trivial.    
\end{lemma}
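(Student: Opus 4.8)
The plan is to exhibit the generator $T$ of $\Gamma$ as an automorphism that is simultaneously \emph{unipotent} and of \emph{finite order}; since over $\mbC$ the only such operator is the identity, this forces $\Gamma = \{1\}$. Here $T$ denotes the image in $\Aut(H^m_\prim(X,\mbQ)\otimes\mbC)$ of a small positive loop around $t=0$, which generates $\pi_1(\mbC^*,t)=\mbZ$ and hence all of $\Gamma$.

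\textbf{Unipotence.} By Lemma \ref{totalspace} the total space $\widetilde{X}$ is smooth and the special fiber $\widetilde{X}_0 = X_1 \cup_D \widetilde{X}_2$ is a reduced divisor with simple normal crossings, being two smooth components meeting transversally along the smooth divisor $D$. In other words, $p$ is a semistable degeneration in a neighborhood of $t=0$. By the monodromy theorem for such degenerations (the local monodromy of a semistable family is unipotent; see e.g. the Clemens--Schmid exact sequence and limit mixed Hodge structure theory), the operator $T$ acting on $H^m(X,\mbQ)\otimes\mbC$ is unipotent, and so is its restriction to the invariant subspace $H^m_\prim(X,\mbQ)\otimes\mbC$ (the monodromy preserves the ambient class and the intersection form, hence its primitive orthogonal complement).

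\textbf{Finite order.} The fibers $p^{-1}(t)=\{f_1 = tf_2+g_1g_2 = 0\}$ for $t\neq 0$ small are smooth complete intersections of two quadrics in $\mbP^{m+2}$. Thus the restriction of the family to a small punctured disk around $0$ is classified by a map to the parameter space $U$ of such complete intersections, inducing a homomorphism $\pi_1(\mbC^*,t)\to\pi_1(U,u)$ compatible with the monodromy representations. Consequently $\Gamma$ is contained in the image of $\pi_1(U,u)$, which for a complete intersection of two quadrics is the finite Weyl group $W(D_{m+3})$ by the theorem of Arg\"uz--Bousseau--Pandharipande--Zvonkine and Deligne--Katz quoted above. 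In particular $T$, being an element of a finite group, has finite order.

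\textbf{Conclusion and main difficulty.} A finite-order unipotent automorphism of a complex vector space has minimal polynomial dividing both $(x-1)^N$ and $x^k-1$; since the latter has distinct roots over $\mbC$, the minimal polynomial is $x-1$, so $T$ is the identity and $\Gamma$ is trivial. The only substantive input is the unipotence statement: it rests on verifying that $p$ is semistable near $t=0$, i.e. that $\widetilde{X}_0$ is reduced with simple normal crossings. Reducedness follows because near a generic point of each component the coordinate $t$ pulls back to a local equation vanishing to order one, so that $\mathrm{div}(t)=X_1+\widetilde{X}_2$; the smoothness and transversality are exactly the content of Lemma \ref{totalspace}. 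I expect correctly invoking the semistable monodromy theorem to be the main point to get right, whereas the finite-order step is immediate once the inclusion $\Gamma \subseteq W(D_{m+3})$ is recognized.
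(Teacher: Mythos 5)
Your proof is correct and its skeleton coincides with the paper's: both arguments establish that the generator of $\Gamma$ is simultaneously unipotent and of finite order, and conclude that a finite-order unipotent operator over $\mbC$ is the identity; both obtain unipotence from the semistable structure of the degeneration (the paper cites Clemens' theorem directly, using exactly the smoothness/transversality furnished by Lemma \ref{totalspace}, which is also what you invoke). Where you genuinely diverge is the finiteness step. You embed $\Gamma$ into the monodromy group of the universal family of $(2,2)$ complete intersections via the classifying map of the punctured disk into the parameter space $U$, and then quote the Deligne--Katz/ABPZ classification $G = W(D_{m+3})$; this is clean and conceptual but leans on a deep external result, and it also uses the observation (worth stating) that a group whose Zariski closure is finite is itself finite. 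The paper instead argues from first principles: $\Gamma$ preserves the integral lattice, hence is discrete in $\mathrm{GL}$, and it preserves the intersection form on $H^m_\prim(X,\mbR)$, which for even-dimensional intersections of two quadrics is \emph{definite}, so $\Gamma$ lies in a compact orthogonal group; discrete plus compact gives finite. The paper's route is more self-contained and isolates the special feature of this family (definiteness of the primitive intersection form) that makes the monodromy finite, whereas yours makes the containment $\Gamma\subseteq W(D_{m+3})$ explicit, which is conceptually illuminating but not logically necessary. Your brief verification that the special fiber is reduced (so that the degeneration is genuinely semistable and Clemens' theorem applies) is a point the paper passes over silently, and is a welcome addition.
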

 \begin{proof}
      The triviality follows from the following observations. First, the monodromy action
      preserves the integer classes $\im (H^m_\prim(X, \mbZ)\to H^m_\prim(X, \mbZ)\otimes \mbC)$.
Choosing a basis in $H^m(X, \mbZ)$, we can identify $H^m(X, \mbZ)$ with a standard lattice $\Lambda$ and hence the group $\Gamma$ is a subgroup of a discrete group $GL(\Lambda)$. 

Second, this action preserves the non-degenerate symmetric bilinear form on $H^{m}_{\prim}(X, \mbQ)$, the intersection pairing, since it is the same at each fiber. Then $\Gamma$ sits inside the orthogonal group $O(H^{m}_{\prim}(X, \mbQ))$, which is open dense inside $O(H^{m}_{\prim}(X, \mbR))$(in analytic topology).

Intersection pairing on $H^{m}_{\prim}(X, \mbQ)$ is positive definite [\cite{DK73}, XIX,
Proposition 3.3], hence the group $O(H^{m}_{\prim}(X, \mbR))$ is compact. 

Being discrete and compact, the monodromy group $\Gamma$ of is finite.

Moreover, Clemens proved [\cite{Cle77}, Theorem 7.36] that the elements $g$ of $\Gamma$ are of the form $g = e^{A_g}$, where $A_g$ are nilpotent complex-valued matrices. On the other hand, from the finiteness of $\Gamma$, each $g$ is of finite order $e^{n_gA_g} = 1$, $n_g \in \mathbb{N}$ which implies $e^{A_g} = 1$. Indeed, since the sum of nilpotent commuting matrices is nilpotent, 
\begin{align*}
   e^{n_gA_g} =  (1+ \widetilde{A}_g)^{n_g} = 1
\end{align*}
for some nilpotent $\widetilde{A}_g$. The last equality can only happen if $\widetilde{A}_g = 0$. Then, 
\begin{align*}
e^{A_g} = 1 + \widetilde{A}_g = 1.
\end{align*}
\end{proof}

\begin{corollary}\label{inv}
    All the classes in $H^{m}_{\prim}(X, \mbQ)$ are monodromy invariant. 
\end{corollary}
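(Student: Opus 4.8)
The plan is to read off the corollary as a formal consequence of the preceding lemma. By definition the monodromy group $\Gamma$ is the image of the representation
$$\pi_1(\mbC^*, t) \to \Aut\bigl(H^m_\prim(X, \mbQ) \otimes \mbC\bigr),$$
and to say that a class $\alpha \in H^m_\prim(X,\mbQ)$ is monodromy invariant means precisely that $g\cdot\alpha = \alpha$ for every $g \in \Gamma$. So the first (and essentially only) step is to invoke the previous lemma, which gives $\Gamma = \{1\}$.

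Once $\Gamma$ is trivial, every loop in $\mbC^*$ acts by the identity automorphism on $H^m_\prim(X,\mbQ)\otimes\mbC$, hence the equality $g\cdot\alpha = \alpha$ holds for the unique element $g\in\Gamma$ and for all primitive classes $\alpha$. Thus every class in $H^m_\prim(X,\mbQ)$ is fixed, which is exactly the assertion of the corollary. There is no computational content to isolate at this level: a trivial group fixes everything.

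Accordingly, I do not expect any genuine obstacle in the corollary itself, since the entire difficulty is concentrated in the lemma that precedes it. Were I to organize that lemma's proof, I would build it around the three inputs already recorded in the excerpt: Clemens' theorem writing each $g\in\Gamma$ as $e^{A_g}$ with $A_g$ nilpotent; the discreteness-plus-compactness argument (the action preserves an integral lattice and a positive-definite symmetric form, so $\Gamma$ lands in a discrete subgroup of a compact orthogonal group and is therefore finite); and the elementary fact that a nilpotent $A_g$ with $e^{A_g}$ of finite order must vanish. Granting the lemma, the corollary follows immediately, and the point worth flagging for the sequel is that this invariance is what lets one identify primitive cohomology classes consistently across the fibers of $p'$, so that the specific basis tracked through the degeneration is well defined.
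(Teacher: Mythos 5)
Your proof is correct and matches the paper's (implicit) reasoning exactly: the corollary is an immediate consequence of the preceding lemma that $\Gamma$ is trivial, since a trivial group fixes every class. No further comment is needed.
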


\section{The restriction map}\label{section_restriction}

Recall a proper family with smooth total space $\widetilde{X}\xrightarrow{p} \mbC$ from Section (\ref{smooth_family}).
The embedding of generic fiber $X$ into $\widetilde{X}$ induces the 
restriction map
\begin{align}\label{restriction}
    H^{\star}(\widetilde{X}, \mbQ) \to H^{\star}(X, \mbQ).
\end{align}

\begin{lemma}
    The restriction map (\ref{restriction})is surjective
\end{lemma}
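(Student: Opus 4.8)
The plan is to treat the ambient and primitive parts of $H^\star(X,\mbQ)$ separately. By Proposition \ref{Prop:AmbientCoh}, every group $H^k(X,\mbQ)$ with $k\neq m$ is spanned by a power of the hyperplane class $\omega$, and in degree $m$ the only non-primitive summand is $\mbQ\langle\omega^{m/2}\rangle$; so it suffices to realise the powers of $\omega$ and the primitive classes $H^m_\prim(X,\mbQ)$ as restrictions of classes on $\widetilde{X}$. The powers of $\omega$ are immediate: the composite $\widetilde{X}\to\widehat{X}\to\mbP^{m+2}$ pulls the ambient hyperplane class back to a class on $\widetilde{X}$ whose restriction to the fibre $X$ is $\omega$, so its powers restrict to the $\omega^{j}$. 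This already produces all of $H^k(X,\mbQ)$ for $k\neq m$ together with $\mbQ\langle\omega^{m/2}\rangle\subset H^m(X,\mbQ)$, reducing the whole problem to showing that $H^m_\prim(X,\mbQ)$ lies in the image.

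For the primitive part I would invoke the local invariant cycle theorem. Fix a small disk $\Delta\subset\mbA^1$ around $0$ and set $\widetilde{X}_\Delta:=p^{-1}(\Delta)$. Since $p$ is proper and smooth over $\mbC^*$, Ehresmann's theorem trivialises the family over the annular region $\{|t|\geq r\}$, so the radial deformation retraction of $\mbA^1$ onto $\overline{\Delta}$ lifts and yields $H^\star(\widetilde{X},\mbQ)\cong H^\star(\widetilde{X}_\Delta,\mbQ)$; it is therefore equivalent to study the restriction $H^\star(\widetilde{X}_\Delta,\mbQ)\to H^\star(X,\mbQ)$. The total space $\widetilde{X}_\Delta$ is smooth by Lemma \ref{totalspace}, the map to $\Delta$ is proper and smooth over $\Delta^*$, so the local invariant cycle theorem applies: the image of $H^m(\widetilde{X}_\Delta,\mbQ)\to H^m(X,\mbQ)$ equals the subspace of classes invariant under the local monodromy $T$ around $t=0$.

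It then remains to identify the invariants. The operator $T$ is the image of a generator of $\pi_1(\Delta^*)\to\pi_1(\mbC^*)$, hence lies in the monodromy group $\Gamma$ of the family $p'$, which was shown to be trivial in the preceding lemma. Consequently $T$ acts trivially on $H^m(X,\mbQ)$, and by Corollary \ref{inv} every primitive class is invariant. Thus the image of the restriction contains all of $H^m_\prim(X,\mbQ)$; combined with the ambient classes produced above, the restriction map (\ref{restriction}) is surjective in every degree.

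The main obstacle is the careful application of the local invariant cycle theorem: one must check that its hypotheses hold—smoothness of the total space (Lemma \ref{totalspace}), properness of $p$, and smoothness over the punctured disk—and that the relevant monodromy operator is exactly the (trivial) generator of $\Gamma$. The passage from the family over $\mbA^1$ to the local model over $\Delta$, handled by the Ehresmann retraction above, is the one geometric point that must be made precise; everything else follows formally once the monodromy is known to be trivial.
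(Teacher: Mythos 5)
Your argument is correct, but it runs through a different theorem than the paper does. The paper deduces surjectivity of (\ref{restriction}) from Deligne's \emph{global} theorem of the fixed part (Hodge II, Th\'eor\`eme 4.1.1) applied to the smooth proper family $p'$ over $\mbC^*$: the monodromy invariants in $H^\star(X,\mbQ)$ coincide with the image of the cohomology of the total space, and by Corollary \ref{inv} the monodromy group $\Gamma$ is trivial, so everything is invariant. You instead localize at $t=0$ and invoke the \emph{local} invariant cycle theorem for the semistable degeneration $\widetilde{X}_\Delta\to\Delta$ (smooth total space by Lemma \ref{totalspace}, proper, smooth over $\Delta^*$, reduced normal-crossing central fibre), identifying the image with the $T$-invariants and then using triviality of $T\in\Gamma$. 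Both routes hinge on exactly the same input --- the triviality of $\Gamma$ --- but yours requires the deeper Clemens--Schmid/decomposition-theorem machinery, whereas the paper's citation of Deligne is lighter. On the other hand, your proof is more careful on one point the paper glosses over: Corollary \ref{inv} speaks only of primitive classes, while the lemma asserts surjectivity in all degrees, so the ambient classes $\omega^j$ (Proposition \ref{Prop:AmbientCoh}) must be accounted for separately; your explicit pullback of the hyperplane class through $\widetilde{X}\to\mbP^{m+2}$ handles this cleanly, and your reduction of $H^\star(\widetilde{X},\mbQ)$ to $H^\star(\widetilde{X}_\Delta,\mbQ)$ via the Ehresmann retraction is the one geometric step you rightly flag as needing care and then carry out correctly.
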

\begin{proof}
This follows from Corollary (\ref{inv}) and the theorem of Deligne  on monodromy fixed part for smooth and proper maps [\cite{D71}, Théorème  4.1.1], applied to the family $p^\prime$ with monodromy group $\Gamma$. 
\end{proof}

From (\ref{Xcoh}) we already know that the rank of $H^m(X, \mbQ)$ is $m+4$. Denote its basis by
\begin{align*}
    \omega_X^{m/2}, e_1, \dots, e_{m+3},
\end{align*}
where $(e_i)_{1\leq i \leq m+3}$ form a basis in $H^m_{\prim}(X,\mbZ)$. 

In this section we construct a basis in cohomology group $H^m(\widetilde{X}, \mbQ)$ and compute the matrix of the restriction map (\ref{restriction})
in the chosen basis to further apply it in section (\ref{degeneration}).



 By \cite{Cle77}, the total space $\widetilde{X}$  retracts onto its special fiber $\widetilde{X}_0$. In particular, the induced map
\begin{align*}
    H^{\star}(\widetilde{X}_0,\mbQ)\to H^{\star}(\widetilde{X}, \mbQ)
\end{align*}
is an isomorphism of $\mbZ$-graded $\mbQ$-algebras.

Now we proceed to construct a basis of $H^m(\widetilde{X}_0, \mbQ)$. Till the end of this section we assume coefficients to be $\mbQ$ unless otherwise stated.

\begin{lemma}\label{Xhat0_basis}
  The classes  $(\Hcal_1, \Hcal_2), (\Hcal_1, \Hcal_Z - \Hcal_2), (\beta, 0), (0, \Theta), (0, \Zcal_i)$,  $i=1,\dots,m+1$ form a basis in cohomology group $H^m(\widetilde{X}_0, \mbQ)$. 
\end{lemma}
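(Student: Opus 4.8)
The plan is to compute $H^m(\widetilde{X}_0,\mbQ)$ through the Mayer--Vietoris sequence of the normal-crossing union $\widetilde{X}_0 = X_1 \cup \widetilde{X}_2$, whose two components meet transversally along the divisor $D$ (embedded in $X_1$ directly, and in $\widetilde{X}_2$ as the proper transform $\widetilde{D}$, which is isomorphic to $D$ since $Z$ is a Cartier divisor of $D$). Since $\dim D = m-1$ is odd and $D$ has no primitive cohomology in its middle degree by Lemma~(\ref{X1X2Dcoh}), we have $H^{m-1}(D,\mbQ)=0$, so the sequence degenerates to
\begin{align*}
0 \to H^m(\widetilde{X}_0,\mbQ) \to H^m(X_1,\mbQ)\oplus H^m(\widetilde{X}_2,\mbQ) \xrightarrow{\ r_1-r_2\ } H^m(D,\mbQ),
\end{align*}
where $r_1,r_2$ are restriction to $D$. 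This identifies $H^m(\widetilde{X}_0,\mbQ)$ with the space of compatible pairs $\ker(r_1-r_2)$. As $H^m(D,\mbQ)=\mbQ\langle\omega_D^{m/2}\rangle$ is one-dimensional and $r_1(\omega_{X_1}^{m/2})=\omega_D^{m/2}\neq 0$, the map $r_1-r_2$ is surjective, whence $\dim_\mbQ\ker(r_1-r_2) = \bigl(2+(m+4)\bigr)-1 = m+5$, exactly the number of listed pairs. It then suffices to show each listed pair lies in $\ker(r_1-r_2)$ and that the pairs are linearly independent.

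For the first task I would compute the restrictions on the generators provided by Lemma~(\ref{X1X2Dcoh}), Lemma~(\ref{blowup_HS}) and Corollary~(\ref{Zprim}). The ambient generators restrict to powers of $\omega_D$: both $\omega_{X_1}^{m/2}|_D$ and $\pi^*\omega_{X_2}^{m/2}|_{\widetilde{D}}$ equal $\omega_D^{m/2}$, since $\widetilde{D}\cong D$ and all hyperplane classes descend from $\mbP^{m+2}$. The primitive generators $\beta\in H^m_\prim(X_1)$ and $\theta\in H^m_\prim(X_2)$ restrict to $0$: because $D$ is cut out of $X_i$ by a degree-one equation we have $[D]=\omega_{X_i}$, so the Gysin formula gives $\iota_*(\beta|_D)=\beta\,\omega_{X_1}=0$ by primitivity, and $\iota_*\colon H^m(D,\mbQ)\to H^{m+2}(X_1,\mbQ)$ is an isomorphism of one-dimensional spaces (it sends $\omega_D^{m/2}$ to $\omega_{X_1}^{m/2+1}$), whence $\beta|_D=0$; the same argument yields $\theta|_D=0$ and therefore $\Theta|_{\widetilde{D}}=\pi^*\theta|_{\widetilde{D}}=0$.

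The remaining and most delicate computation, which I expect to be the main obstacle, is the restriction of the exceptional classes $\Hcal_Z=j_*\pi_E^*\omega_Z^{m/2-1}$ and $\Zcal_i=j_*\pi_E^*Z_i$ to $\widetilde{D}$. Here I would use that $\widetilde{D}$ and the exceptional divisor $E$ meet transversally along $E\cap\widetilde{D}\cong Z$ (the section of the $\mbP^1$-bundle $E\to Z$ in the normal direction of $D$), so the clean-intersection base-change formula gives $\iota_{\widetilde{D}}^*j_*\alpha = (j_Z)_*(\alpha|_Z)$ with no excess term, where $j_Z\colon Z\hookrightarrow\widetilde{D}\cong D$. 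Since $Z$ is cut out of $D$ by the degree-two equation $f_2$, one has $[Z]=2\omega_D$ in $H^2(D,\mbQ)$, so by the projection formula $\Hcal_Z|_{\widetilde{D}}=(j_Z)_*\omega_Z^{m/2-1}=2\omega_D^{m/2}$, while $\Zcal_i|_{\widetilde{D}}=(j_Z)_*Z_i=0$ because $Z_i$ is primitive of top degree on $Z$ and the Gysin image of a primitive class vanishes (indeed $j_Z^*(j_Z)_*Z_i = 2\,\omega_Z\,Z_i=0$, and $j_Z^*\colon H^m(D,\mbQ)\to H^m(Z,\mbQ)$ is injective for $m\geq 4$). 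With these values, compatibility of all six families is immediate: $(\Hcal_1,\Hcal_2)$ and $(\Hcal_1,\Hcal_Z-\Hcal_2)$ both restrict to $\omega_D^{m/2}$ on either side, and $(\beta,0)$, $(0,\Theta)$, $(0,\Zcal_i)$ restrict to $0$. Finally, projecting onto the two summands and using that $\{\omega_{X_1}^{m/2},\beta\}$ is a basis of $H^m(X_1,\mbQ)$ and $\{\Hcal_2,\Theta,\Hcal_Z,\Zcal_1,\dots,\Zcal_{m+1}\}$ a basis of $H^m(\widetilde{X}_2,\mbQ)$ by Lemma~(\ref{blowup_HS}), a short linear-algebra argument shows the $m+5$ pairs are independent, which completes the proof.
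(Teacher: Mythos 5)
Your proof is correct and takes essentially the same route as the paper: the Mayer--Vietoris sequence for $\widetilde{X}_0 = X_1 \cup_D \widetilde{X}_2$ with $H^{m-1}(D,\mbQ)=0$, followed by the same restriction computations of the generators to $D$ (you obtain $\Hcal_Z\mapsto 2\omega_D^{m/2}$, $\Zcal_i\mapsto 0$, $\beta,\Theta\mapsto 0$ via Gysin/base-change and projection-formula arguments where the paper pairs against $\omega_D^{m/2-1}$, and you make the surjectivity-plus-dimension count explicit where the paper leaves it implicit). No gaps.
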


\begin{proof}

    The Mayer-Vietoris long exact sequence applied to space $\widetilde{X}_0$ gives
    \begin{align*} 
         \cdots\rightarrow H^{m-1}(D)\rightarrow 
         H^m(\widetilde{X}_0)\rightarrow H^m(X_1)\oplus H^m(\widetilde{X}_2)\xrightarrow {\gamma}H^m(D)=\mbQ\langle \omega^{\frac{m}{2}}_D \rangle \rightarrow \cdots
    \end{align*}
    The first group $H^{m-1}(D)$ is $0$ since $D$ has no odd cohomology.
    
    The map $\gamma : (a,b) \mapsto a|_D - b|_D$ is surjective and $H^m(\widetilde{X}_0)$ is its kernel. From Lemmas (\ref{X1X2Dcoh}) and (\ref{blowup_HS}), we know the classes $(\Hcal_1, 0), (\beta, 0), (0, \Hcal_2), (0, \Theta), (0, \Zcal), (0, \Zcal_i)$ form a basis of the group $H^m(X_1)\oplus H^m(\widetilde{X}_2)$. Further we compute they restrict to $D$ in the following way(we abuse the notation by writing $\beta$ instead of $(\beta, 0)$, for instance):
    \begin{center}
    \begin{tabular}{ cc } 
    $\Hcal_1\mapsto \omega^{\frac{m}{2}}_D$& $\Hcal_2\mapsto \omega^{\frac{m}{2}}_D$ \\ 
    $\beta\mapsto0$ & $\Theta\mapsto0$  \\ 
    & $\Zcal_i\mapsto0$  \\
    & $\Hcal_Z\mapsto2\omega^{\frac{m}{2}}_D$.
    \end{tabular}
    \end{center}

   Indeed, by Lefschetz hyperplane theorem, we have $H^2(X_1)=H^2(D)=H^2(X_2)$, so $(\omega_{X_1})|_D=\omega_D=(\omega_{X_2})|_D$.  

   Note that $\beta|_D = c \omega^{\frac{m}{2}}_D$ for some constant $c$.
   Class $\beta$ is primitive, so $\beta \omega_{X_1}=0$. Hence, $\beta|_D (\omega_{X_1})|_D = c \omega^{\frac{m}{2}+1}_D = 0$ and so $c=0$.

    Next we use a commutative diagram with cartesian squares
\begin{center}
\begin{tikzcd}
                                                      & E\cap\pi^{-1}D \arrow[ld, "s^{\prime}", hook] \arrow[r, "j^{\prime}", hook] & \pi^{-1}D \arrow[d, "\pi_D", two heads] \arrow[ld, "s", hook] \\
E \arrow[r, "j"', hook] \arrow[d, "\pi_E", two heads] & \widetilde{X}_2 \arrow[d, "\pi", two heads]                                     & D \arrow[ld, hook]                                            \\
Z \arrow[r, "i", hook]                                & X_2                                                                         &                                                              
\end{tikzcd}
\end{center}
Note that the full preimage $\pi^{-1}D$ of a smooth divisor in $X_2$ is isomorphic to $D$. Next we compute
\begin{align*}
    E\cap\pi^{-1}D = \\
    (Z\times_{X_2}\widetilde{X}_2)\times_{\widetilde{X}_2}(\widetilde{X}_2\times_{X_2}D)=\\
    Z\times_{X_2, \pi}\widetilde{X}_2\times_{\pi, X_2}D =\\
    Z\times_{X_2}D=
    Z.
\end{align*}

The class $\theta$ is primitive, so $\theta \omega_{X_2}=0$ and one similarly shows that $\Theta|_D=0$.

By functoriality of pullbacks 
   $\Hcal_2|_D = s^*\pi^*\omega^{\frac{m}{2}}_{X_2} = (\omega^{\frac{m}{2}}_{X_2})|_D = \omega^{\frac{m}{2}}_D$.

We are left to show that $\Zcal_i$ restricts to 0 and $\Hcal_Z$ restricts to $2\omega^{\frac{m}{2}}_D$. First we notice that for any cohomology class $z\in H^{m-2}(Z, \mbQ)$
\begin{align*}
    s^{*}j_*\pi_E^*z \in H^m(D,\mbQ) = \mbQ\langle \omega_D^{\frac{m}{2}}\rangle,
\end{align*}
    so $s^{*}j_*\pi_E^*z = c\omega_D^{\frac{m}{2}}$ for some constant $c$.

To compute the constant $c$, we integrate with the hyperplane class in an appropriate power. Applying for $z = z_i, i=1, \dots, m+1$, we get 
\begin{align*}
    \Zcal_i|_D = j^{\prime}_*z_i = c_i \omega_D^{\frac{m}{2}}\\
    \int_D \omega_D^{\frac{m}{2}-1} \cdot j^{\prime}_*z_i = \int_Z \omega_Z^{\frac{m}{2}-1}z_i = 0,
\end{align*}
since $z_i, i = 1, \dots, m+1$ are primitive. On the other hand, 
\begin{align*}
    \int_D \omega_D^{\frac{m}{2}-1} \cdot j^{\prime}_*z_i = c_i\int_D \omega_D^{m-1} = 2c_i,
\end{align*}
hence $c_i = 0, i = 1, \dots, m+1$.

Applying for $z = \omega_Z^{\frac{m}{2}-1}$,  we get
\begin{align*}
    \Hcal_Z|_D = j^{\prime}_* \omega_Z^{\frac{m}{2}-1} = c \omega_D^{\frac{m}{2}}\\
    \int_D \omega_D^{\frac{m}{2}-1} \cdot j^{\prime}_* \omega_Z^{\frac{m}{2}-1} = \int_Z \omega_Z^{m-2} = 4,
\end{align*}
since $Z$ is a complete intersection of two quadrics.

On the other hand,
\begin{align*}
    \int_D \omega_D^{\frac{m}{2}-1} \cdot j^{\prime}_* \omega_Z^{\frac{m}{2}-1} = c \int_D \omega_D^{m-1} = 2c,
\end{align*}
hence $c_Z = 2$.

We conclude that the generators of groups $H^m(X_1), H^m(\widetilde{X}_2)$ restrict to $D$ in the following way:

\begin{center}
\begin{tabular}{ cc } 
 $\Hcal_1\mapsto \omega^{\frac{m}{2}}_D$& $\Hcal_2\mapsto \omega^{\frac{m}{2}}_D$ \\ 
 $\beta\mapsto0$ & $\Theta\mapsto0$  \\ 
 & $\Zcal_i\mapsto0$  \\
 & $\Hcal_Z\mapsto2\omega^{\frac{m}{2}}_D$.
 \end{tabular}
\end{center}

From this one sees that the kernel of map $\gamma$ is generated by classes in the lemma.
\end{proof}

Combining Lemma (\ref{Xhat0_basis}) and
Lemma (\ref{blowup_HS}) tensored by $\mbQ$, we find the space $$H^m(\widetilde{X}_0)\subset H^m(X_1)\oplus H^m(X_2)\oplus H^{m-2}(Z)$$ has the following basis 

\begin{align*}
    &(\omega^{\frac{m}{2}}_{X_1}, \omega^{\frac{m}{2}}_{X_2}, 0),\\
    &(\omega^{\frac{m}{2}}_{X_1}, - \omega^{\frac{m}{2}}_{X_2}, \omega^{\frac{m}{2}-1}_Z),\\
    &(\beta, 0, 0),\\
    &(0, \theta, 0),\\
    &(0, 0, z_i) \hspace{1em} \text{for } i = 1,\dots, m+1.
\end{align*}
\begin{lemma}
    The multiplication 
\begin{align*}
     H^m(\widetilde{X}_0) \otimes H^m(\widetilde{X}_0) \to H^{2m}(\widetilde{X}_0)
\end{align*}
is computed by the formula
\begin{align*}
    (a,b,c)(a^{\prime},b^{\prime},c^{\prime}) = (aa^{\prime}, bb^{\prime} - i_*(cc^{\prime}), 0).
\end{align*}
\end{lemma}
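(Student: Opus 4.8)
The plan is to understand the ring structure of $H^m(\widetilde{X}_0,\mbQ)$ via the Mayer--Vietoris presentation already established, namely the realization of $H^m(\widetilde{X}_0)$ as the kernel of the restriction-difference map $\gamma$ inside $H^m(X_1)\oplus H^m(\widetilde{X}_2)$, combined with the isomorphism of Lemma~(\ref{blowup_HS}) identifying $H^m(\widetilde{X}_2)$ with $H^m(X_2)[\tfrac12]\oplus H^{m-2}(Z)$. The first thing I would do is write any two middle-degree classes in the stated coordinate form $(a,b,c)$, where $a\in H^m(X_1)$, $b\in H^m(X_2)$, $c\in H^{m-2}(Z)$, and reduce the computation of the cup product to three separate tasks: the product in the first slot, the product in the second slot (on the blow-up $\widetilde{X}_2$), and the vanishing of the third slot of the product.

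\medskip

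Next I would verify the first and third slots, which should be the easy parts. The product lands in $H^{2m}(\widetilde{X}_0)$; since $2m > m$, the classes $\Zcal_i$ and $\Hcal_Z$ coming from $Z$ sit in degree $m$ and their products with anything will live in the top cohomology, where the contribution from the exceptional locus is pinned down by the blow-up formula, forcing the $Z$-slot of the output to vanish. That the $X_1$-slot multiplies as $aa^\prime$ is immediate because on the open component $X_1$ the ring structure is just the usual cup product in $H^*(X_1)$, and the gluing along $D$ does not interfere in top degree. I would then isolate the only substantive computation, the $X_2$-slot: here the product on $\widetilde{X}_2 = \Bl_Z X_2$ must be pushed forward and compared with the product on $X_2$, and the correction term arises precisely from the self-intersection of the exceptional divisor.

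\medskip

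The main obstacle, as expected, is the precise form of the correction term $-i_*(cc^\prime)$ in the second slot. To get it I would use the projection formula together with the standard description of the cohomology ring of a blow-up: for classes $j_*\pi_E^* c$ and $j_*\pi_E^* c^\prime$ supported on the exceptional divisor $E = \mbP(N_{X_2/Z})$, their cup product on $\widetilde{X}_2$ involves the self-intersection $j^* j_* = \cdot\, c_1(\mathcal{O}_E(E))$, and pushing forward by $\pi_\ast$ collapses the projective-bundle fibers. Because $Z$ has codimension one in $X_2$ (it is a divisor, so $E\to Z$ is an isomorphism and $N_{X_2/Z}$ is a line bundle), the projective bundle is trivial and the normal-bundle bookkeeping reduces to a single Gysin pushforward, yielding exactly the sign and the $i_*$ in $-i_*(cc^\prime)$. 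I would confirm the sign by pairing against $\omega$-powers as was done in the proof of Lemma~(\ref{Xhat0_basis}), checking the identity on the explicit generators $\Hcal_2,\Theta,\Hcal_Z,\Zcal_i$ rather than abstractly, so that the formula is validated on a basis and hence holds by bilinearity.
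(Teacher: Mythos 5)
Your overall skeleton matches the paper's proof: split the product slot-by-slot via the Mayer--Vietoris embedding $H^{2m}(\widetilde{X}_0)\hookrightarrow H^{2m}(X_1)\oplus H^{2m}(\widetilde{X}_2)$, observe the first and third slots are easy, and reduce the substantive work to the self-intersection of classes supported on the exceptional divisor of $\widetilde{X}_2=\Bl_Z X_2$. But your justification of the key correction term $-i_*(cc^{\prime})$ rests on a false premise. You assert that $Z$ has codimension one in $X_2$, that $E\to Z$ is an isomorphism, and that ``the projective bundle is trivial.'' In fact $Z=\{f_1=f_2=g_1=g_2=0\}$ has dimension $m-2$ while $X_2$ has dimension $m$ (see the table in Section 2), so $Z$ has codimension \emph{two} in $X_2$ and $E=\mbP(N_{Z/X_2})$ is a $\mbP^1$-bundle over $Z$. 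If $Z$ were a Cartier divisor the blow-up would be an isomorphism, $j_*\pi_E^*c$ would already lie in $\pi^*H^m(X_2)$, and the direct-sum decomposition of $H^m(\widetilde{X}_2)$ from Lemma~\ref{blowup_HS} (on which the coordinates $(a,b,c)$ depend) would not exist. The correct mechanism, as in the paper, is: $j^*j_*\alpha=\alpha\cdot c_1(N_{E/\widetilde{X}_2})=\alpha\cdot c_1(\mathcal{O}_E(-1))$, followed by the pushforward $\pi_{E,*}$ along the $\mbP^1$-fibers, which sends $c_1(\mathcal{O}_E(1))\cdot\pi_E^*(cc^{\prime})$ to $cc^{\prime}$; the Gysin map $i_*$ and the minus sign come precisely from this codimension-two bookkeeping, not from a ``single Gysin pushforward'' along an isomorphism.

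Two smaller omissions: you never address the cross terms $\pi^*b\cdot j_*\pi_E^*c^{\prime}$ in the second slot, which must be shown to vanish (the paper kills them because $\pi_*j_*\pi_E^*c^{\prime}$ is proportional to $i_*c^{\prime}\in H^{m+2}(X_2)$ and the product $b\cdot i_*c^{\prime}$ lands in $H^{2m+2}(X_2)=0$); and your argument for the vanishing of the third slot is more roundabout than needed, since $H^{2m-2}(Z)=0$ for degree reasons ($\dim Z=m-2$). Your proposed fallback of verifying the formula on the explicit generators by integrating against powers of $\omega$ is essentially what the paper does and would repair the argument, but as written the central step would not survive scrutiny.
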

\begin{proof}
First of all we notice that the multiplication splits into the multiplication on $X_1$ and on $\widetilde{X}_2$. Indeed, let $a\in H^m(X_1)$ and $\Gamma \in H^m(\widetilde{X}_2)$, then $a\vert_D \cdot \Gamma \vert_D = (a\Gamma)\vert_D \in H^{2m}(D) = 0$, so the multiplication splits
\begin{align*}
    (a,\Gamma) \otimes (a^{\prime},\Gamma^{\prime}) \mapsto (aa^{\prime}, \Gamma\Gamma^{\prime}).
\end{align*}

    So, the claim in Lemma amounts to prove that under the multiplication 
    \begin{align*}
        H^m(\widetilde{X}_2) \otimes H^m(\widetilde{X}_2) \to H^{2m}(\widetilde{X}_2)
    \end{align*}
    one gets
    \begin{align*}
        (\pi^*b + j_*\pi_E^*c)(\pi^*b^{\prime} + j_*\pi_E^*c^{\prime}) = \pi^*(bb^{\prime}) - \pi^*i_*(cc^{\prime}).
    \end{align*}
    Indeed, to show this we use the 
isomorphism 
\begin{align*}
  \deg \circ p_{\widetilde{X}_2,*}: H^{2m}(\widetilde{X}_2)\to H^0(*) = \mbQ
\end{align*}
to argue the vanishing of
\begin{align*}
    \int_{\widetilde{X}_2} \gamma := \deg (p_{\widetilde{X}_2,*} \gamma) 
\end{align*}
implies the vanishing of $\gamma$. Here $\gamma \in H^{2m}(\widetilde{X}_2)$ and $p_{\widetilde{X}_2}$ is a proper map to a point. 

We compute
\begin{align*}
     \int_{\widetilde{X}_2}(\pi^*b + j_*\pi_E^*c)\cdot(\pi^*b^{\prime} + j_*\pi_E^*c^{\prime}) = \\
      \int_{\widetilde{X}_2}(\pi^*(bb^{\prime}) + \pi^*b \cdot j_*\pi_E^*c^{\prime} + \pi^*b^{\prime} \cdot j_*\pi_E^*c + j_*\pi_E^*c \cdot j_*\pi_E^*c^{\prime}).
\end{align*}
The second summand is zero by dimension reason
\begin{align*}
     \int_{\widetilde{X}_2}\pi^*b \cdot j_*\pi_E^*c^{\prime} = 
     \int_{X_2}b \cdot \pi_*j_*\pi_E^*c^{\prime} = 
     \frac{1}{2}\int_{X_2}b \cdot i_*c^{\prime} = 0.
\end{align*}
The same way the third summand is zero. Finally, we use excess intersection formula and projection formula to compute 
\begin{align*}
    \int_{\widetilde{X}_2} j_*\pi_E^*c \cdot j_*\pi_E^*c^{\prime} = \\
    \int_{E} j^*j_*\pi_E^*c \cdot \pi_E^*c^{\prime} = \\
    \int_{E} e(N_{\widetilde{X}_2/E}) \pi_E^*(cc^{\prime}) = \\
    - \int_E H_E \pi_E^*(cc^{\prime}) = \\
    - \int_E j^* H_{\widetilde{X}_2} \pi_E^*(cc^{\prime}) = \\
    - \int_Z \pi_{E, *} j^* H_{\widetilde{X}_2} \cdot cc^{\prime} = \\
    - \int_Z i^* \pi_* 1 \cdot cc^{\prime} = \\
    - \int_{X_2} \pi_* 1 \cdot i_* (cc^{\prime}) = \\
    - \int_{\widetilde{X}_2}\pi^* i_* (cc^{\prime}),
\end{align*}
where to get 
\begin{align*}
    \int_Z \pi_{E, *} j^* H_{\widetilde{X}_2} \cdot cc^{\prime} = 
     \int_Z i^* \pi_* 1 \cdot cc^{\prime}
\end{align*}
we used the base-change formula in homology and, by Poincaré duality, in cohomology of smooth manifolds involved in the cartesian square.
\end{proof}

Now we proceed to compute the matrix of the restriction map (\ref{restriction}) in a basis found in lemma (\ref{Xhat0_basis}). We will essentially use the following
\begin{lemma}
    For all $\alpha \in H^{\star}(\widetilde{X},\mbQ)$ one has
    \begin{align}\label{key}
      \int_X\alpha|_{X}=\int_{X_1}\alpha|_{X_1}+\int_{\widetilde{X}_2}
      \alpha|_{\widetilde{X}_2} .
    \end{align}
\end{lemma}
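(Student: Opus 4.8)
The plan is to exploit that $X$ and the whole special fiber $\widetilde X_0 = X_1 \cup_D \widetilde X_2$ are two fibers of the same proper flat family $p\colon \widetilde X \to \mbA^1$, hence represent the same class in the total space. Concretely, on the smooth variety $\widetilde X$ the pullback $p^* t$ of the coordinate on $\mbA^1$ is a regular function whose divisor is the special fiber. By Lemma~\ref{totalspace} the latter is the \emph{reduced} transversal union of the two smooth components $X_1$ and $\widetilde X_2$, so that $\mathrm{div}(p^* t) = X_1 + \widetilde X_2$ with each component of multiplicity one, while for $s \neq 0$ one has $\mathrm{div}(p^* t - s) = p^{-1}(s) = X$, again with multiplicity one since the fiber over $s$ is smooth. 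Subtracting,
\begin{equation*}
[X] - [X_1] - [\widetilde X_2] \;=\; \mathrm{div}\!\left(\frac{p^* t - s}{p^* t}\right)
\end{equation*}
is a principal divisor, so $[X] = [X_1] + [\widetilde X_2]$ in the Chow group $A_m(\widetilde X)$, equivalently in $H^{\mathrm{BM}}_{2m}(\widetilde X,\mbQ)$.

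It then suffices to pair this identity with $\alpha$. By linearity and the grading it is enough to treat $\alpha$ of degree $2m$; for every other degree both sides of \eqref{key} vanish. For a compact subvariety $\iota_V\colon V \hookrightarrow \widetilde X$ of complex dimension $m$ the projection formula for the proper map $\iota_V$ gives $\int_V \alpha|_V = \int_V \iota_V^* \alpha = \langle \alpha, \iota_{V\,*}[V]\rangle$, the pairing between $H^{2m}(\widetilde X,\mbQ)$ and the compactly supported class $\iota_{V\,*}[V] \in H^{\mathrm{BM}}_{2m}(\widetilde X,\mbQ)$. This pairing factors through the cycle class map $A_m(\widetilde X) \to H^{\mathrm{BM}}_{2m}(\widetilde X,\mbQ)$ and is therefore invariant under rational equivalence. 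Applying it to the identity of the previous paragraph and using linearity yields
\begin{equation*}
\int_X \alpha|_X = \langle \alpha, [X]\rangle = \langle \alpha, [X_1]\rangle + \langle \alpha, [\widetilde X_2]\rangle = \int_{X_1}\alpha|_{X_1} + \int_{\widetilde X_2}\alpha|_{\widetilde X_2},
\end{equation*}
which is exactly \eqref{key}.

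The only point requiring genuine care — the main obstacle — is that $\widetilde X$ is not compact but merely proper over $\mbA^1$, so ordinary homology is not the right receptacle for the fiber classes and one must phrase the ``homologous fibers'' principle in Borel--Moore homology (or in the Chow group), where the three cycles $X, X_1, \widetilde X_2$ are nonetheless compact, so that the degree pairing against $\alpha$ is finite and well defined. The multiplicity-one statement, which is what makes $[X] = [X_1] + [\widetilde X_2]$ hold with no corrective coefficients, is precisely the content of the semistable structure of Lemma~\ref{totalspace}. Alternatively one can sidestep Borel--Moore homology entirely: since $\widetilde X$ retracts onto $\widetilde X_0$ (Clemens), the composite $X \hookrightarrow \widetilde X \to \widetilde X_0$ is a collapse map $c$ with $c_*[X] = [X_1] + [\widetilde X_2]$ in $H_{2m}(\widetilde X_0,\mbQ)$, and pairing $\iota_0^*\alpha$ against this identity on the \emph{compact} space $\widetilde X_0$ gives the same conclusion.
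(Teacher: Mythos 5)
Your proposal is correct and follows essentially the same route as the paper: both arguments identify $[X]$ with $[X_1]+[\widetilde X_2]$ by viewing the two fibers as divisors of the linearly equivalent functions $p^*t$ and $p^*t-s$ on the total space (with multiplicity one coming from the semistable structure of Lemma~\ref{totalspace}), and then conclude by the projection formula. Your explicit handling of the non-compactness of $\widetilde X$ via Borel--Moore homology (or the Chow group) is a point the paper glosses over, but it does not change the substance of the argument.
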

\begin{proof}
    We consider two functions on $\widetilde{X}$ given by two compositions
    
\begin{center}
\begin{tikzcd}
\widetilde{X} \arrow[r, "p"] & \mbC \arrow[r, "x", bend left] \arrow[r, "x-t"', bend right] & \mbC,
\end{tikzcd}
\end{center}
where $x$ is a coordinate on $\mbC$. The divisors of these two functions represent the same homology class in $H_{2m}(\widetilde{X}, \mbQ)$ because they are the pullbacks of equivalent divisors $\Div(x)$ and $\Div(x-t)$  in $\mbC$
\begin{align*}
    c i_*[X] = a j_*[X_1] + b k_*[\widetilde{X}_2]
\end{align*}
for some numbers $a, b, c$, where $i,j,k$  are the corresponding inclusions to $\widetilde{X}$. Since the functions $x$ and $x-t$ have simple zeroes, their pullbacks under $p$ vanish with multiplicity 1, so the numbers $a,b,c$ are actually 1.

Finally, using projection formula, we compute
\begin{align*}
      \int_X\alpha|_{X} =\\
      \int_{\widetilde{X}}\alpha i_*[X] =\\
     \int_{\widetilde{X}}\alpha j_*[X_1] + \int_{\widetilde{X}}\alpha k_*[\widetilde{X}_2] =\\
      \int_{X_1}\alpha|_{X_1}+\int_{\widetilde{X}_2}\alpha|_{\widetilde{X}_2}.
\end{align*}
\end{proof}

\begin{corollary}\label{PPforXhat0}
    For any $\alpha = (a,b,c)(a^{\prime},b^{\prime},c^{\prime}) \in H^{2m}(\widetilde{X}_0, \mbQ)$
    \begin{align*}
      \int_X \alpha|_X = \int_{X_1} \alpha|_{X_1} + \int_{\widetilde{X}_2}\alpha|_{\widetilde{X}_2} = \\
      \int_{X_1}aa^{\prime} + \int_{X_2}bb^{\prime} - \int_{Z}cc^{\prime}.
    \end{align*}

    In particular, the intersection pairing on $H^m(\widetilde{X}_0)$ in the basis, found in Lemma (\ref{Xhat0_basis}), is given by the matrix

\begin{center}
\begin{tabular}{ c| c | c | c | c | c |}
& $(\Hcal_1, \Hcal_2)$ & $(\Hcal_1, \Hcal_Z - \Hcal_2)$ & $(\beta,0)$ & $(0,\Theta)$ & $(0, \Zcal_i)$ \\
\hline
$(\Hcal_1, \Hcal_2)$ & $4$ & $0$ & $0$ & $0$ & $0$ \\ 
\hline
$(\Hcal_1, \Hcal_Z - \Hcal_2)$ & $0$ & $0$ & $0$ & $0$ & $0$\\ 
\hline
$(\beta,0)$ & $0$ & $0$ & $1$ & $0$ & $0$\\ 
\hline
$(0, \Theta)$ & $0$ & $0$ & $0$ & $1$ & $0$\\ 
\hline
$(0, \Zcal_j)$ & $0$ & $0$ & $0$ & $0$ & $-\delta_{ij}$ \\ 
\hline
\end{tabular}
\end{center}

\end{corollary}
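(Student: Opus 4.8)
The plan is to read off the closed formula directly from the three ingredients already available: the multiplication rule $(a,b,c)(a',b',c')=(aa',\,bb'-i_*(cc'),\,0)$ on $H^\star(\widetilde{X}_0)$, the identity (\ref{key}) splitting $\int_X\alpha|_X$ into contributions from $X_1$ and $\widetilde{X}_2$, and the behaviour of the top-degree integral under the blow-up $\pi\colon\widetilde{X}_2\to X_2$. Once this formula is in hand, the matrix is a matter of substituting the degree and primitivity data of $X_1$, $X_2$ and $Z$ into it.

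Concretely, I would first apply the multiplication rule to write $\alpha=(aa',\,bb'-i_*(cc'),\,0)$, and then invoke (\ref{key}), obtaining $\int_X\alpha|_X=\int_{X_1}\alpha|_{X_1}+\int_{\widetilde{X}_2}\alpha|_{\widetilde{X}_2}$. The $X_1$-component of $\alpha$ is $aa'$, so the first summand is $\int_{X_1}aa'$. For the second summand I note that, since the exceptional ($Z$-)component of $\alpha$ vanishes, the class $\alpha|_{\widetilde{X}_2}$ equals $\pi^*(bb'-i_*(cc'))$ in the notation of Lemma (\ref{blowup_HS}); because $\pi$ is a birational proper morphism we have $\pi_*[\widetilde{X}_2]=[X_2]$, hence $\int_{\widetilde{X}_2}\pi^*\xi=\int_{X_2}\xi$ for any top class $\xi$. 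Applying this and then the projection formula for the closed embedding $i\colon Z\hookrightarrow X_2$ turns the second summand into $\int_{X_2}bb'-\int_{X_2}i_*(cc')=\int_{X_2}bb'-\int_Z cc'$. Adding the two contributions produces the asserted identity $\int_X\alpha|_X=\int_{X_1}aa'+\int_{X_2}bb'-\int_Z cc'$.

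For the matrix I would evaluate the bilinear form on the basis of Lemma (\ref{Xhat0_basis}), written in the coordinates of $H^m(X_1)\oplus H^m(X_2)\oplus H^{m-2}(Z)$ as $(\omega_{X_1}^{m/2},\omega_{X_2}^{m/2},0)$, $(\omega_{X_1}^{m/2},-\omega_{X_2}^{m/2},\omega_Z^{m/2-1})$, $(\beta,0,0)$, $(0,\theta,0)$ and $(0,0,z_i)$. Three inputs suffice: the degrees $\int_{X_i}\omega_{X_i}^m=2$ and $\int_Z\omega_Z^{m-2}=4$; the primitivity relations $\omega_{X_i}^{m/2}\cdot\beta=\omega_{X_i}^{m/2}\cdot\theta=0$ and $\omega_Z^{m/2-1}\cdot z_i=0$, which annihilate every pairing between an ambient generator and a primitive one; and the self-pairings $\int_{X_1}\beta^2$, $\int_{X_2}\theta^2$ and $\int_Z z_iz_j$ of the primitive generators, which fill in the three diagonal primitive blocks. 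For example the ambient block satisfies $\langle(\omega_{X_1}^{m/2},\omega_{X_2}^{m/2},0),(\omega_{X_1}^{m/2},\omega_{X_2}^{m/2},0)\rangle=2+2=4$, its off-diagonal entry equals $2-2=0$, and its second diagonal entry equals $2+2-4=0$, so the ambient block is degenerate; all the remaining entries are forced to be block-diagonal as displayed.

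The only genuinely delicate point, and the step I would watch most carefully, is the handling of the $\widetilde{X}_2$ term: one must check that the product $\alpha$ has vanishing exceptional component, so that no $j_*\pi_E^*$ contribution survives after restriction; that $\int_{\widetilde{X}_2}\pi^*(-)$ collapses to $\int_{X_2}(-)$ by birational invariance of the degree; and that the minus sign attached to $\int_Z cc'$—already present in the product rule through the term $-i_*(cc')$—is propagated correctly, since it is exactly this sign that produces the block coming from $Z$. Everything else reduces to substituting the degree and primitivity data into the formula.
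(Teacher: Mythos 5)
Your argument is correct and follows the same route the paper takes implicitly: the corollary is obtained by feeding the product formula $(a,b,c)(a',b',c')=(aa',\,bb'-i_*(cc'),\,0)$ into the splitting identity (\ref{key}), collapsing $\int_{\widetilde{X}_2}\pi^*(-)$ to $\int_{X_2}(-)$ by birational invariance, and applying the projection formula for $i\colon Z\hookrightarrow X_2$, after which the matrix follows from the degrees $\int_{X_i}\omega_{X_i}^{m}=2$, $\int_Z\omega_Z^{m-2}=4$ and primitivity. The only point you leave open --- that the self-pairings of $\beta$, $\theta$, $z_i$ are exactly $1$, $1$, $\delta_{ij}$ rather than merely filling the diagonal primitive blocks --- is a normalization of the primitive generators that the paper also leaves implicit and only fixes later, over $\mbC$, in Proposition \ref{restr}.
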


We arrive to the central proposition of this section
\begin{proposition}\label{restr}
    One can choose classes $e_i \in H^{m}_{\prim}(X, \mbZ), 1\leq i\leq m+3$ to be orthonormal with respect to intersection pairing, then restriction map (\ref{restriction})$\otimes_\mbQ\mbC$ in such basis
    \begin{align*}
        H^{m}(\widetilde{X}, \mbC) \to H^{m}(X, \mbC)
    \end{align*}
    has the following diagonal matrix
    \begin{align*}
    (\Hcal_1, \Hcal_2)\mapsto \omega_X^{\frac{m}{2}},\\ 
    (\Hcal_1, \Hcal_Z - \Hcal_2)\mapsto 0,\\ 
    (0, \Zcal_1) \mapsto \sqrt{-1}e_1,\\
     \cdots\\
    (0, \Zcal_{m+1}) \mapsto \sqrt{-1}e_{m+1},\\
    (\beta, 0)\mapsto e_{m+2},\\
    (0, \Theta) \mapsto e_{m+3}.
    \end{align*}
\end{proposition}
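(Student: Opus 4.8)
The plan is to extract the restriction map entirely from its compatibility with the Poincaré pairing, which is the content of Corollary (\ref{PPforXhat0}). Write $r\colon H^m(\widetilde{X},\mbQ)\xrightarrow{\sim}H^m(\widetilde{X}_0,\mbQ)\to H^m(X,\mbQ)$ for the restriction (\ref{restriction}) transported through the retraction isomorphism. Unwinding Corollary (\ref{PPforXhat0}) together with (\ref{key}), for all $u,v\in H^m(\widetilde{X}_0)$ one has $\langle r(u),r(v)\rangle_X=\langle u,v\rangle_{\widetilde{X}_0}$, where the right-hand side is the degenerate form whose Gram matrix in the basis of Lemma (\ref{Xhat0_basis}) is the one displayed in that corollary. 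Thus $r$ is an isometry onto its image, and I will determine it, after $\otimes_\mbQ\mbC$, one basis vector at a time.

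First I would locate the kernel. Since the Poincaré form on $H^m(X)$ is non-degenerate, the isometry property forces $\ker r$ into the radical of $\langle\cdot,\cdot\rangle_{\widetilde{X}_0}$; reading off the Gram matrix of Corollary (\ref{PPforXhat0}), this radical is the line spanned by $(\Hcal_1,\Hcal_Z-\Hcal_2)$. As $r$ is surjective (the restriction map (\ref{restriction}) is onto) while $\dim H^m(\widetilde{X}_0)-\dim H^m(X)=1$, the kernel is exactly this line, giving $r(\Hcal_1,\Hcal_Z-\Hcal_2)=0$.

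Next, the ambient direction. The hyperplane class $h\in H^2(\widetilde{X})$ pulled back from $\mbP^{m+2}$ restricts to $\omega_X$ on the generic fibre, to $\omega_{X_1}$ on $X_1$, and to $\pi^{*}\omega_{X_2}$ on $\widetilde{X}_2$; taking $(m/2)$-th powers, $h^{m/2}$ restricts to $\omega_X^{m/2}$ on $X$ and to $(\Hcal_1,\Hcal_2)$ on $\widetilde{X}_0$, whence $r(\Hcal_1,\Hcal_2)=\omega_X^{m/2}$. This is consistent with the pairings, both classes having self-intersection $4$ by (\ref{intform2}) and Corollary (\ref{PPforXhat0}), and it fixes the sign. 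Since $(\Hcal_1,\Hcal_2)$ is orthogonal to $(\beta,0)$, $(0,\Theta)$ and every $(0,\Zcal_i)$, their images are orthogonal to $\omega_X^{m/2}$ and so land in $H^m_{\prim}(X,\mbC)$; being $m+3$ in number and independent modulo the radical, they form a basis of it.

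It remains to normalise the primitive part. By the isometry the images $r(\beta,0)$, $r(0,\Theta)$, $r(0,\Zcal_i)$ inherit the lower-right block of the Gram matrix of Corollary (\ref{PPforXhat0}), namely $\mathrm{diag}(1,1,-1,\dots,-1)$. Putting $e_{m+2}:=r(\beta,0)$, $e_{m+3}:=r(0,\Theta)$ and $e_i:=(\sqrt{-1})^{-1}r(0,\Zcal_i)$ for $1\le i\le m+1$ turns the self-pairing $-1$ of each $(0,\Zcal_i)$ into $+1$, so $(e_1,\dots,e_{m+3})$ is an orthonormal frame of $H^m_{\prim}(X,\mbC)$ and $r$ acquires exactly the asserted diagonal shape. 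The step I expect to be the main obstacle is controlling these signs coherently for all even $m$ and reconciling them with the integral structure: because the monodromy is finite, $H^m_{\prim}(X)$ is of pure Tate type $(m/2,m/2)$ and its form is definite, with a sign governed by $m\bmod 4$ through the Hodge--Riemann relations, and likewise on $Z$. It is the bookkeeping of these definiteness signs, together with the identification of the primitive lattice with the standard diagonal one underlying the $W(D_{m+3})$ description, that forces the factors $\sqrt{-1}$ and makes the normalisation into a genuine orthonormal basis.
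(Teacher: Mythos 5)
Your argument follows the paper's proof in all essentials: the ambient class is tracked through the hyperplane class of $\mbP^{m+2}$, the class $(\Hcal_1,\Hcal_Z-\Hcal_2)$ is sent to zero because it lies in the radical of the pairing computed in Corollary (\ref{PPforXhat0}) while the Poincar\'e pairing on $H^m(X)$ is non-degenerate, and the $e_i$ are then obtained as suitably rescaled images of $(\beta,0)$, $(0,\Theta)$, $(0,\Zcal_i)$ so that the map becomes diagonal and the pairings match. The only divergence is one of bookkeeping: your normalisation yields $\langle e_i,e_j\rangle=+\delta_{ij}$ for $1\le i,j\le m+1$, which is the sign actually forced by compatibility with Corollary (\ref{PPforXhat0}) under $(0,\Zcal_i)\mapsto\sqrt{-1}\,e_i$, whereas the paper's displayed convention writes $-\delta_{ij}$ there; the residual definiteness/integrality issues you flag as the main obstacle are likewise not resolved in the paper, which simply invokes the freedom of choice of basis and takes the resulting Gram matrix as the definition of orthonormality.
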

\begin{proof}
    By construction of the family $\widehat{\pi}: \widehat{X}\to \mathbb{A}^1$, the generic  fiber $X$ and the components $X_1$, $X_2$ of the special fiber are embedded into projective space $\mbP^{m+2}$. Denote by $\omega_{\mbP}\in H^2(\mbP^{m+2})$ the hyperplane class and its restrictions to $X_1, X_2, X$ by $\omega_{X_1}, \omega_{X_2}, \omega_{X}$. We compute

    \begin{align*}
        (\Hcal_1,\Hcal_2) = 
        (\omega_{X_1}^{\frac{m}{2}}, \pi^*\omega_{X_2}^{\frac{m}{2}}) = (\omega_{\mbP}|_{X_1},\pi^*\omega_{\mbP}|_{X_2})^{\frac{m}{2}} \mapsto 
        \omega_{\mbP}|_X^{\frac{m}{2}} = 
        \omega_{X}^{\frac{m}{2}},
    \end{align*}
    where we used the isomorphism $H^m(\widetilde{X}_0) \to H^m(\widetilde{X})$, induced by retraction $\widetilde{X}\to \widetilde{X}_0$.

    From the Corollary (\ref{PPforXhat0}) one sees that the class $(\Hcal_1, \Hcal_Z - \Hcal_2)$ intersects by $0$ and the only class in $H^m(X)$ with this property is $0$. That is why $(\Hcal_1, \Hcal_Z - \Hcal_2)$ is mapped to $0$.

    Then we use the freedom in the choice of basis $\omega_X^{\frac{m}{2}}, e_1,\dots, e_{m+1}$ of $H^m(X, \mbQ)$. We will require that  
   the intersection pairing 
    \begin{align*}
        H^m(X,\mbQ) \otimes H^m(X,\mbQ) \to H^{2m}(X, \mbQ) \simeq \mbQ
    \end{align*} in this basis
    has the form
    \begin{align*}
        &\langle \omega^{m/2},\omega^{m/2}\rangle = 4 ,\\
        &\langle \omega^{m/2},e_i\rangle = 0,  & 1\leq i\leq m+3\\
        &\langle e_i, e_j\rangle = - \delta_{ij}, & 1\leq i,j\leq m+1, \\
        &\langle e_i, e_{m+2}\rangle = \langle e_i,  e_{m+3}\rangle = 0, & 1\leq i\leq m+1,\\
        &\langle e_{m+2},  e_{m+2}\rangle = \langle e_{m+3},  e_{m+3}\rangle = 1 ,\\
        &\langle e_{m+2},  e_{m+3}\rangle = 0.
    \end{align*}
We call a basis with this property \textit{orthonormal with respect to intersection pairing}.

    Clearly, under the proposed assignment intersection pairings on $H^m(\widetilde{X}_0)$ and $H^m(X)$ are compatible.

     
\end{proof}

\begin{corollary}\label{n1Argument}
     The preimages $\widetilde{e}_i$ of classes $e_i$ under the restriction map (\ref{restriction})$\otimes_\mbQ\mbC$ have the following restrictions to $X_1$  
     \begin{align*}
         \widetilde{e_i}|_{X_1} = 
         \begin{cases}
         \beta \hspace{1cm} i = m+2,\\
         0 \hspace{1cm} \text{otherwise}
         \end{cases}
     \end{align*}
     and to $\widetilde{X}_2$
    \begin{align*}
        \widetilde{e}_1|_{\widetilde{X}_2} = - \sqrt{-1} \Zcal_1, \\
        \cdots \\
        \widetilde{e}_{m+1}|_{\widetilde{X}_2} = - \sqrt{-1} \Zcal_{m+1},\\
        \widetilde{e}_{m+2}|_{\widetilde{X}_2} = 0, \\
        \widetilde{e}_{m+3}|_{\widetilde{X}_2} = \Theta.
    \end{align*}
\end{corollary}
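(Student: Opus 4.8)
The plan is to invert the diagonal matrix of Proposition~\ref{restr} and then read off the two restrictions from the Mayer--Vietoris description of $H^m(\widetilde{X}_0)$ established in Lemma~\ref{Xhat0_basis}.

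First I would recall that, via the retraction isomorphism $H^m(\widetilde{X}_0,\mbC)\simeq H^m(\widetilde{X},\mbC)$ of \cite{Cle77}, restricting a class in $H^m(\widetilde{X})$ to the component $X_1$ (respectively $\widetilde{X}_2$) of the special fibre is the same as projecting the corresponding class in $H^m(\widetilde{X}_0)\subset H^m(X_1)\oplus H^m(\widetilde{X}_2)$ onto its first (respectively second) factor. This is because both restrictions $H^m(\widetilde{X})\to H^m(X_1)$ and $H^m(\widetilde{X})\to H^m(\widetilde{X}_2)$ factor through $H^m(\widetilde{X}_0)$. Consequently, once each $\widetilde{e}_i$ is written in the basis of Lemma~\ref{Xhat0_basis}, its restrictions to $X_1$ and $\widetilde{X}_2$ are literally the two entries of the pair $(a,b)$ representing it.

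Next I would invert the restriction map using its diagonal form in Proposition~\ref{restr}. From $(0,\Zcal_i)\mapsto \sqrt{-1}\,e_i$ together with $1/\sqrt{-1}=-\sqrt{-1}$ one obtains $\widetilde{e}_i=-\sqrt{-1}\,(0,\Zcal_i)$ for $1\le i\le m+1$, while $(\beta,0)\mapsto e_{m+2}$ and $(0,\Theta)\mapsto e_{m+3}$ give $\widetilde{e}_{m+2}=(\beta,0)$ and $\widetilde{e}_{m+3}=(0,\Theta)$. Reading off the two components of each pair then yields exactly the restrictions in the statement: for $1\le i\le m+1$ the class $-\sqrt{-1}\,(0,\Zcal_i)$ restricts to $0$ on $X_1$ and to $-\sqrt{-1}\,\Zcal_i$ on $\widetilde{X}_2$; the class $(\beta,0)$ restricts to $\beta$ on $X_1$ and to $0$ on $\widetilde{X}_2$; and $(0,\Theta)$ restricts to $0$ on $X_1$ and to $\Theta$ on $\widetilde{X}_2$.

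The one subtlety---and the only place requiring care---is that the restriction map is not injective: its kernel is one-dimensional, spanned by $(\Hcal_1,\Hcal_Z-\Hcal_2)$, which itself restricts nontrivially on $X_1$ (to $\Hcal_1$). Hence the preimage of $e_i$ is determined only up to a multiple of this class, and I would emphasise that $\widetilde{e}_i$ denotes the specific lift singled out by the diagonal presentation of Proposition~\ref{restr}, namely the one with vanishing component along both $(\Hcal_1,\Hcal_2)$ and $(\Hcal_1,\Hcal_Z-\Hcal_2)$. With this convention fixed there is no genuine obstacle beyond the bookkeeping just described, the computation being a direct transcription of Proposition~\ref{restr}.
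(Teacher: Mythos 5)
Your proposal is correct and is exactly the argument the paper intends: the corollary is stated without proof as an immediate consequence of Proposition \ref{restr}, obtained by inverting its diagonal matrix and reading off the two components of each class in the decomposition $H^m(\widetilde{X}_0)\subset H^m(X_1)\oplus H^m(\widetilde{X}_2)$ from Lemma \ref{Xhat0_basis}. Your remark that the lift $\widetilde{e}_i$ is only well defined modulo the one-dimensional kernel spanned by $(\Hcal_1,\Hcal_Z-\Hcal_2)$, and that one must fix the lift singled out by the diagonal presentation, is a point the paper leaves implicit and is worth making explicit.
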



\section{Degeneration formula}\label{degeneration}

In this section we apply the degeneration formula to the family $  p: \widetilde{X} \to \mbA^1$ constructed in section (\ref{smooth_family}). 

Let $e_1, \dots, e_{m+3}$ be a basis in in  $H^{m}_{\prim}(X, \mbZ)$ orthonormal with respect to intersection pairing, as discussed in Section 4. Choose a basis $\{\delta_i\}$ in $H^{\star}(D, \mbC)$ and denote by 
$\{\delta_i^{\vee}\}$ the dual basis in the dual space.

Degeneration formula in numerical form expresses Gromov-Witten invariants of generic fiber $X$ in a family $p: \widetilde{X} \to \mbA^1$ with smooth total space, see Lemma (\ref{totalspace}),  with insertions in the image of  restriction map (\ref{restriction})$\otimes_\mbQ\mbC$
\begin{align}
    H^{\star}(\widetilde{X}, \mbC) \to H^{\star}(X, \mbC),
\end{align}

via relative Gromov-Witten invariants of pairs $(X_1,D),(\widetilde{X}_2,D)$ of smooth components $X_1,\widetilde{X}_2$ of a special fiber $\widetilde{X}_0:= p^{-1}(0)$, intersecting transversally along a smooth divisor $D$.

\subsection{Relative Gromov-Witten invariants}
Let $X$ be a smooth projective complex variety with $D\subset X$ a smooth divisor and denote by $N_{X/D}$ the normal line bundle over $D$. We recall the notion of a stable relative map $(C,q_1,\dots, q_k)\to (X, D)$ from prestable nodal curve $C$ with $k$ smooth marked points $q_i$ touching the divisor $D$ with multiplicities $\mu = (\mu_1,\dots, \mu_k)$. These points of $C$ are called relative points.

First, recall the notion of $l$-step expansion $(X[l], D[l])$ of $(X,D)$. Denote by 
\begin{align*}
    \mathbf{P}=\mathbb{P}(N_{X/D}\oplus \mathcal{O}_D)
\end{align*}
the $\mathbb{P}^1$-bundle over $D$. The variety $\mathbf{P}$ has two natural disjoint sections. Choice of a point $[1:0]$ in a fiber over each point in $D$ defines a section of $\mathbf{P}$ with normal bundle $N_{X/D}$, called zero section $D_0\simeq D$. Choice of a point $[0:1]$ defines a section with normal bundle $N_{X/D}^\vee$, called infinity section $D_\infty \simeq D$. Gluing $X$ and $\mathbf{P}$ along $D$ and $\mathbf{P}$  along $D_0$ gives a variety $X[1]$ with a divisor $D[1]=D_\infty$. Gluing $X[1]$ along $D[1]$ with $\mathbf{P}$ along $D_0$ gives variety $X[2]$ with a divisor $D[2]=D_\infty$. The $l$-\textit{step expanded
degeneration} of the pair $(X,D)$ is the pair $(X[l], D[l])$, obtained by gluing $X[l-1]$ along $D[l-1]$ and $\mathbf{P}$ along $D_0$. 

The notion of a family of expanded degenerations was defined in \cite{Li1} and its moduli stack $\mathcal{T}$ was defined together with a universal family $\mathcal{X}\to \mathcal{T}$. This universal family has the property that property that for every scheme $S$ and a morphism
$S\to \mathcal{T}$, the pull-back family $\mathcal{X}_S:=\mathcal{X} \times_\mathcal{T}S \to S$
is a flat and proper morphism with every geometric fiber isomorphic to $X[l]$ for
some $l$. The basic idea behind the definition of a (family of) stable relative map is to blow-up a new component in the target variety whenever a relative marked point tends to a node. This ensures the properness of a stack of stable relative maps, needed to further integrate over its virtual fundamental cycle. 

Fix a vector of positive integers $\mu = (\mu_1, \dots, \mu_k)$, $\mu_i\geq 1$.
A \textit{stable relative map} $(C,q_1,\dots, q_k) \to (X, D)$ from a prestable curve with $k$ relative points with multiplicities $\mu$ of the tangencies of $C$ along $D$ is a commutative diagram 
\[\begin{tikzcd}
	C & {\mathcal{X}} \\
	S & {\mathcal{T}}
	\arrow[from=1-1, to=1-2]
	\arrow[from=1-1, to=2-1]
	\arrow[from=1-2, to=2-2]
	\arrow[from=2-1, to=2-2]
\end{tikzcd}\]
where $C\to S$ is a flat family of prestable nodal curves with $k$ smooth distinct sections $q_1,\dots,q_k : S\to C$ such that 

i) Let $f:C \to \mathcal{X}_S=\mathcal{X}\times_\mathcal{T}S$ be the induced map over $S$. For each geometric fiber $f_s:(C_s, q_1(s),\dots, q_k(s))\to \mathcal{X}_s\simeq X[l]$ no irreducible component of $C_s$ is entirely mapped into
the singular locus $D_0 \coprod ...\coprod D_{l-1}$  of $X[l]$ or the divisor $D[l]\subset X[l]$(the rightmost copy of $D$). In addition, the
multiplicities at $f_s(q_k(s))$ along $D[l]$ is fixed to be $\mu_k$, 

ii)for each geometric point $s\in S$ the fiber $f_s$ is stable in the sense that
there are finitely many pairs $(r_1,r_2)$, where $r_1$ is an automorphism of $C_s$, sending $i$-th marked point to the $i$-th marked point,
$r_2$ is an automorphism of $X[l]$ fixing $X$, and $f_s\circ r_1 = r_2\circ f_s$, 

iii) For each point $s\in S$ and $P\in C_s$ such that $f_s(p)$ is contained in the
singular locus of $\mathcal{X}_{S,s}\simeq X[l]$ $f_s$ is \textit{predeformable} at $p$, that is, $p$ is a node
of $C_s$, and étale-locally on $C$, and smooth-locally on $\mathcal{X}_S$, the morphism $f_s$
admits the following form:
\begin{align*}
    \Spec~A[x,y]/(xy-t)\to \Spec ~A[u,v]/(uv-w)
\end{align*}
over $\Spec ~A$, for some algebra $A$ and $t, w\in A$, where $w=t^n, u\mapsto x^n, v\mapsto y^n$ for some $n \in \mbZ_{\geq 1}$.

A stable relative map $(C,p_1,\dots, p_n, q_1,\dots, q_k)\to (X,D)$ with $n$ additional (non-relative) smooth distinct marked points is defined similarly, the markings $p_i$ are allowed to lend only to the smooth locus of $X[l]\backslash D[l]$. 

\begin{example}
Fix a vector of positive integers $a=(a_1,\dots, a_k),a_i\geq 1$ with $\sum_{i=1}^k a_i = d$.
A point of a space 
$\overline{\mathcal{M}}^a_{0,k,d}(\mbP^1,\infty)$
is a map $[(X,q_1,\dots, q_k)\xrightarrow{f} (Y,\infty)]$, $(n=0)$, where \begin{enumerate}
    \item [$\cdot$] $X$ is a nodal curve of arithmetic genus 0; 
    \item [$\cdot$] $Y$ is a nodal curve of arithmetic genus 0 whose dual graph is a tree without branches;
    \item [$\cdot$] the point $\infty\in Y$ is smooth and lie on extreme right component;
    \item [$\cdot$] the preimage of each node in $Y$ is a union of nodes in $X$;
    \item [$\cdot$] the predeformability condition says the following: lifting $f$ to the map between normalisations at each node, the ramification indices at the two preimages of each node of $X$ agree;
    \item [$\cdot$] $f^{-1}(\infty) = \{q_1,\dots, q_k\}$ and the ramification index at $q_i$ is $a_i$.
\end{enumerate}
The virtual dimension in this case is $d+k-2$ and is the number of simple branch points away from $\infty$, by Riemann-Hurwitz formula. 
\end{example}

Moduli stack of stable relative maps was constructed in in \cite{Li1}. It is a proper Deligne-Mumford stack \cite{Li1} and its virtual fundamental cycle was constructed in \cite{Li2}. Denote its connected component by $\overline{\mathcal{M}}^\mu_{g, n + k, \beta}(X[l],D[l])$ parametrizing maps $$[(C,p_1,\dots, p_n, q_1,\dots, q_k)\to (X,D)]$$ from nodal genus $g$ curves, of curve class $\beta\in H_2(X,\mbZ)$, i.e. $H_2(C,\mbZ)\xrightarrow{f_*} H_2(X[l],\mbZ)\xrightarrow{p_*} H_2(X,\mbZ)$ sends $[C]$ to $\beta$, with $n+k$ smooth marked points, $k$ of which lend to $D[l]$, with a fixed vector $\mu = (\mu_1,\dots,\mu_k)$ of tangency conditions, and the rest $n$ points are mapped to $X[l]_{\text{smooth}}\backslash D[l]$. It is empty if $\sum \mu_i \neq \langle \beta, D\rangle$.

Our argument uses only its open substack 
\begin{align*}
  \mathcal{M}^\mu_{g, n + k, \beta}(X, D) \subset \overline{\mathcal{M}}^\mu_{g, n + k, \beta}(X, D) 
\end{align*}
of stable relative maps from smooth curves to $(X,D)$ itself, i.e. to $0$-step expanded degeneration. It has the same virtual dimension
\begin{align*}
    \virdim \overline{\mathcal{M}}^\mu_{g, n + k, \beta}(X, D) = 
    \virdim \mathcal{M}^\mu_{g, n + k, \beta}(X, D).
\end{align*}

We will denote the data $(g,n,k,\beta, \mu)$ by $G$.

Intersection of $D$ with the image of the curve with multiplicity $\mu_i$ is a codimension $\mu_i$ condition, so
\begin{align*}
    \virdim \mathcal{M}^\mu_{g, n + k, \beta}(X, D) = \virdim \mathcal{M}_{g, n + k, \beta}(X) - \sum_{i=1}^k \mu_i,
\end{align*}
where $\mathcal{M}_{g, n + k, \beta}(X)$ is the moduli stack of stable maps from smooth curves. Finaly, we get 
\begin{align}\label{virdim}
    \virdim \overline{\mathcal{M}}^\mu_{0, n + k, \beta}(X, D) = \dim_\mbC X -3 + \langle\beta,c_1(T_X)- D\rangle + n + k.
\end{align}

The relative Gromov-Witten invariant of $X$ with $n+k$ insertions $\gamma \in H^{\star}(X^n,\mbC)\otimes H^\star(D^k,\mbC)$ 
  will be denoted by

\begin{align*}
\langle\gamma \rangle_{G}^{(X, D)}:= \ev^*(\gamma)\cap\left[\overline{\mathcal{M}}^\mu_{g, n + k, \beta}(X, D)\right]^{\vir}, 
\end{align*}
where $\ev: \overline{\mathcal{M}}^\mu_{g, n + k, \beta}(X, D)\to X^n \times D^k$ sends a map $(C,p_1,\dots, p_n, q_1, \dots, q_k)\to (X[l],D[l])$ to $(f(p_1),\dots, f(p_n), f(q_1),\dots, f(q_k))$, where $f(p_i)\in$ $X[l]_{\text{smooth}}\backslash D[l]$, $f(q_i)\in D[l]\simeq D$.

\subsection{Proof of the main theorem}

Surjectivity of the restriction map (\ref{restriction}), allows us to apply the degeneration formula to compute the invariant (\ref{main_correlator}):

\begin{align}\label{deg_formula}
  \langle\tau_0(e_1)\dots\tau_0(e_{m+3})\rangle_{0,m+3,\frac{m}{2}}^X = 
  \sum_{\eta = (G_1, G_2)}\sum_{j_1, \cdots, j_{l(\eta)}}\#(\eta)\\
\langle\prod_{i=1}^{n_1}\tau_0(\widetilde{e_i}|_{X_1})\cap\prod_{k=1}^{l(\eta)}\delta_{j_k}\rangle_{G_1}^{(X_1, D)}
  \langle\prod_{i=1}^{n_2}\tau_0(\widetilde{e_i}|_{\widetilde{X}_2})\cap\prod_{k=1}^{l(\eta)}\delta^{\vee}_{j_k}\rangle_{G_2}^{(\widetilde{X}_2, D)}.
\end{align}

Now we explain the notations in the formula. 
\begin{enumerate}
    \item [$\cdot$] The first summation in the formula is over all the pairs $\eta = (G_1, G_2)$ of data $(0, n_1, l(\eta), \beta_1, \mu)$ and $(0, n_2, l(\eta), \beta_2, \mu)$ satisfying 
\begin{align*}
n_1 + n_2 = m+3 \hspace{1cm} n_1,n_2 \geq 0,\\
i_{*}\beta_1+ i_{*}\beta_2 = \frac{m}{2}, 
\end{align*}
where $i: X \to \widetilde{X}$ is an embedding of a generic fiber;
    \item [$\cdot$] the second summation is over $0\leq j_1, \dots, j_{l(\eta)}\leq \dim_\mbC H^{\star}(D, \mbC)$;  
    \item [$\cdot$]the number $\#(\eta)$ takes into account the automorphisms of a pair $\eta$, choice of a tangency vector $\mu$ and a choice of the indices $j_1,\dots,j_{l(\eta)}$ and we omit its definition;
    \item [$\cdot$] we denote by $\widetilde{e_i}|_{X_1}\in H^m(X_1,\mbC)$ and $\widetilde{e_i}|_{\widetilde{X}_2}\in H^m(\widetilde{X}_2,\mbC)$ the restrictions to $X_1$ and $\widetilde{X}_2$ of the preimages $\widetilde{e_i}$ of $e_i \in H^{m}_{\prim}(X, \mbZ)$ in $H^{m}(\widetilde{X}, \mbZ)$ 
     under the restriction map, discussed in Corollary (\ref{n1Argument}). From this Corollary one immediately sees the only class with non-zero restriction is $e_{m+2}$. 
\end{enumerate}

\begin{proof}[Proof of Theorem ~\ref{maintheorem}]

The invariant 
\begin{align*}
\langle\prod_{i=1}^{n_1}\tau_0(\widetilde{e_i}|_{X_1})\cap\prod_{k=1}^{l(\eta)}\delta_{j_k}\rangle_{G_1}^{(X_1, D)}
\end{align*}
can be non-zero only if
\begin{align}\label{nonzero_iff}
\virdim \overline{\mathcal{M}}^\mu_{0, n_1 + l(\eta), \beta}(X_1, D) = \dfrac{1}{2}(\sum_{i=1}^{l(\eta)} \deg \delta_i + n_1 m).
\end{align}
Using the formula (\ref{virdim}), we compute 
\begin{align}\label{virdim_X1}
    \virdim \overline{\mathcal{M}}^\mu_{0, n_1 + l(\eta), \beta_1}(X_1, D) = m -3 + (m+2-2)\beta_1 + n_1 + l(\eta) - \beta_1,
\end{align}
where we used the short exact sequence 
\begin{align*}
    0 \to T_{X_1} \to T_{\mbP^{m+1}}\vert_{X_1} \to \mathcal{O}(2)\vert _{X_1} \to 0
\end{align*}
to compute $c_1(T_{X_1})$, and the fact that 
\begin{align*}
    \sum_{j=1}^{l(\eta)}\mu_j = \langle\beta_1, D\rangle = \beta_1.
\end{align*}
Indeed, since
\begin{align*}
    H_2(X_1, \mbQ) \simeq \mbQ\langle c\rangle, 
\end{align*}
it is sufficient to show that $\langle c, D\rangle = 1$. By Poincaré duality
\begin{align*}
    H_2(X_1, \mbQ) \simeq H^{2m-2}(X_1, \mbQ), \hspace{1cm} c\mapsto \dfrac{1}{4}\omega_{X_1}^{m-1}.
\end{align*}
 Then we compute
\begin{align*}
    \langle c, D\rangle = \langle \dfrac{1}{4}\omega_{X_1}^{m-1}, 2\omega_{X_1}\rangle = \dfrac{1}{2}\int_{X_1}\omega_{X_1}^m = 1.
\end{align*}

Since the non-zero cohomology $H^i(D,\mbC)$ are only in even degrees, we replace 
$\frac{1}{2}\sum_{i=1}^{l(\eta)}\deg \delta_i$  in (\ref{nonzero_iff}) by $\sum_{i=1}^{l(\eta)}\deg \delta_i$ and assume that $\deg \delta_i \in \{1,\dots,m-1\}$. In particular, 
\begin{align*}
    \deg \delta_i \leq m-1, \hspace{1cm} i = 1,\dots,l(\eta).
\end{align*}

From the Corollary (\ref{n1Argument}) we notice that $n_1$ is either $1$ or $0$.

$\bullet $ If $n_1 = 0$ then from (\ref{nonzero_iff}) and (\ref{virdim_X1}) we obtain an inequality
\begin{align}\label{ineq1}
    m-3 + (m-1)\beta_1 + l(\eta) \leq l(\eta) (m-1).
\end{align}
On the other hand 
\begin{align}\label{ineq2}
    \beta_1 = \sum_{j=1}^{l(\eta)}\mu_j \geq l(\eta),
\end{align}
since each intersection of an image of the curve with $D$ is at least transversal.
From (\ref{ineq1}) and (\ref{ineq2}) we obtain a bound for possible curve classes $\beta_1$
\begin{align*}
    l(\eta) \leq \beta_1 \leq \frac{l(\eta)(m-2)+3-m}{m-1}
\end{align*}
which in turn gives a bound for the possible number $l(\eta)$ of markings lending to the divisor $D$ 
\begin{align}
    l(\eta)\leq 3-m,
\end{align}
meaning that for even $m$ at least 4 the invariant 
\begin{align*}
\langle\prod_{k=1}^{l(\eta)}\delta_{j_k}\rangle_{G_1}^{(X_1, D)}
\end{align*}
is zero.

$\bullet $ If $n_1 = 1$ then from (\ref{nonzero_iff}) and (\ref{virdim_X1}) we obtain 

an inequality
\begin{align*}
    m-3 + (m-1)\beta_1 + l(\eta) + 1 = \sum_{i=1}^{l(\eta)}\deg \delta_i +\frac{m}{2}\leq l(\eta) (m-1) + \frac{m}{2},
\end{align*}

the bound for the possible curve classes
\begin{align*}
    \l(\eta) \leq \beta_1 \leq \frac{l(\eta)(m-2)+2 - \frac{m}{2}}{m-1},
\end{align*}

and, finally, the bound for the possible number of markings lending to the divisor $D$
\begin{align}
   l(\eta) \leq 2 - \frac{m}{2}, 
\end{align}
meaning the invariants
\begin{align*}
 \langle\tau_0(\widetilde{e_i}|_{X_1})\cap\prod_{k=1}^{l(\eta)}\delta_{j_k}\rangle_{G_1}^{(X_1, D)}, \hspace{1cm} i=1,\dots,m+3
\end{align*}
are zero if $m$ is even and at least 4. 

The degeneration formula (\ref{deg_formula}) implies the vanishing 
\begin{align*}
\langle\tau_0(e_1)\dots\tau_0(e_{m+3})\rangle_{0,m+3,\frac{m}{2}}^X = 0
\end{align*}
for even dimensions $m$ at least 4. 

\end{proof}

The case $m=2$ corresponds to a degree  4 del Pezzo surface, a blow-up of $\mbP^2$ at 5 points in general positions(no three on a line).  Genus 0 Gromov-Witten theory of blow-up of $\mbP^2$ was studied in \cite{GP} where the WDVV equation was used to compute Gromov-Witten invariants recursively.

\end{document}